\theoremstyle{plain}
\newtheorem{thm}[equation]{Theorem}
\newtheorem{alg}[equation]{Algorithm}
\newtheorem*{thm*}{Theorem}
\newtheorem{lemma}[equation]{Lemma}
\newtheorem{prop}[equation]{Proposition}
\newtheorem{cor}[equation]{Corollary}
\newtheorem{remark}[equation]{Remark}
\newtheorem{definition}[equation]{Definition}
\newtheorem{example}[equation]{Example}
\numberwithin{equation}{section}
\def\fsl{\mathfrak{sl}}
\def\End{{\rm End}}
\def\ot{\otimes}
\newcommand{\CC}{\mathbb{C}}
\newcommand{\LL}{\mathsf{L}}
\newcommand{\M}{\mathsf{M}}
\newcommand{\Q}{\mathsf{Q}}
\newcommand{\R}{\mathsf{R}}
\newcommand{\smf}{\mathsf{s}}
\newcommand{\SF}{\mathsf{S}}
\newcommand{\T}{\mathsf{T}}
\newcommand{\VV}{\mathsf{V}}
\newcommand{\W}{\mathsf{W}}
\newcommand{\As}{\mathsf{Z}}
\newcommand{\fs}{\mathsf{f}}
\newcommand{\gs}{\mathsf{g}}
\newcommand{\Atwo}{\mathsf{A_2}}
\newcommand{\dimm}{\mathsf{dim}}
\newcommand{\Ann}{\mathsf{A_{r-1}}}
\newcommand{\ve}{\varepsilon}
\newcommand{\EM}{\sf{m}}
\newcommand{\grim}[3][]{%
\ifthenelse{\isempty{#1}}{
\vcenter{\hbox{\scalebox{#2}{\includegraphics{grimfiles/#3}}}}}
{\vcenter{\hbox{\scalebox{#2}[#1]{\includegraphics{grimfiles/#3}}}}}
}
\title{The Combinatorics of $\mathsf{A_2}$-webs}
\author{Georgia Benkart \\
\small {Department of Mathematics}\\
\small{University of Wisconsin-Madison}\\
\small{Madison, WI 53706, USA}\\
\small\tt{benkart@math.wisc.edu}\\
\and
Soojin Cho\thanks{This research was supported by Basic Science Research Program through the
National Research Foundation of Korea(NRF) funded by the Ministry of
Education(NRF2011-0012398).} \\
\small{Department of Mathematics} \\
\small{Ajou University} \\
\small{Suwon,  443-749, Korea (ROK)}\\
\small\tt{chosj@ajou.ac.kr}\\
\and
Dongho Moon\thanks{This research was supported by the Basic Science Research Program of the National Research Foundation of Korea (NRF) funded by the Ministry of Education, Science and Technology (2010-0022003).
The hospitality of the Mathematics Department at the University of Wisconsin-Madison while this research
was done  is gratefully
acknowledged.} \\
\small{Department of Mathematics}\\
\small{Sejong University}\\
\small{Seoul, 133-747, Korea (ROK)}\\
\small\tt{dhmoon@sejong.ac.kr}}
\begin{document}
\maketitle
\begin{abstract}
{The nonelliptic $\mathsf{A_2}$-webs with $k$ ``$+$''s
on the top boundary and $3n-2k$ ``$-$''s on the bottom
boundary combinatorially model the
space $\mathsf{Hom}_{\mathfrak{sl}_3}(\mathsf{V}^{\otimes (3n-2k)}, \mathsf{V}^{\otimes k})$
 of  $\mathfrak{sl}_3$-module maps on tensor powers of the natural $3$-dimensional $\mathfrak{sl}_3$-module
 $\mathsf{V}$,  and they have connections with the combinatorics of
Springer varieties.
Petersen, Pylyavskyy, and Rhodes showed  that the set of such  $\mathsf{A_2}$-webs and
the set of  semistandard tableaux of shape $(3^n)$
and type $\{1^2,\dots,k^2,k+1,\dots, 3n-k\}$ have the same cardinalities.
In this work, we  use the $\sf{m}$-diagrams introduced by Tymoczko and the Robinson-Schensted correspondence
to construct an explicit bijection, different from the one given by Russell,  between these two sets.  In establishing our result, we show that the pair of standard  tableaux constructed using the notion of path depth is
the same as the pair constructed  from  applying the Robinson-Schensted correspondence to a $3\,2\,1$-avoiding
permutation.  We also obtain a bijection between such  pairs of standard tableaux and Westbury's $\Atwo$ flow diagrams.}\end{abstract}

\section{Introduction} \label{sec:intro}

Combinatorial spiders can be used to describe the space of invariants of  tensor products of irreducible representations for Lie
algebras and their quantum analogues.  Kuperberg \cite{K}  gave a description of the spiders for
types   $\mathsf{A_1,A_2, B_2}$,  $\mathsf{G_2}$ using certain combinatorial graphs called {\it webs}.
In this paper, we will focus on webs of type $\mathsf{A_2}$.

An $\mathsf{A_2}$-web is a directed graph such that
(1) vertices on a boundary line have degree one;  (2) internal vertices are trivalent;  and  (3) each vertex is either
a sink (all its edges are directed inward) or a source (all its edges are directed outward).
A boundary vertex that is a sink
corresponds to the natural three-dimensional module $\mathsf{V}$ for $\mathfrak{sl}_3$ or  $\mathsf{U}_q(\mathfrak{sl}_3)$  and carries a  label  ``$+$'';
while a boundary vertex that is a source corresponds to the dual module $\mathsf{V}^*$ and is labeled ``$-$''.
An $\mathsf{A_2}$-web is said to be {\it nonelliptic} if each interior face is bounded by at least six edges.

 Let $\lambda = (\lambda_1 \geq \lambda_2 \geq \cdots \geq \lambda_\ell \geq 0) \vdash d$ be a partition of  the integer $d$.     The corresponding Young diagram is a left-justified array with $\lambda_1$ boxes in the first row, $\lambda_2$ boxes in the second row, and so forth.
A {\it standard tableau} of shape $\lambda$ is a filling of the Young diagram with $d$ distinct positive integers in such a way that the entries in the boxes increase left to right across each row and from top to bottom down each column.   Let   $n_1 < n_2 < \cdots < n_r$  be positive integers.  A  {\it semistandard tableau} of shape $\lambda$ and {\it type}  $\{{n_1}^{a_1}, {n_2}^{a_2}, \dots, {n_r}^{a_r}\}$ such that $\sum_{i=1}^r  a_i = d$  is a filling of
the boxes  of $\lambda$ with $a_1$ entries $n_1$, $a_2$ entries $n_2$ etc.,  so that the entries in the boxes weakly increase left to right
across the rows and strictly increase from top to bottom down each column.
Petersen-Pylyavskyy-Rhoades \cite{PPR} re-interpreted  Kuperberg's result \cite[Thm.~6.1]{K}  on the dimension of the space of invariants (i.e.~the number of  $\mathsf{A_2}$-webs)
in terms of semistandard tableaux of a particular shape and type.

\begin{thm}[\cite{PPR}] \label{thm:prp}
Let $\gamma$ be a fixed boundary with $k$ ``$+$"s and  $3n-2k$ ``$-$"s. The number of semistandard tableaux of shape $(3,3,\ldots)=(3^n)$ and type  $\{1^2,\ldots, k^2, k+1, \ldots, 3n-k\}$ is equal to the number of nonelliptic  $\mathsf{A_2}$-webs with boundary $\gamma$.
\end{thm}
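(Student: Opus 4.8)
The plan is to take as our starting point Kuperberg's theorem \cite[Thm.~6.1]{K}, which identifies the number of nonelliptic $\Atwo$-webs with boundary $\gamma$ with $\dim\mathsf{Hom}_{\fsl_3}(\VV^{\ot(3n-2k)},\VV^{\ot k})$ (in particular this count depends only on $k$ and $n$), and then to reduce the statement to a routine character computation. It will then be enough to prove
\[
\dim\mathsf{Hom}_{\fsl_3}\!\bigl(\VV^{\ot(3n-2k)},\VV^{\ot k}\bigr)\;=\;K_{(3^n),\,(2^{k}1^{3n-2k})},
\]
where the right-hand side is the Kostka number counting semistandard tableaux of shape $(3^n)$ having two $i$'s for each $1\le i\le k$ and one $j$ for each $k<j\le 3n-k$ --- which is exactly the count asserted in the theorem.

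First I would reinterpret the Hom-space as a space of invariants and move the duals around. There is a natural isomorphism $\mathsf{Hom}_{\fsl_3}(\VV^{\ot(3n-2k)},\VV^{\ot k})\cong\bigl((\VV^*)^{\ot(3n-2k)}\ot\VV^{\ot k}\bigr)^{\fsl_3}$, and the space of $\fsl_3$-invariants of a finite-dimensional module has the same dimension as that of its contragredient, so the dimension in question also equals $\dim\bigl(\VV^{\ot(3n-2k)}\ot(\VV^*)^{\ot k}\bigr)^{\fsl_3}$. Using the $\fsl_3$-module isomorphism $\VV^*\cong\wedge^2\VV$, I would rewrite this as $\dim\bigl(\VV^{\ot(3n-2k)}\ot(\wedge^2\VV)^{\ot k}\bigr)^{\fsl_3}$. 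Now $\VV^{\ot(3n-2k)}\ot(\wedge^2\VV)^{\ot k}$ is a polynomial $\mathrm{GL}_3$-module, homogeneous of degree $(3n-2k)+2k=3n$, with character $s_{(1)}^{\,3n-2k}s_{(1,1)}^{\,k}=e_1^{\,3n-2k}e_2^{\,k}$ in the variables $x_1,x_2,x_3$. Its space of $\mathrm{SL}_3$-invariants coincides with the $\det^{\,n}$-isotypic component, so its dimension is the multiplicity of the irreducible $\VV_{(n,n,n)}$ in that module, i.e.\ the coefficient of $s_{(n,n,n)}$ in $e_1^{\,3n-2k}e_2^{\,k}$.

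Finally, since $(n,n,n)$ has exactly three rows, that Schur coefficient is unaffected by lifting from three variables to the full ring of symmetric functions, so it equals $\langle e_1^{\,3n-2k}e_2^{\,k},\,s_{(n^3)}\rangle$; expanding $e_\mu=\sum_\nu K_{\nu'\mu}\,s_\nu$ with $\mu=(2^{k}1^{3n-2k})$ and using $(n^3)'=(3^n)$ gives the value $K_{(3^n),\,(2^{k}1^{3n-2k})}$, which is the desired tableau count. The only step I would be careful about is the bookkeeping in the second paragraph: one must pass to the contragredient --- equivalently, interchange the roles of $\VV$ and $\VV^*$ --- \emph{before} replacing $\VV^*$ by $\wedge^2\VV$, so that the total degree comes out to $3n$ and the controlling rectangle is $(n^3)$; the other choice produces instead the shape $(3^{2n-k})$ with content $(2^{3n-2k}1^{k})$, and one then has to invoke the identity $K_{(3^n),(2^{k}1^{3n-2k})}=K_{(3^{2n-k}),(2^{3n-2k}1^{k})}$, which is true but is just another shadow of the same duality. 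Apart from this there is no real obstacle; the substantive content of the statement --- that the nonelliptic webs actually form a basis of the invariant space --- is imported directly from \cite{K}.
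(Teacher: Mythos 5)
Your argument is correct: Kuperberg's theorem reduces the web count to $\dim\mathsf{Inv}(\VV_\gamma)$, the passage to the contragredient and the identification $\VV^*\cong\wedge^2\VV$ are both valid for $\fsl_3$, the resulting module is polynomial of degree $3n$ with character $e_1^{\,3n-2k}e_2^{\,k}$, the $\fsl_3$-invariants are exactly the $(n,n,n)$-isotypic part, and $\langle e_\mu, s_{(n^3)}\rangle=K_{(3^n),\mu}$ with $\mu=(2^k1^{3n-2k})$ gives precisely the stated tableau count; your caution about performing the dualization before substituting $\wedge^2\VV$ is also the right bookkeeping. However, be aware that the paper does not prove this theorem at all: it is quoted from \cite{PPR}, and your computation is essentially the original derivation --- a reinterpretation of Kuperberg's dimension formula \cite[Thm.~6.1]{K} as a Kostka number. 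What the paper actually does (Sections~\ref{sec:bijection}--\ref{sec:inverse}) is construct an explicit bijection $\Phi$ between the two sets (Theorem~\ref{thm:main}), built from $\EM$-diagrams, path depth, and the Robinson--Schensted correspondence; once $\Phi$ is shown to be well defined, nonelliptic-valued, and injective, Theorem~\ref{thm:prp} follows bijectively. The trade-off is the usual one: your character-theoretic route is short and yields only an equality of cardinalities (and, notably, the paper itself leans on exactly this equality in Section~\ref{sec:inverse}, where injectivity of $\Phi$ plus the known equicardinality is used to conclude surjectivity), whereas the paper's construction is far more laborious but produces an explicit correspondence carrying combinatorial information --- depth statistics, $3\,2\,1$-avoiding permutations, flow diagrams --- that a dimension count cannot see.
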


In \cite{T}, Tymoczko gave a bijective proof of Theorem~\ref{thm:prp} for the case  $k=0$ by exhibiting an explicit bijection between the set of standard tableaux of shape $(3^n)$ and the set of nonelliptic  $\mathsf{A_2}$-webs which have  $3n$  ``$-$"s on the boundary.  Her approach introduces the notion of
an {\it $\EM$-diagram}.   An $\EM$-diagram on $n$ vertices consists of a boundary line labeled with the numbers $1,2,\ldots,n$, together with a collection of arcs drawn above it.   For a given standard tableau $\T$ of shape $(3^n)$, Tymoczko associates an $\EM$-diagram to $\T$ by the following algorithm:
\begin{alg}\label{alg:tym} \smallskip

\begin{enumerate}
\item[{\rm (1)}]  Take the conjugate of $\T$ (transpose the rows and columns) to obtain a standard tableau $\T^{\tt t}$ of shape $(n,n,n)$.
\item[{\rm (2)}]  Starting with the bottom row of $\T^{\tt t}$,  read  the entries of $\T^{\tt t}$ from left to right
across the  rows.  If $\ell$ is an entry in this reading,  connect  vertex $\ell$ to vertex $j$ on the boundary line of the $\EM$-diagram  with an arc,  where $j$ is the largest number less than $\ell$ in the row immediately  above the row with $\ell$ that has not
 already been connected with an arc.  After finishing with the bottom row,  move to the entries in the second row of $\T^{\tt t}$ and repeat the same procedure.
 \end{enumerate} \end{alg}

\begin{example}\label{exam:first}
If  $\T =\grim{0.8}{st1.pdf}\,$, then  $\T^{\tt t} =\grim{0.8}{st2.pdf}\,$.
Starting with  $5$ in the bottom row, connect it to $4$,  and then connect  $6$ to $3$.  Moving to the second row,
begin with $3$ and connect it to $2$, since $2$ is the largest entry less than $3$ in the first row.
Finally,  connect $4$ and  $1$. The resulting $\EM$-diagram associated to $\T$ is  $$
\grim{0.7}{arc.pdf}.
$$

\end{example}

The next step of Tymoczko's procedure is to
replace vertices in the $\EM$-diagram with certain configurations as pictured below.
$$
 \grim{0.55}{biarc1.pdf} \longrightarrow \grim{0.55}{biarc2.pdf}, \ \quad \grim{0.55}{biarc11.pdf} \longrightarrow \grim{0.55}{biarc22.pdf},
\ \quad   \grim{0.55}{biarc3n.pdf}  \longrightarrow
 \grim{0.55}{biarc4n.pdf}.
$$
This is called \emph{trivalizing} in \cite{T}, because vertices off the boundary  become \emph{trivalent} by this process.
For Example~\ref{exam:first} above,  the corresponding $\mathsf{A_2}$-web is
$$
\grim{0.6}{web.pdf}.
$$

In \cite{Ru}, Russell exhibited a bijection between the set of semistandard tableaux  of shape
$(3,3,\ldots)=(3^n)$ and type  $\{1^2,\ldots, k^2, k+1, \ldots, 3n-k\}$ and the nonelliptic
$\mathsf{A_2}$-webs  with one boundary line having $k$ ``$-$'' vertices  and $ 3n-2k$  ``$+$''
vertices for arbitrary $k$ such that $2k \leq  3n$.   In this paper, we demonstrate an explicit
bijection between the set of such semistandard tableaux and the nonelliptic $\Atwo$-webs having
$k$  ``$+$'' vertices on the top boundary line and $3n-2k$ ``$-$'' vertices on the bottom boundary
line. Such webs combinatorially model the space
$\mathsf{Hom}_{\mathfrak{sl}_3}(\VV^{\ot (3n-2k)},  \VV^{\ot k})$.
Several noteworthy special cases are when  $k=0$, which corresponds to the
space $\mathsf{Hom}_{\mathfrak{sl}_3}(\VV^{\ot 3n},  \mathbb C) = \mathsf{Inv}(\VV^{\ot 3n})$  of
$\mathfrak{sl}_3$-invariants in $\VV^{\ot 3n}$, and when  $n=k$, which corresponds to the centralizer algebra
$\End_{\mathfrak{sl}_3}(\VV^{\ot k})$ of the $\mathfrak{sl}_3$-action on the tensor space $\VV^{\ot k}$.

Our approach uses classical results from the representation theory of the symmetric group $\SF_\ell$ such as the well-known Robinson-Schensted correspondence (RS correspondence).
That the  representation theory of   $\SF_\ell$ plays an essential role is no surprise,  as  $\SF_\ell$ is closely related with the representation theory and invariant theory of the general linear and special linear groups via Schur-Weyl duality.
Another example of  the connection with the RS correspondence  can be found  in \cite{HRT}.

The RS correspondence (see for example, \cite[Sec.~3.1]{S} or \cite{W})  is a bijection between  permutations and ordered pairs of standard tableaux of the same shape,  consisting of an insertion tableau and a recording tableau.   Here,  we reverse the order of the standard tableaux in  the pair, so that  a permutation  $\sigma$ corresponds to $(\R,\Q)$,  where $\R$ is the recording tableau and $\Q$ is the insertion tableau.
This switch is simply a notational convenience for when we show in what follows that  the pair of standard tableaux constructed using the notion of path depth is the same as the pair $(\R,\Q)$.

For example, suppose $\sigma=\begin{pmatrix} 1 &2 &3&4&5 \\ 4&2&1&5&3 \end{pmatrix}$.
The tableau $\Q$ is created by inserting the numbers $ 4\,2\,1\,5\,3$ in succession,  and $\R$ is created by recording where the new box  has been added to $\Q$ at each step.  In the insertion process, a number $x$ is adjoined  to the right-hand end of first row of $\Q$ unless that violates the ``standard'' property. When a violation occurs,   the smallest $y$ in the row such that $x < y$ is bumped to the next row,  and $x$ is placed in the position vacated by $y$.  The insertion-bumping process then continues with  $y$ until a number is adjoined to the end of a row in the tableau.    For $\sigma$,  the steps to construct $(\R,\Q)$ are as follows:
$$
\left(\grim{0.8}{tabp1.pdf} \, ,\, \grim{0.8}{tabq1.pdf} \right) \longrightarrow
\left(\grim{0.8}{tabp2.pdf}\, , \, \grim{0.8}{tabq2.pdf} \right) \longrightarrow
\left(\grim{0.8}{tabp3.pdf}\, , \, \grim{0.8}{tabq3.pdf} \right) \longrightarrow
\left(\grim{0.8}{tabp4.pdf}\, , \, \grim{0.8}{tabq4.pdf} \right) \longrightarrow
\left(\grim{0.8}{tabp5.pdf}\, , \, \grim{0.8}{tabq5.pdf} \right)
$$

In Section~\ref{sec:central}, we review basic definitions  related to  $\Atwo$-webs and their connections with centralizer algebras of the $\mathfrak{sl}_3$-action on tensor products.  In Section~\ref{sec:bijection}, we construct a map $\Phi$ from semistandard tableaux to $\Atwo$-webs,  and then prove in Section~\ref{sec:proof} that the webs are nonelliptic. The results on  $3\,2\,1$-avoiding permutations in Section~\ref{sec:combinatorics} are used in Section \ref{sec:inverse}  to show  that $\Phi$  is indeed a bijection.   In Section~\ref{sec:flow}, we illustrate how  Westbury's flow diagrams  provide  a different bijection, which
avoids using  the RS correspondence.   We obtained this alternate bijection after submitting the first version of this
paper, and thank Heather Russell for bringing \cite{We} to our attention.

We note that the result on shuffle and join obtained in  \cite[Thm.~3]{Ru}   as an application of Russell's bijection  can be proved using our bijection also, although we do not include that proof here.

\section{$\Atwo$-webs and centralizer algebras} \label{sec:central}

The $\Atwo$-spider category  introduced by Kuperberg in \cite{K}  describes the invariants of tensor products of copies of  the natural 3-dimensional representation $\VV_+ = \VV$ of the complex Lie algebra $\mathfrak{sl}_3$ and its dual representation $\VV_- = \VV^*$.   A tensor product of  copies of $\VV_+$ and $\VV_-$  corresponds to a string
$\smf = \smf_1\cdots \smf_n$   of symbols $+$ and $-$, and we write $\VV_{\smf} = \VV_{\smf_1}\ot \cdots \ot\VV_{\smf_n}$.
 Let $\mathsf{Inv}(\VV_{\smf}) = \{ w \in \VV_{\smf} \mid  x.w = 0$ for all $x
\in \mathfrak{sl}_3\}$ denote the space of $\mathfrak{sl}_3$-invariants in $\VV_{\smf}$.

The objects in the $\Atwo$-spider category are finite strings of $+$ and $-$ signs, including
the empty string. The morphisms of the $\Atwo$-spider category are $\Atwo$-webs.
An $\Atwo$-web is defined  combinatorially as
a directed graph with a boundary labeled by the sign string  $\smf = \smf_1\cdots \smf_n$
so that
\begin{enumerate}
\item vertices on the boundary have degree one and are labeled by the components of
$\smf$ in a counterclockwise fashion;
\item internal vertices are trivalent;  and
\item  each vertex is either
a sink (all its edges are directed inward) or a source (all its edges are directed outward).
\end{enumerate}    Such an $\mathsf{A_2}$-web is said to be {\it nonelliptic} if each interior face is bounded by at least six edges. The following is an example of nonelliptic $\Atwo$-web with  boundary $\smf =---+-+-$:
$$
\grim{0.7}{webexam.pdf}
$$
No distinction is made between two diagrams that are the same up to rotation.

Let  $\mathsf{B}(\smf)$ denote the set of nonelliptic $\Atwo$-webs with  boundary condition $\smf$. The vector space of formal linear combinations of $\mathsf{B}(\smf)$  is the $\Atwo$-web space $\W(\smf)$.

\begin{thm}\label{T:inv}{\rm (\cite[Thm.~6.1]{K})}
Assume $\smf$ is a sign string.  Then the vector spaces $\W(\smf)$ and $\mathsf{Inv}(\VV_{\smf})$ have
 the same dimensions.
\end{thm}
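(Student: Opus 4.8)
The plan is to realize the $\Atwo$-web space as a concrete space of $\fsl_3$-invariant tensors and then show this realization is a bijection. First I would build a monoidal functor $F$ from Kuperberg's $\Atwo$-spider category to the category of finite-dimensional $\fsl_3$-modules: it sends the one-vertex object $+$ to $\VV$, the object $-$ to $\VV^*$, a sign string $\smf=\smf_1\cdots\smf_n$ to $\VV_\smf$, and each elementary web to a distinguished $\fsl_3$-map. Concretely, a trivalent sink goes to the invariant trilinear form $\VV\ot\VV\ot\VV\to\CC$ coming from $\Lambda^3\VV^*\cong\CC$, a trivalent source to its dual $\CC\to(\VV^*)^{\ot 3}$, and a cup and a cap to the coevaluation $\CC\to\VV\ot\VV^*$ and the evaluation $\VV^*\ot\VV\to\CC$. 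The first thing to check is that $F$ is well defined, i.e.\ that the defining relations of the $\Atwo$-spider — removal of a closed loop (the scalar $3=\dim\VV$), reduction of a bigon (the scalar $2$), and the square relation rewriting a square face as a sum of two webs — hold among the corresponding $\fsl_3$-maps. This is a finite, explicit verification taking place inside $\VV^{\ot\le 4}$ and uses only $\dim\VV=3$ together with the contraction identities for the form $\Lambda^3\VV^*\cong\CC$. Since a web with boundary $\smf$ is a morphism $\emptyset\to\smf$, it is sent to an element of $\mathsf{Hom}_{\fsl_3}(\CC,\VV_\smf)=\mathsf{Inv}(\VV_\smf)$, so $F$ induces a linear map $\Psi\colon\W(\smf)\to\mathsf{Inv}(\VV_\smf)$, and the theorem reduces to showing $\Psi$ is an isomorphism.

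For surjectivity I would invoke the first fundamental theorem of invariant theory for $\mathrm{SL}_3$ (equivalently, for $\fsl_3$): every $\fsl_3$-invariant in $\VV_\smf$ is a linear combination of complete contractions built from copies of the basic invariants $\Lambda^3\VV^*\cong\CC$, $\Lambda^3\VV\cong\CC$, and the canonical pairing $\VV\ot\VV^*\to\CC$. Each such contraction is, up to planar isotopy, the image under $\Psi$ of some $\Atwo$-web, not necessarily nonelliptic. The spider reduction relations — which express any web containing a bigon or an internal square in terms of webs with strictly fewer internal faces — then let us rewrite every web, modulo the kernel of $\Psi$, as a linear combination of nonelliptic webs. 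Hence $\mathsf{B}(\smf)$ spans $\mathsf{Inv}(\VV_\smf)$, so $\Psi$ is onto and $\dim\mathsf{Inv}(\VV_\smf)\le|\mathsf{B}(\smf)|=\dim\W(\smf)$.

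The hard part is the reverse inequality, namely the linear independence of the nonelliptic webs inside $\mathsf{Inv}(\VV_\smf)$. Here I would follow Kuperberg's inductive argument: order $\mathsf{B}(\smf)$ by, say, the number of internal vertices, and for each nonelliptic web $w$ produce a ``dual web'' (reflect $w$ and cap off the boundary, or evaluate on a carefully chosen tuple of basis vectors of $\VV$) whose pairing with $\Psi(w)$ is nonzero while its pairing with every web earlier in the order vanishes; this makes the resulting Gram-type matrix unitriangular and forces independence. A more economical route, available here because Theorem~\ref{thm:prp} is at our disposal, is to avoid independence altogether: a classical Schur--Weyl computation decomposes $\VV^{\ot(3n-2k)}\ot(\VV^*)^{\ot k}$ into $\fsl_3$-irreducibles and identifies $\dim\mathsf{Inv}(\VV_\smf)$ with the number of semistandard tableaux of shape $(3^n)$ and type $\{1^2,\dots,k^2,k+1,\dots,3n-k\}$ (only the multiset of signs of $\smf$ enters the dimension, consistent with webs being taken up to rotation). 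By Theorem~\ref{thm:prp} this number equals $|\mathsf{B}(\smf)|=\dim\W(\smf)$, and combined with the spanning statement this forces $\Psi$ to be bijective. Either way, the main obstacle is controlling the lower bound on $\dim\mathsf{Inv}(\VV_\smf)$, i.e.\ the linear independence; the functoriality and surjectivity are bookkeeping once the finitely many elementary relations have been verified.
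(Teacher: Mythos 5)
The paper does not prove this statement at all: it is quoted verbatim from Kuperberg \cite[Thm.~6.1]{K}, so there is no internal proof to compare against. Judged on its own terms, your first route (a monoidal functor $F$ to $\fsl_3$-modules built from the trilinear forms on $\Lambda^3\VV$, $\Lambda^3\VV^*$ and the evaluation/coevaluation, verification of the loop, bigon and square relations, and spanning of $\mathsf{Inv}(\VV_\smf)$ by webs via the first fundamental theorem plus the reduction relations) is the standard and correct skeleton, and it does deliver the inequality $\dim\mathsf{Inv}(\VV_\smf)\le|\mathsf{B}(\smf)|$. The problem is the reverse inequality, which you correctly identify as the crux but do not actually supply. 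The ``dual web with unitriangular pairing'' device is named rather than constructed: you give no ordering on $\mathsf{B}(\smf)$ and no recipe for the dual objects with the required vanishing, and producing these is precisely the hard content. Kuperberg's own argument for Theorem~6.1 does not go this way; he counts nonelliptic webs directly (via a growth rule matching them with dominant lattice paths) and compares with the tensor-product decomposition of $\VV_\smf$, so that spanning plus equality of counts forces independence.

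Your ``more economical route'' has a genuine logical flaw in the context of this paper: Theorem~\ref{thm:prp} is the Petersen--Pylyavskyy--Rhoades \emph{re-interpretation} of Kuperberg's Theorem~6.1, i.e.\ its proof in \cite{PPR} consists of computing $\dim\mathsf{Inv}(\VV_\smf)$ as a count of semistandard tableaux and then invoking the very statement you are trying to prove to identify that number with $|\mathsf{B}(\smf)|$. (The present paper's bijection does not rescue you either: Section~6 explicitly uses the cardinality equality from \cite{K} and \cite{PPR} to reduce bijectivity of $\Phi$ to injectivity.) So deducing Theorem~\ref{T:inv} from Theorem~\ref{thm:prp} is circular. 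To repair the argument you should replace that step by an independent count of $\mathsf{B}(\smf)$ --- for instance Kuperberg's path-growth bijection --- matched against the Schur--Weyl/Littlewood--Richardson computation of $\dim\mathsf{Inv}(\VV_\smf)$; with that in place the rest of your outline goes through.
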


Consider the special case  that  $\smf$ consists of $k$ minuses followed by $k$ pluses, so that
$\VV_{\smf} = (\VV^*)^{\ot k} \ot  \VV^{\ot k} \cong  \big (\VV^{\ot k} \big)^{\ast} \ot  \VV^{\ot k}
\cong \mathsf{End}(\VV^{\ot k})$.   In this situation,    $\mathsf{Inv}(\VV_{\smf}) \cong \mathsf{Inv}\big(\mathsf{End}(\VV^{\ot k})\big) = \mathsf{End}_{\mathfrak{sl}_3}(\VV^{\ot k}) = \{ \phi \in \End(\VV^{\ot k})
\mid   x \phi = \phi x \ \hbox{\rm for all} \ x \in \mathfrak{sl}_3\}$,  since  the $\mathfrak{sl}_3$-action on  $\End(\VV^{\ot k})$
is given by $(x.\phi)(w) = x.\phi(w) - \phi(x.w)$.

The above theorem implies that  we may label a basis of  the \emph{centralizer algebra}  $\As_k = \mathsf{End}_{\mathfrak{sl}_3}(\VV^{\ot k})$ by nonelliptic
$\Atwo$-webs that have  a bottom row of $k$ minus signs and a top row of $k$ plus signs.
For example, $\; \grim{0.8}{webexample.pdf}\;$ is an element in $\As_5$.

Multiplication of $\Atwo$-webs is governed by the following rules:
\begin{center}
{(M1)} \; $\grim{0.7}{circrule.pdf}= 3$ \hfill 
{(M2)} \; $\grim{0.7}{bigonrule.pdf}=2 \; \grim{0.7}{linerule.pdf}$  \hfill
{(M3)} \; $\grim{0.7}{fourgonrule.pdf}=\grim{0.7}{fourgonpass1.pdf}+
\grim{0.7}{fourgonpass2.pdf}$.
\end{center}
For example,
$$
\left(\;
\grim[0.8]{0.7}{g1.pdf}
\; \right)^2 \; = \;
\grim[0.8]{0.7}{mul1.pdf}
\; \overset{\text{(M2)}}{=}\;
2\;
\grim[0.8]{0.7}{mul2.pdf}
\; \overset{\text{(M1)}}{=}\; 6\;
\grim[0.8]{0.7}{g1.pdf}
\; .
$$

\medskip

The product of $\; \grim[0.8]{0.7}{mulexam1.pdf}\;$ and
$\; \grim[0.8]{0.7}{f1.pdf}\;$  is computed by placing the first
graph above the second and concatenating the diagrams using the rules above:
$$
\grim[0.8]{0.7}{mulexam2.pdf}\;   \; \overset{\text{(M3)}}{=} \;
\grim[0.8]{0.7}{mulexam3.pdf}
\; \; + \; \;
\grim[0.8]{0.7}{mulexam4.pdf} \; .
$$
The thick edge indicates the portion of the web to which the rule is applied.

 The symmetric group $\mathsf{S}_k$ acts on $\VV^{\ot k}$ by permuting the tensor factors, and this action
 commutes with the action of $\mathfrak{sl}_3$.    Classical Schur-Weyl duality says  that there is
 an epimorphism $\CC \mathsf{S}_k \rightarrow \mathsf{End}_{\mathfrak{sl}_3}(\VV^{\ot k}) = \As_k$
 and that the kernel of this map is the ideal generated by the Young symmetrizer
 $\sum_{\sigma} (-1)^{|\sigma |} \sigma$, where  the sum is over  the permutations $\sigma$ of
 $\{1,2,3,4\}$.

The transpositions   $s_i = (i \:i+1)$  for $i=1,\dots, k-1$ generate $\mathsf{S}_k$.    We let  $\fs_i$ for $1 \leq i < k$,  denote the image of $\mathsf{id}-s_i$ in $\As_k$.
 Then,  the elements $\fs_i, \ 1 \leq i < k$, generate $\As_k$,  and they satisfy the following relations:
\begin{itemize}
\item[{(R1)}]   $\fs_i^2  = 2  \fs_i$;
\item[{(R2)}]   $\fs_i \fs_j  = \fs_j \fs_i$  \qquad  if  \ \ $| i - j| > 1$;
\item[{(R3)}]   $\fs_i \fs_{i+1} \fs_i - \fs_i  =  \fs_{i+1} \fs_i \fs_{i+1} - \fs_{i+1}$  \quad for all \  $i=1,\dots, k-2$;
\item[{(R4)}]  $\gs_i \gs_{i+1} \gs_i - 4 \gs_i = 0$, \
where \ $\gs_i = \fs_i \fs_{i+1} \fs_i - \fs_i  =   \fs_{i+1} \fs_i \fs_{i+1} - \fs_{i+1}$ \quad for all \  $i=1,\dots, k-3$.
\end{itemize}
Relation  (R4) comes from the fact that if $f_i = \mathsf{id}-s_i$ and $g_i = f_i f_{i+1}f_i - f_i$  in $\CC \SF_k$,  then    $g_i g_{i+1}g_i - 4g_i = 8 \sum_{\tau} (-1)^{|\tau |} \tau
= 8\psi\left(\sum_{\sigma} (-1)^{|\sigma |} \sigma\right)\psi^{-1}$,  as $\tau$ ranges over the permutations
of ${i,i+1,i+2, i+3}$,  and $\sigma$ ranges over the permutations
of $\{1,2,3,4\}$.   Here $\psi$ is the element of $\SF_k$ that interchanges $\ell$ and $\ell+i-1$ for
$\ell=1,2,3,4$ and acts as the identity on the other  values.
The following relations can be deduced from (R1)-(R4):
\begin{itemize}
\item[{(Ra)}]   $\fs_i \gs_j  = \gs_j \fs_i $  \qquad  for all \ $i=1,\dots,k-1$,  \ $j = 1, \dots, k-2$;
\item[{(Rb)}]     $\gs_j^2  = 6  \gs_j$;
\item[{(Rc)}]     $\gs_j \gs_{j+2} \gs_j  = -2 \gs_j,   \qquad   \gs_{j+2} \gs_j \gs_{j+2} = -2  \gs_{j+2}$;
\item[{(Rd)}]   $\fs_i \gs_i  =  -2 \gs_i   =  \fs_{i+1}\gs_i$.
\end{itemize}

Each of the generators $\fs_i$  can be identified with  a nonelliptic $\Atwo$-web as shown below,
and one can verify that the multiplication above
corresponds to web multiplication.
\begin{align*}  \fs_i  \ \ \leftrightarrow&\ \ \grim{0.8}{generator1.pdf}      \\
\gs_i \ \  \leftrightarrow&\ \ \grim{0.8}{gf1.pdf}. \end{align*}
The following figure illustrates $\fs_1 \fs_{2} \fs_1 = \gs_1 +\fs_1$ (equivalently, $\gs_1 = \fs_1 \fs_2\fs_1 - \fs_1$):
$$
\grim{0.7}{f1f2f1.pdf} \;  \overset{\text{(M3)}}{=}  \; \grim{0.7}{g1.pdf} \; +\; \grim{0.7}{f1l.pdf}  \;.
$$

\section{A map between semistandard tableaux and \break nonelliptic $\Atwo$-webs} \label{sec:bijection}

\subsection{A bijection between semistandard tableaux and standard tableaux}
Let   $\nu =(\nu_1\geq \nu_2 \geq  \ldots \geq \nu_k) \subseteq (n,n,\ldots)=(n^k)$ be a partition which
sits inside the partition $(n^k)$  having $k$ rows of $n$ boxes each.
The  \emph{complementary partition} $\nu^c = (n-\nu_k \geq  n-\nu_{k-1}\geq \dots \geq  n-\nu_1)$ also fits inside $(n^k)$ in the lower right-hand corner by rotating $\nu^c$  by $180$ degrees.

\begin{thm} \label{thm:cute}
There is a bijection $\varphi$ between the set of semistandard tableaux of shape $\nu \subseteq (n^k)$ and type  $\{1^{n-1},\ldots, k^{n-1}\}$  and the set of standard tableaux with entries $\{1,2,\dots, k\}$ and shape $(\nu^c)^{\tt t}$.
\end{thm}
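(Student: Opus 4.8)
The plan is to factor the bijection $\varphi$ through chains of partitions together with a complementation inside nested rectangles. First I would invoke the standard dictionary between semistandard tableaux and chains of shapes: a semistandard tableau $S$ of shape $\nu$ and type $\{1^{n-1},\dots,k^{n-1}\}$ is the same data as a chain $\emptyset = \lambda^{(0)} \subseteq \lambda^{(1)} \subseteq \cdots \subseteq \lambda^{(k)} = \nu$ in which $\lambda^{(i)}$ is the subshape occupied by the entries $\le i$ and each skew shape $\lambda^{(i)}/\lambda^{(i-1)}$ (the cells containing $i$) is a horizontal strip with exactly $n-1$ cells. Since $S$ is column-strict and only uses values $\le i$ inside $\lambda^{(i)}$, each column of $\lambda^{(i)}$ has at most $i$ cells, so $\lambda^{(i)}$ has at most $i$ rows; and $\lambda^{(i)} \subseteq \nu \subseteq (n^k)$ gives it at most $n$ columns. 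Hence, after padding with zero rows to length exactly $i$, $\lambda^{(i)}$ fits inside the $i \times n$ rectangle. (Equivalently, such chains are the Gelfand--Tsetlin patterns with top row $\nu$ and constant weight $((n-1)^k)$.)

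Next I would complement inside these rectangles: set $\mu^{(i)} = \big(n-\lambda^{(i)}_i,\, n-\lambda^{(i)}_{i-1},\, \dots,\, n-\lambda^{(i)}_1\big)$, the $180^{\circ}$ rotation of the complement of $\lambda^{(i)}$ in the $i \times n$ box, so that $\mu^{(k)} = \nu^c$ and $|\mu^{(i)}| = in - i(n-1) = i$. The one genuine computation is to verify that the map $x \mapsto n-x$ (with reversal of rows) carries the interlacing inequalities $\lambda^{(i)}_j \ge \lambda^{(i-1)}_j \ge \lambda^{(i)}_{j+1}$ expressing ``$\lambda^{(i)}/\lambda^{(i-1)}$ is a horizontal strip'' into the corresponding interlacing inequalities for the pair $\mu^{(i-1)} \subseteq \mu^{(i)}$; this is a short index shift using only $\lambda^{(i)}_1 \le n$. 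Since $|\mu^{(i)}| - |\mu^{(i-1)}| = 1$, the strip $\mu^{(i)}/\mu^{(i-1)}$ is then a single cell, so $\emptyset = \mu^{(0)} \subset \mu^{(1)} \subset \cdots \subset \mu^{(k)} = \nu^c$ grows one cell at a time. That is precisely a standard tableau of shape $\nu^c$ with entries $\{1,\dots,k\}$ (place $i$ in the cell of $\mu^{(i)}/\mu^{(i-1)}$), and transposing it yields a standard tableau of shape $(\nu^c)^{\tt t}$; I would take this composite to be $\varphi$.

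For bijectivity I would run the construction in reverse. A standard tableau of shape $(\nu^c)^{\tt t}$ transposes to one of shape $\nu^c$, equivalently a chain $\emptyset = \mu^{(0)} \subset \cdots \subset \mu^{(k)} = \nu^c$ adding one cell at a time, so $|\mu^{(i)}| = i$; in particular $\mu^{(i)}$ has at most $i$ rows and (being contained in $\nu^c \subseteq (n^k)$) at most $n$ columns, so one may complement it inside the $i \times n$ box to obtain $\lambda^{(i)}$ with $\lambda^{(k)} = \nu$. The interlacing computation of the previous paragraph is symmetric in $\lambda$ and $\mu$, so each $\lambda^{(i)}/\lambda^{(i-1)}$ is a horizontal strip, necessarily of size $i(n-1)-(i-1)(n-1) = n-1$, and the chain $(\lambda^{(i)})$ reassembles into a semistandard tableau of shape $\nu$ and the prescribed type; the two maps are visibly mutually inverse. (If $|\nu| \ne k(n-1)$ both sides are empty, since then $|\nu^c| \ne k$, so one may assume $|\nu^c| = k$ throughout.) I expect the only real obstacle to be the bookkeeping in the middle step, where the number of rows jumps from $i-1$ to $i$ between consecutive terms of each chain; the zero-padding convention and the explicit check of the interlacing inequalities under $x \mapsto n-x$ dispose of it, and everything else is routine repackaging.
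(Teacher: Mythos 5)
Your proof is correct, and it arrives at the same bijection as the paper's but by a genuinely different route. The paper's $\varphi$ is described in one shot: embed $\T$ in the $k\times n$ rectangle, fill the unused boxes of each column with the missing values of $\{1,\dots,k\}$ in ascending order from bottom to top, rotate by $180$ degrees, and conjugate; bijectivity is then simply asserted (``the map $\varphi$ is clearly a bijection''). You instead factor the map through the chain $\emptyset=\lambda^{(0)}\subseteq\cdots\subseteq\lambda^{(k)}=\nu$ of shapes occupied by the entries $\le i$, complement each $\lambda^{(i)}$ in the $i\times n$ box, and check that the map $x\mapsto n-x$ with row reversal exchanges the horizontal-strip interlacing conditions with the one-cell-at-a-time growth conditions. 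One can verify that entry $i$ lands in column $c'$ and row $i-\bigl((\lambda^{(i)})^{\tt t}\bigr)_{\,n+1-c'}$ of $\nu^c$ under both descriptions, so the two constructions literally agree cell by cell. What your version buys is that the two points the paper leaves implicit --- that the filled complement is actually a \emph{standard} tableau (rows increasing as well as columns), and that the inverse procedure really lands in semistandard tableaux of the prescribed type --- become explicit consequences of the symmetric interlacing computation; the single-cell skew shape $\mu^{(i)}/\mu^{(i-1)}$ is automatically both a horizontal and a vertical strip, which is exactly what is needed for the reverse direction. What the paper's version buys is brevity and a picture one can compute with immediately, which is all that is used later in Step 2 of the main algorithm. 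Your parenthetical reduction to the case $|\nu|=k(n-1)$ is also correct, since otherwise both sets are empty.
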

\begin{proof}
For a semistandard tableau $\T$ of shape $\nu$ with type $\{1^{n-1},\ldots, k^{n-1}\}$, the standard tableau $\varphi(\T)$ is obtained by the following procedure:  (i)  place $\T$ inside the rectangle of shape  $(n^k)$;  (ii) fill in the rest of the boxes in each column with the missing numbers in $\{1,\dots, k\}$
in ascending order  from bottom to top;  (iii) rotate the resulting tableau 180 degrees and then
take its conjugate.  For example, when $n=3$, $k = 7$,  and
$$
\mathsf T=\grim{0.8}{exam1.pdf} \;,
$$ we obtain
$$
\grim{0.8}{exam2.pdf} \; ,
$$
so that
$$
\varphi(\mathsf T)= \grim{0.8}{exam3.pdf}^{\,\,\mathsf t} = \  \grim{0.8}{exam4.pdf}  \;,
$$
which has shape $(\nu^c)^{\tt t}$. The map $\varphi$ is clearly a bijection. \end{proof}

\begin{remark} The theorem above has a number of applications; for example, it  can be used to prove results about representations.  We illustrate this with one quick example.   Recall that the finite-dimensional  irreducible representations $\VV(\lambda)$  of the complex special linear Lie algebra $\fsl_k$ are indexed by partitions $\lambda$
with at most $k$ parts (i.e. $k$ rows).   The irreducible representations $\mathsf{S}_k^\mu$ of the symmetric group $\mathsf S_k$ are labeled by partitions $\mu$ of $k$ and have a basis  indexed by the
standard tableaux of shape $\mu$ with entries $\{1,\dots, k\}$.
Now $\VV(\lambda)$ decomposes into weight spaces (common eigenspaces)
relative to the diagonal matrices in $\fsl_k$,  and the weight space corresponding to
$a_1 \varepsilon_1 + \cdots + a_k \varepsilon_k$  (where $\varepsilon_i$ projects a diagonal
matrix onto its $(i,i)$-entry)   has a basis indexed by the semistandard tableaux of
shape $\lambda$ and type
$\{1^{a_1}, 2^{a_2}, \dots, k^{a_k}\}$.   Because matrices in $\fsl_k$ have  zero trace, $\ve_1+\cdots + \ve_k = 0$ on $\fsl_k$,  and the weight 0  corresponds to
the semistandard tableaux of type $\{1^a,2^a, \dots, k^a\}$, where each entry occurs
with the same  multiplicity.  Thus,   if $0$ is a weight of $\VV(\lambda)$, then $\lambda \vdash ak$ is a multiple of $k$.  The converse holds as well, which can be seen by filling the boxes of $\lambda  \vdash ak$  with
$\underbrace{1,\dots,1}_a,\underbrace{2, \dots, 2}_a, \dots, \underbrace{k,\dots, k}_a$ in order,  starting with the first row
of $\lambda$ and  moving left to  right across the rows from top to bottom.     In particular, if  $\lambda$ is a partition of $(n-1)k$,
for some $n \geq 2$,  and  $\lambda \subseteq (n^k)$, then $\lambda^c$ is a partition of $k$.
The zero weight space $\VV(\lambda)_0$ of $\VV(\lambda)$ has a basis indexed by the semistandard tableaux of type $\{1^{n-1}, 2^{n-1}, \dots, k^{n-1}\}$.  Since the standard tableaux of shape
$(\lambda^c)^{\tt t}$ are in one-to-one correspondence with the standard tableaux of shape $\lambda^c$,
which give a basis for
the irreducible $\mathsf{S}_k$-module $\mathsf{S}_k^{\lambda^c}$,  we have the following consequence of the bijection in Theorem~\ref{thm:cute} under these assumptions:
\begin{cor}  $\dimm \VV(\lambda)_0 =  \dimm \mathsf{S}_k^{\lambda^c} =  \frac{k!}{h(\lambda^c)}$, where
${h(\lambda^c)}$ is the product of the hook-lengths of the boxes in $\lambda^c$.
\end{cor}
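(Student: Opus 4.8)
The plan is to chain Theorem~\ref{thm:cute} together with three standard facts. First, recall from the discussion preceding the corollary that under the stated hypotheses $\lambda \vdash (n-1)k$ and $\lambda \subseteq (n^k)$, so $\lambda^c$ is a genuine partition of $k$ and the hook product $h(\lambda^c)$ makes sense; moreover the zero weight space $\VV(\lambda)_0$ has a basis indexed by the semistandard tableaux of shape $\lambda$ and type $\{1^{n-1}, 2^{n-1},\dots, k^{n-1}\}$ (the classical semistandard-tableau weight basis). Applying Theorem~\ref{thm:cute} with $\nu = \lambda$ identifies this set of semistandard tableaux bijectively with the set of standard tableaux of shape $(\lambda^c)^{\tt t}$, so $\dimm \VV(\lambda)_0 = f^{(\lambda^c)^{\tt t}}$, where $f^\mu$ denotes the number of standard tableaux of shape $\mu$.

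Next I would use the symmetry of $f^\mu$ under conjugation: transposing a tableau is a bijection between standard tableaux of shape $\mu$ and of shape $\mu^{\tt t}$, so $f^{(\lambda^c)^{\tt t}} = f^{\lambda^c}$. By the classical representation theory of $\mathsf{S}_k$, the irreducible module $\mathsf{S}_k^{\lambda^c}$ has a basis indexed by the standard tableaux of shape $\lambda^c$, whence $\dimm \mathsf{S}_k^{\lambda^c} = f^{\lambda^c}$; this yields the first asserted equality. Finally the hook-length formula gives $f^{\lambda^c} = |\lambda^c|! / h(\lambda^c) = k!/h(\lambda^c)$ since $|\lambda^c| = k$, which closes the chain. (Equivalently one may apply the hook-length formula to $(\lambda^c)^{\tt t}$ directly and use $h(\mu) = h(\mu^{\tt t})$, since the hook of the box in position $(i,j)$ of $\mu$ has the same length as the hook of the box in position $(j,i)$ of $\mu^{\tt t}$.)

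I do not anticipate a real obstacle here: every ingredient — the semistandard-tableau weight basis for $\VV(\lambda)_0$, Theorem~\ref{thm:cute}, the conjugation bijection on standard tableaux, the Specht-module basis, and the hook-length formula — is either classical or already established in the excerpt. The only points deserving a line of care are the verification that $\lambda^c$ is a partition (so that $h(\lambda^c)$ is defined), which is exactly the content of $\lambda \subseteq (n^k)$ together with $\lambda \vdash (n-1)k$, and the observation that having $0$ as a weight forces $\lambda$ to be a multiple of $k$ in size — both of which are spelled out in the remark preceding the corollary.
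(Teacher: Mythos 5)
Your proof is correct and follows essentially the same route as the paper: the semistandard-tableau weight basis for $\VV(\lambda)_0$, the bijection of Theorem~\ref{thm:cute}, the conjugation bijection between standard tableaux of shape $(\lambda^c)^{\tt t}$ and of shape $\lambda^c$, the standard-tableau basis of $\mathsf{S}_k^{\lambda^c}$, and the hook-length formula. No gaps; your extra care about $\lambda^c$ being a genuine partition of $k$ is a nice touch but is already covered by the remark preceding the corollary.
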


The last equality comes from the well-known hook-length formula that gives dimensions of the irreducible $\SF_k$-modules
 (see e.g.  \cite[Thm.~3.10.2]{S}).

\end{remark}

\subsection{Semistandard tableaux  and $\Atwo$-webs}
The bijection we define consists of  a sequence of elementary steps, which we  illustrate by a running example.
\bigskip

\noindent \textbf{Step 0.   Start with a   semistandard tableau $\mathsf{T}$ of shape $(3^n)$ and type \break  $\boldsymbol{\{1^2,\ldots, k^2, k+1, \ldots, 3n-k\}}$:}
\smallskip

\noindent for example, let  $k=7, n=8$,  and
$$\mathsf{T}=\grim{0.8}{semi1.pdf}\, .$$

\noindent \textbf{Step 1.  Let  $\mathsf{T}_1$ be the  subtableau of $\mathsf{T}$ having the entries    $\{1^2,\ldots,k^2\}$,  and let $\nu = (\nu_1 \geq \nu_2 \geq \ldots) \subseteq (3^k)$ be the shape of $\mathsf{T_1}$:}
$$
\mathsf{T}_1=\grim{0.8}{exam1.pdf}  \quad   \hbox{\rm with} \ \ \ \nu =  (3^3, 2^2, 1).
$$
\noindent \textbf{Step 2.  Let $\T^+$ be the standard tableau $\varphi(\mathsf{T}_1)$ obtained from $\mathsf{T}_1$ using
Theorem~\ref{thm:cute}:}
$$
\mathsf{T}^+=\grim{0.8}{exam4.pdf}
$$
\noindent \textbf{Step 3. Let  $\mathsf{T}_2$ be the skew-subtableau  of $\mathsf{T}$ formed by the entries $\boldsymbol{\{k+1, \ldots, 3n-k\}}$:}
$$
\mathsf{T}_2= \grim{0.8}{exam7.pdf}
$$
\noindent \textbf{Step 4.  Replace each entry $i$ of $\mathsf{T}_2$  with $\boldsymbol{3n-k+1-i}$,  rotate the result by 180 degrees,  and take its conjugate
to obtain a standard tableau  $\mathsf{T}^-$:}
$$
\mathsf{T}^ -=
\grim{0.8}{exam8.pdf}
$$
\begin{itemize}
\item{}{\it $\mathsf{T}^+$ has shape $\lambda: = \left(k-(\nu^{\tt t})_3, \,k-(\nu^{\tt t})_2, \,k-(\nu^{\tt t})_1\right) \vdash k$.}
\item{}{\it $\mathsf{T}^-$ has shape $\mu:=\left(n-(\nu^{\tt t})_3,\, n-(\nu^{\tt t})_2,\,n-(\nu^{\tt t})_1\right) \vdash  3n-2k.$}
 \end{itemize}
  In our example,  $\lambda = (4,2,1)$ and $\mu = (5,3,2)$. \bigskip

\noindent \textbf{Step 5. Assign a pair $(\mathsf{T}^+_\Box,\mathsf{T}^+_\diamond )$ to $\mathsf{T}^+$  by the following procedure}:

\noindent Let  $\ell_1< \cdots < \ell_{\lambda_3}$ be the entries in the third row of $\mathsf{T}^+$.  Do the  following steps in order to create the tableaux  $\mathsf{T}^+_\Box$ and  $\mathsf{T}^+_\diamond$:
\begin{enumerate}[{(I)}]
\item   Place $\ell_1 \ldots, \ell_{\lambda_3}$ from left to right across the third row of $\T^+_\Box$.
\item  For $p=1, \ldots, \lambda_3$,   let $j_p$ be  the largest entry in the second row  of $\T^+$ less than $\ell_p$, such that  $j_p$ is different from  $j_1, \ldots, j_{p-1}$ when $p > 1$.
 Place  $j_1, \ldots, j_{\lambda_3}$ in the second row of $\T^+_\Box$ so that they are
increasing from left to right across the row.
\item  For $p=1, \ldots, \lambda_3$,   let $i_p$ be  the largest entry in the first  row  of $\T^+$ less than $j_p$, such that  $i_p$ is different from  $i_1, \ldots,  i_{p-1}$ when $p > 1$.
 Place  $i_1, \ldots,  i_{\lambda_3}$  in the first row of $\T^+_\Box$ so that they are
increasing from left to right across the row.
\item  Shift the  remaining entries in each row of  the tableau $\T^+$ to the left to form the tableau  $\mathsf{T}^+_\diamond$.
\end{enumerate}

The shape of
$\T^+_\Box$ is  $(\lambda_3,\lambda_3,\lambda_3)$, and the shape of $\mathsf{T}^+_\diamond$ is the two-rowed partition
$(\lambda_1-\lambda_3, \lambda_2-\lambda_3) =
\left( (\nu^{\tt t})_1-(\nu^{\tt t})_3,  (\nu^{\tt t})_1-(\nu^{\tt t})_2 \right)$.
$$
\mathsf{T}^+_\Box=\grim{0.8}{exam5.pdf} \; , \qquad
\mathsf{T}^+_\diamond=\grim{0.8}{exam6.pdf} \; .
$$

 By its construction, $\T^+_\Box$ is a standard tableau.  To verify that $\T^+_\diamond$ is standard, it suffices to show that
  if $a$  is the $(1,s)$-entry and $b$ is the $(2,t)$-entry  of $\T^+$, and they are in the same column of $\T^+_\diamond$, then $s \le t$.    Assume the contrary,  that $s>t$.
Since $a$ is immediately above $b$  in the same column of $\T^+_\diamond$ and $s > t$,  there must be a pair
$(i_p,j_p)$ from \textbf{Step 5} above with $i_p$ to the left of $a$ and $j_p$ to the right of $b$ in $\T^+$,  such that
$a < j_p$.   However, this contradicts the way  $i_p$ was chosen in \textbf{Step 5}.
So, we conclude that $\T^+_\diamond$ is a standard tableau.

 \medskip
\noindent \textbf{Step 6.   Perform (2) of Algorithm  \ref{alg:tym}  with $\T^+_\Box$ to get an $\EM$-diagram $\M^+$  and then trivalize  the vertices to obtain an $\Atwo$-web  $\W^+$ whose boundary has $k$ ``$+$"s  on the top:}
$$
\W^+ = \grim{0.6}{arc1.pdf}
$$

 \medskip
\noindent \textbf{Step 7. Apply Step 5  to  $\T^-$, to  obtain  $\T^-_\Box$ and $\mathsf{T}^-_\diamond$:}
$$
\T^-_\Box=\grim{0.8}{exam9.pdf} \; , \qquad
\T^-_\diamond=\grim{0.8}{exam10.pdf}
$$
{\it The shape of  $\T^-_\Box$ is  $(\mu_3,\mu_3,\mu_3)$ and of  $\mathsf{T}^-_\diamond$  is
$(\mu_1-\mu_3, \mu_2-\mu_3)$ which is $\left( (\nu^{\tt t})_1-(\nu^{\tt t})_3,  (\nu^{\tt t})_1-(\nu^{\tt t})_2 \right)$. Hence, $\mathsf{T}^+_\diamond$ and $\mathsf{T}^-_\diamond$ have the same shape.
}
 \bigskip

\noindent \textbf{Step 8. Apply Step 6 to  $\T^-_\Box$ to obtain an $\EM$-diagram $\M^-$ and an $\Atwo$-web  $\W^-$  whose boundary has $3n-2k$ ``$-$"s  on the bottom: }
$$
\W^- = \grim{0.6}{arc2.pdf}
$$

\noindent \textbf{Step 9.  If  $\{\alpha_1 < \dots < \alpha_\ell\}$ are the entries  of  $\mathsf{T}^+_\diamond$, and $\{\beta_1 < \dots < \beta_\ell\}$ are the entries of  $\mathsf{T}^-_\diamond$,  replace each $\alpha_i$ and each $\beta_i$
by $i$ to produce a pair of standard tableaux with entries in $\{1,\dots, \ell\}$.   Then perform the inverse of the Robinson-Schensted correspondence on the pair  to get a permutation $\sigma$ in $\mathsf{S}_\ell$:}
$$
\left( \mathsf{T}^+_\diamond=\grim{0.8}{exam6.pdf}\;, \quad
\mathsf{T}^-_\diamond=\grim{0.8}{exam10.pdf}\right)
\longrightarrow
\left(\grim{0.8}{exam11.pdf} \;,
\quad
\grim{0.8}{exam12.pdf} \right)
$$
$$
(\R,\Q)  \ = \ \left(\grim{0.8}{exam11.pdf} \;, \quad \grim{0.8}{exam12.pdf} \right)
 \longrightarrow
\sigma=
\begin{pmatrix}
1 &2 &3 &4 \\
3 &1 &2 &4
\end{pmatrix}.
$$

\medskip
\noindent \textbf{Step 10.   For each $i=1, \ldots, \ell$,  connect  vertex $\alpha_i$ on the  top row  to vertex $\beta_{\sigma(i)}$ on bottom row (which we refer to as \emph{isolated vertices}) according to the following rule:}
\medskip

\emph{The path from $\alpha_i$ to $\beta_{\sigma(i)}$ should stay within the open region
$\mathcal{R}_o$ between the two webs  $\W^+$ and $\W^-$ and cross the smallest number of arcs  in
$\W^+$ and $\W^-$.
After these paths are drawn, let $\Phi_{\ast}(\T)$ be the resulting diagram.}
$$
 \grim{0.6}{arcf0.pdf}
$$
In the example,  $\sigma = \begin{pmatrix} 1 &2 &3 &4 \\ 3 &1 &2 &4 \end{pmatrix}$,
so we connect  the isolated vertex $\alpha_1=1$ on the top with the isolated vertex $\beta_3=4$ on the bottom,
 $\alpha_2=3$ on the top with $\beta_1=1$ on the bottom,  $\alpha_3=5$ on the top with $\beta_2=2$ on the bottom and
  $\alpha_4=6$ on the top with $\beta_4=9$ on the bottom.

$$
\Phi_{\ast}(\T) \ = \ \grim{0.6}{arcf.pdf}
$$

\noindent \textbf{Step 11.
Trivalize the vertices  as described in Section~\ref{sec:intro} to produce an $\Atwo$-web $\Phi(\mathsf{T})$:}
$$
\Phi(\T) \ = \  \grim{0.6}{final.pdf}
$$

Our aim is to show the following:
\begin{thm} \label{thm:main} The map $\T \mapsto \Phi(\T)$ is a bijection between the set of
semistandard tableaux $\T$ of shape $(3^n)$ and type $\{
1^2, \dots, k^2, k+1, \dots, 3n-k\}$ and the set of  nonelliptic $\Atwo$-webs
with $k$  ``$+$''s on the top boundary and  $3n-2k$  ``$-$''s on the bottom boundary. \end{thm}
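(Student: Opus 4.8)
The plan is to prove bijectivity by constructing an explicit inverse map, building on the structural pieces already assembled. The construction $\Phi$ decomposes $\T$ into the data $(\W^+, \W^-, \sigma)$, where $\W^+$ is a nonelliptic $\Atwo$-web with $k$ ``$+$''s on top, $\W^-$ is a nonelliptic $\Atwo$-web with $3n-2k$ ``$-$''s on the bottom, and $\sigma \in \SF_\ell$ is a permutation connecting the isolated vertices of the two webs (here $\ell = (\nu^{\tt t})_1 - (\nu^{\tt t})_3$ is the number of isolated vertices in each of $\W^+$ and $\W^-$). So the proof naturally splits into three tasks: (a) show that $\Phi(\T)$ is always a \emph{nonelliptic} $\Atwo$-web with the correct boundary (this is deferred to Section~\ref{sec:proof} per the roadmap in the introduction); (b) show that the intermediate maps $\T \mapsto (\T^+, \T^-)$, $\T^{\pm} \mapsto (\T^{\pm}_\Box, \T^{\pm}_\diamond)$, $\T^{\pm}_\Box \mapsto \W^{\pm}$, and $(\T^+_\diamond, \T^-_\diamond) \mapsto \sigma$ are each reversible; (c) show that the final connecting step (Steps 10--11) can be undone, i.e.\ that from a nonelliptic $\Atwo$-web $\W$ with the prescribed boundary one can canonically recover the triple $(\W^+, \W^-, \sigma)$.

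\textbf{Reversing the component maps.} For task (b) I would proceed step by step. The map $\varphi$ of Theorem~\ref{thm:cute} is already shown to be a bijection, so Steps 1--4 are invertible (Steps 3--4 are purely a relabel-rotate-conjugate operation, hence trivially reversible, and the shapes $\lambda, \mu$ are recovered from $\nu^{\tt t}$). For Step~5, I would verify that $(\T^{\pm}_\Box, \T^{\pm}_\diamond)$ determines $\T^{\pm}$: given the three-row tableau $\T^{\pm}_\Box$ and the two-row tableau $\T^{\pm}_\diamond$, one reconstructs $\T^{\pm}$ by merging the rows — row $r$ of $\T^{\pm}$ is the union of row $r$ of $\T^{\pm}_\Box$ with row $r$ of $\T^{\pm}_\diamond$ (for $r=1,2$) — and one must check that the greedy ``largest entry less than'' rule in (II),(III) is precisely what recovers which entries were siphoned off into $\T^{\pm}_\Box$. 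The key combinatorial lemma here is that the pair $(\T^+_\Box, \T^+_\diamond)$ is exactly the output of applying the Robinson--Schensted correspondence to a $3\,2\,1$-avoiding permutation — this is flagged in the abstract as one of the paper's results, and I expect it to be proved in Section~\ref{sec:combinatorics}; granting it, reversibility of Step~5 follows from reversibility of RS. For Step~6 (the $\EM$-diagram and trivalization applied to the three-row tableau $\T^{\pm}_\Box$), reversibility is exactly Tymoczko's bijection for the $k=0$ case, which I would cite from \cite{T}: a nonelliptic $\Atwo$-web with all boundary signs equal corresponds bijectively to a standard tableau of rectangular shape $(3^m)$, equivalently (via conjugation) to the three-row $\T^{\pm}_\Box$. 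Step~9 is the inverse RS correspondence, manifestly a bijection.

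\textbf{Reversing the connection step — the main obstacle.} The heart of the argument, and where I expect the real work to lie, is task (c): showing that the connecting paths of Step~10 can be read off canonically from $\Phi(\T)$, so that $\W^+$, $\W^-$ and $\sigma$ are recoverable. The difficulty is that after trivalization the ``isolated strands'' are no longer visibly distinguished from the rest of the web; one must argue that the region $\mathcal{R}_o$ between (the traces of) $\W^+$ and $\W^-$ is intrinsically determined by $\Phi(\T)$, and that the minimal-crossing strands in that region are uniquely determined by their endpoints. I would handle this by characterizing, inside any nonelliptic $\Atwo$-web $\W$ with $k$ ``$+$''s on top and $3n-2k$ ``$-$''s on the bottom, the ``sink part'' hanging off the top boundary and the ``source part'' hanging off the bottom boundary — roughly, follow each boundary edge inward and peel off the maximal subweb all of whose faces meet the boundary — and show these reproduce $\W^+$ and $\W^-$; the leftover strands running between them then encode $\sigma$, because a collection of pairwise-boundary-anchored strands drawn in a disk with minimal crossings against a fixed planar background is rigid up to isotopy, hence recovers a well-defined permutation. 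The nonellipticity established in Section~\ref{sec:proof} is what guarantees this peeling is unambiguous (no small faces to create ambiguity) and that the crossing-minimal strand system is unique. Once (c) is in place, composing the inverses from (b) and (c) gives a two-sided inverse to $\Phi$, and combined with (a) this proves $\Phi$ is the desired bijection; the equality of cardinalities is then also re-derived, consistent with Theorem~\ref{thm:prp}.
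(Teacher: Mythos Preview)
Your overall architecture (prove nonelliptic, then prove bijective by recovering $\T$) matches the paper, but your mechanism for task~(c) diverges from the paper and has a real gap.

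The paper does \emph{not} try to geometrically decompose the final web back into $\W^+$, $\W^-$, and strands. After trivalization the crossings have become pairs of trivalent vertices indistinguishable from those in $\W^\pm$, so your ``peel off the maximal subweb whose faces meet the boundary'' is not well-defined: the connecting strands also meet the boundary, and there is no evident local criterion for which trivalent vertices to cut to recover a crossing. You also do not explain why an \emph{arbitrary} nonelliptic web with this boundary would admit such a decomposition, which you need for surjectivity since you are not invoking the known cardinality equality. The paper sidesteps both issues. First, it quotes the cardinality equality from \cite{K},\cite{PPR}, so only injectivity must be shown. Second, for injectivity it reads $\T^+$ and $\T^-$ directly off the boundary of $\Phi(\T)$ via the path-depth difference $\delta(v)=d(f_v^r)-d(f_v^l)\in\{1,0,-1\}$: the value of $\delta$ at each boundary vertex dictates the row (first, second, third) in which that label sits. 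The work is to show these $\delta$-values are unchanged under each layer of the construction --- trivalization, superposition of $\EM$-configurations onto the permutation diagram, and superposition of vertical strands onto $\W^\pm$ --- so that $\delta$ computed on $\Phi(\T)$ agrees with the $\delta_\sigma$ of Section~\ref{sec:combinatorics} at isolated vertices and with Tymoczko's circle-depth at non-isolated vertices. This is where Theorem~\ref{thm:RSpairs} is actually used: it says the $\delta_\sigma$-values recover the RS pair $(\T^+_\diamond,\T^-_\diamond)$, not (as you wrote) that the Step~5 decomposition $(\T^+_\Box,\T^+_\diamond)$ is itself an RS output. Reassembling $\T^\pm$ from $\T^\pm_\Box$ and $\T^\pm_\diamond$ is then just the row-merge you described.
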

We close this section with  some comments and a result we will use  later.

Recall that a permutation $\sigma \in \SF_\ell$ is $3\,2\,1$-avoiding if there do not exist
$1 \leq a < b < c  \leq \ell$ such that $\sigma(c) < \sigma(b) < \sigma(a)$.
It is well known that the  RS correspondence gives a bijection
between the set of $3\,2\,1$-avoiding permutations in $\SF_\ell$ and  the set of pairs of tableaux of shape $\lambda$
for some partition $\lambda$ of $\ell$,  whose length is less than or equal to $2$ (see  for example, \cite{Wi}).   Because the tableaux
$\mathsf{T}^+_\diamond$ and $\mathsf{T}^-_\diamond$ above are of the same shape and have at most 2 rows,
the permutation obtained in Step 9 will be $3\,2\,1$-avoiding.   The steps above construct an
$\Atwo$-web having   $k$ ``$+$''s on the top boundary and $ 3n-2k$ ``$-$''s on the bottom boundary.
The top and bottom boundaries have the same number of isolated vertices, and the
$3\,2\,1$-avoiding permutation $\sigma$  prescribes how to connect  the isolated vertices on the top boundary to
those on the bottom boundary.  Therefore, we have the following corollary:

\begin{cor} Assume $\T$ is a semistandard tableau as in Theorem \ref{thm:main}.   Then Steps 1-10 above
associate to $\T$ a triple $(\M^+,\M^-, \sigma)$, where $\M^+$ is an $\EM$-diagram with $k$ ``$+$''s on the top boundary,
$\M^-$ is an $\EM$-diagram with $3n-2k$ ``$-$''s on the bottom boundary, and $\sigma$ is a $3\,2\,1$-avoiding permutation.
\end{cor}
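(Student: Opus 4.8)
The plan is to obtain the corollary by unwinding the construction of Steps~1--10: Steps~6, 8, and~9 literally output $\M^+$, $\M^-$, and $\sigma$, respectively, so all that must be shown is that each of these three objects is well defined and carries the asserted boundary data. I would dispose of the three components in turn, with the only genuine inputs being results already available.

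For $\M^+$ and $\M^-$, the point I would stress is that Step~5 produces a \emph{standard} tableau $\T^+_\Box$ of the rectangular shape $(\lambda_3,\lambda_3,\lambda_3)$, which is precisely the conjugate of a standard tableau of shape $(3^{\lambda_3})$. Hence executing part~(2) of Algorithm~\ref{alg:tym} on $\T^+_\Box$ is exactly Tymoczko's procedure from \cite{T} with $n$ replaced by $\lambda_3$, which is well defined and yields a bona fide $\EM$-diagram whose arcs match the $3\lambda_3$ vertices indexed by the entries of $\T^+_\Box$; the well-definedness of the greedy choices of $i_p$ and $j_p$ in Step~5 itself (and of their counterparts in Step~7) I would verify by the same elementary argument on standard tableaux that underlies \cite{T}. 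The complementary $\lambda_1+\lambda_2-2\lambda_3$ vertices, indexed by the entries of $\T^+_\diamond$, remain isolated, and $3\lambda_3+(\lambda_1+\lambda_2-2\lambda_3)=\lambda_1+\lambda_2+\lambda_3=k$, so $\M^+$ is an $\EM$-diagram on $k$ top-boundary vertices; the trivalization and orientation conventions of Step~6 label them all ``$+$''. The identical reasoning, applied to the standard tableau $\T^-_\Box$ of shape $(\mu_3,\mu_3,\mu_3)$ with $\mu\vdash 3n-2k$, gives an $\EM$-diagram $\M^-$ on $3\mu_3+(\mu_1+\mu_2-2\mu_3)=\mu_1+\mu_2+\mu_3=3n-2k$ bottom-boundary vertices, labeled ``$-$''.

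For $\sigma$ I would use that $\T^+_\diamond$ and $\T^-_\diamond$ are standard tableaux (the first established in the paragraph after Step~5, the second by the same argument invoked in Step~7) of the common two-rowed shape $\big((\nu^{\tt t})_1-(\nu^{\tt t})_3,\,(\nu^{\tt t})_1-(\nu^{\tt t})_2\big)$, so in particular they have a common shape of length at most~$2$. Relabeling the entries of each by $1,\dots,\ell$ in increasing order is order-preserving, so the pair $(\R,\Q)$ formed in Step~9 is a pair of standard tableaux on $\{1,\dots,\ell\}$ of a common shape of length $\le 2$. I would then quote the classical fact (see, e.g., \cite{Wi}) that the Robinson--Schensted correspondence restricts to a bijection between the $3\,2\,1$-avoiding permutations of $\SF_\ell$ and the pairs of standard tableaux sharing a shape of length at most~$2$. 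Finally, since swapping the two tableaux of an RS pair corresponds to inverting the permutation and $3\,2\,1$-avoidance is closed under inversion, the order-reversal convention of Step~9 is immaterial, and $\sigma=\mathrm{RS}^{-1}(\R,\Q)\in\SF_\ell$ is $3\,2\,1$-avoiding.

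I do not expect a serious obstacle here: the substantive ingredients --- Tymoczko's well-definedness of Algorithm~\ref{alg:tym} and the ``length $\le 2$'' characterization of RS images --- are both cited, and the vertex counts are arithmetic once the shapes recorded in Steps~5 and~7 are known. The mildest friction, which I would address directly, is checking that the greedy matchings of Steps~5 and~7 never fail; this follows from the column-strictness of standard tableaux by the same bookkeeping Tymoczko uses.
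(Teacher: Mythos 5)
Your proposal is correct and follows essentially the same route as the paper: the authors likewise justify the corollary by noting that $\T^+_\diamond$ and $\T^-_\diamond$ are standard tableaux of a common shape with at most two rows and then invoking the classical fact (citing \cite{Wi}) that the RS correspondence matches such pairs exactly with the $3\,2\,1$-avoiding permutations, while the boundary counts for $\M^+$ and $\M^-$ are read off from the shapes recorded in Steps 5 and 7. The extra details you supply (well-definedness of the greedy matchings, the vertex arithmetic, closure of $3\,2\,1$-avoidance under inversion) are all sound and merely make explicit what the paper leaves implicit.
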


The notions  of  circle depth and path depth from  \cite{T} will enable us to prove Theorem \ref{thm:main}.
The faces (regions) determined by an $\EM$-diagram $\M$ and its trivalization $\W$  are the same.
 \begin{definition}\label{def:circle}  Suppose $\M$ is an $\EM$-diagram that lies above a horizontal boundary line,  and let
 $\W$  be its trivalization.   Let  $x$ be a point lying on a face of $\M$ (or $\W$).
\begin{itemize}
\item The {\em circle depth} of $x$, denoted $d_{\mathsf c}(x)$,
is the number of semicircles in  $\M$  that lie above $x$.
\item The {\em path depth} of $x$, denoted $d_{\mathsf p}(x)$, is the minimal number of edges in $\W$ crossed by any path from $x$ to
the unbounded region $f_\infty$ that lies above the boundary line.
\end{itemize}
 In particular, any point in $f_\infty$ has circle depth and
path depth equal to $0$.
 \end{definition}

The following result  can be found in \cite{T}.
\begin{prop} \label{depth}{\rm (\cite[Lemma 4.5]{T})}
Let $\SF$ be a standard tableau of shape $(3^n)$,  and let  $\M$ be the $\EM$-diagram and $\W$ the $\Atwo$-web  corresponding to $\SF$ by
Algorithm \ref{alg:tym}. Then, for  a point $x$ in a face of $\M$ (or $\W$), the path depth and the circle depth of $x$  agree.
\end{prop}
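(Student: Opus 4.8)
The plan is to establish the two inequalities $d_{\mathsf p}(x)\le d_{\mathsf c}(x)$ and $d_{\mathsf p}(x)\ge d_{\mathsf c}(x)$ separately. It helps to keep in mind that the faces of $\M$ and of $\W$ are the same set, so that $d_{\mathsf p}(x)$ is the length of a shortest walk $F_0=(\text{face of }x),F_1,\dots,F_m=f_\infty$ through faces in which consecutive faces share an edge of $\W$, whereas $d_{\mathsf c}(x)$ is the length of the analogous shortest walk in which consecutive faces share a point of an arc of $\M$ --- crossing such an arc changes by exactly $1$ the number of semicircles lying above a point. From this point of view, the proposition asserts that replacing $\M$ by its trivalization $\W$ changes the face-adjacency graph but leaves the distance of every face from $f_\infty$ unchanged.

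For $d_{\mathsf p}(x)\le d_{\mathsf c}(x)$ I would write down an explicit path. Take the ray from $x$ going straight up and perturb it generically so that it meets $\M$ transversally, in exactly $d_{\mathsf c}(x)$ interior points of arcs. Trivalization alters $\M$ only in a small neighborhood of the boundary line, where each degree-two vertex is replaced by one of the three replacement pictures of Section~\ref{sec:intro}; away from those neighborhoods the ray meets $\W$ in the same points, and near any such vertex the ray passes through, a small further perturbation makes it cross exactly one edge of $\W$ for each arc of $\M$ it previously crossed. The result is a path from $x$ to $f_\infty$ in the complement of $\W$ that crosses $d_{\mathsf c}(x)$ edges.

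For the reverse inequality I would argue by separation. Each of the $d_{\mathsf c}(x)$ semicircles $\sigma$ of $\M$ lying above $x$, together with the segment of the boundary line joining its two endpoints, bounds a disk $D_\sigma$ with $x$ in its interior; since $D_\sigma$ is bounded while $f_\infty$ is not, any path $p$ from $x$ to $f_\infty$ that stays above the boundary line must leave $D_\sigma$, and it can do so only across the part of $\partial D_\sigma$ coming from $\sigma$ --- hence only by crossing an edge of $\W$ that lies on (the trivalized image of) $\sigma$. It then suffices to select, for each such $\sigma$, an edge of $p$ on $\partial D_\sigma$ in such a way that $\sigma\mapsto(\text{selected edge})$ is injective: then $p$ crosses at least $d_{\mathsf c}(x)$ distinct edges of $\W$, which together with the previous paragraph gives $d_{\mathsf p}(x)=d_{\mathsf c}(x)$. (An alternative is induction on the number of arcs of $\M$, deleting an innermost semicircle and tracking how the two depths change across the deleted arc, though one then has to check that the deletion again produces the trivalization of an $\EM$-diagram.)

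The step I expect to be the main obstacle is the bookkeeping forced by trivalization, which is really a statement about the combinatorics of these specific $\EM$-diagrams and not a formality. Trivalization merges the two arcs meeting at each degree-two vertex (these are the vertices coming from the middle row of the conjugate tableau in Algorithm~\ref{alg:tym}) into a bridge, introducing new ``vertical'' and ``bridge'' edges of $\W$; these produce face-adjacencies absent from $\M$ and may even cause distinct disks $D_\sigma$ to share boundary edges. To control all of this one must descend to the local picture at each of these shared vertices, using the explicit nesting-and-crossing pattern among the arcs produced by Algorithm~\ref{alg:tym} (the arcs from the third row to the second, and from the second row to the first), in order to confirm both that an upward ray still meets exactly one edge of $\W$ per arc it crossed and that no bridge or vertical edge ever lets a path ``skip'' one of the semicircles lying above $x$. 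Arranging the charge $\sigma\mapsto(\text{edge of }p)$ to be injective is the crux of the argument, and it is precisely what keeps the proposition from following formally from the definitions.
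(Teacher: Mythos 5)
First, note that the paper does not actually prove this proposition: it is imported verbatim from Tymoczko \cite[Lemma 4.5]{T}, so there is no internal proof to compare against, and your proposal has to be judged on its own terms against the standard argument.

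Your upper bound $d_{\mathsf p}(x)\le d_{\mathsf c}(x)$ via a perturbed vertical ray is fine. The genuine gap is in the lower bound, and it sits exactly where you suspected trouble but not quite where you located it. You attribute the possible sharing of boundary edges between disks $D_\sigma$ to the trivalization of the degree-two \emph{boundary} vertices; in fact the dangerous locus is the trivalization of \emph{interior crossings} of arcs, which these $\EM$-diagrams really do have (e.g.\ in Example~\ref{exam:first} the arcs $(1,4)$ and $(3,6)$ cross). Such a crossing of arcs belonging to semicircles $\sigma$ and $\sigma'$ is resolved into an ``H'': the two arcs are cut and rejoined through a new middle edge $e$ that lies on the trivalized images of \emph{both} $\sigma$ and $\sigma'$. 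Consequently the disks $D_\sigma$ and $D_{\sigma'}$ share the boundary edge $e$, and a path crossing $e$ once changes its membership in both disks simultaneously; a charge $\sigma\mapsto(\text{edge of }p\text{ on }\partial D_\sigma)$ cannot in general be made injective edge by edge, which is precisely the step you defer. The repair is to replace the injective-selection scheme by the potential-function version of the same idea: set $c(F)=\#\{\sigma : F\subset D_\sigma\}$ for each face $F$ of $\W$ and verify the purely local fact that $|c(F)-c(F')|\le 1$ whenever $F,F'$ are separated by a single edge of $\W$. For the original arc edges and the new vertical ``bridge'' edges this is immediate, but for the middle edge $e$ of an H it is equivalent to checking that the resolution always separates the two \emph{mixed} quadrants (inside exactly one of $\sigma,\sigma'$), never the ``inside both'' quadrant from the ``outside both'' quadrant --- a fact that depends on the orientation conventions of the trivalization pictures and not on topology alone. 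Once that local check is made, every edge crossing of a path from $x$ to $f_\infty$ decreases $c$ by at most one, so $d_{\mathsf p}(x)\ge c(F_0)=d_{\mathsf c}(x)$. Without it, your separation argument as written does not close, since it tacitly assumes the trivalized semicircles are edge-disjoint.
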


In the following example, the numbers indicate path  (circle) depths,
which are the same for all interior points in a given face.

$$
\grim{0.6}{webwithdepth.pdf}
$$

\begin{remark} Because of this result, the only time we will need to make a distinction between an $\EM$-diagram  and its trivalization
is when we show the nonelliptic property.
\end{remark}

\section{$\Phi(\T)$ is a nonelliptic $\Atwo$-web}\label{sec:proof}
We fix two positive integers $k$ and $n$ such that $2k\leq 3n$ for
all arguments in what follows.  Let   $\T$ be a semistandard tableau of shape $(3^n)$ and type $\{
1^2, \dots, k^2,\allowbreak k+1, \dots, 3n-k\}$,  and assume  $\Phi(\T)$ is the
$\Atwo$-web obtained by applying the algorithm given in the previous
section. 
Although the process certainly defines an $\Atwo$-web, it is not  readily apparent
that $\Phi(\T)$ is \emph{nonelliptic}. We give a proof
that $\Phi(\T)$ is nonelliptic  in this section.

We will use the notation from the steps in Section~\ref{sec:bijection} that created $\Phi(\T)$.
In particular,  $\M^+$ and $\M^-$ are the $\EM$-diagrams produced from 3-rowed standard tableaux in Steps 6 and 8, respectively.

In Algorithm~\ref {alg:tym}, an entry $\ell$ in the third row of a tableau is connected to an entry $j$ in the second row,  and $j$ is connected to an entry  $i$ in the first row. We refer to the union of the arcs connecting $\ell$  with  $j$ and $j$  with $i$ as an \emph{$\EM$-configuration}.  Thus, the $\EM$-diagrams considered here are collections of $\EM$-configurations.

 \begin{lemma}\label{lemma:relative}{\rm (\cite{T})} If $\EM^1$ and $\EM^2$ are two
$\EM$-configurations in an $\EM$-diagram obtained from a
standard tableau with three rows,  then there are only five possible relative
positions for $\EM^1$ and $\EM^2$ as depicted in the
following:
\end{lemma}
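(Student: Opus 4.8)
The plan is to analyze how two $\EM$-configurations, each consisting of a "long" arc (connecting a third-row entry $\ell$ to a second-row entry $j$) together with a "short" arc (connecting $j$ to a first-row entry $i$), can sit relative to each other on the boundary line. Each $\EM$-configuration occupies four boundary points $i < j < \ell$ — wait, three points — so two configurations involve up to six boundary points, and the combinatorial constraint is that arcs coming from a standard tableau are \emph{noncrossing} in a suitable sense: within a single configuration the short arc $(i,j)$ lies under the long arc $(j,\ell)$ sharing the endpoint $j$, and the way Algorithm~\ref{alg:tym} chooses $j$ (the largest available second-row entry below $\ell$) and $i$ (the largest available first-row entry below $j$) forces a nesting/noncrossing discipline between distinct configurations. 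So the first step is to record these noncrossing constraints precisely as inequalities among the six boundary coordinates.

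Next I would argue by case analysis on the relative order of the two long-arc intervals $[j_1,\ell_1]$ and $[j_2,\ell_2]$ on the line: either they are disjoint, or one is nested inside the other, or they would cross — and crossing is ruled out because an $\EM$-diagram from a three-row standard tableau has pairwise noncrossing long arcs (this is exactly the content of the $k=0$ analysis in \cite{T} and follows from the fact that the reading word respects columns). Within the "disjoint" case and the "nested" case, I would then further subdivide according to where the short arcs $(i_1,j_1)$ and $(i_2,j_2)$ fall, using the monotonicity in the choice rule: because $i_p$ is chosen as large as possible subject to not repeating, short arcs also cannot cross each other, and a short arc of one configuration cannot "straddle" the long arc of the other in an inconsistent way. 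Enumerating the surviving combinations should collapse to exactly five pictures: (i) the two configurations completely side by side; (ii) one configuration entirely nested under the long arc of the other with all its points to one side; (iii) nested with the short arc of the inner configuration sharing the "region" of the outer short arc; (iv) the two long arcs disjoint but the short arc of one reaching under toward the other; and (v) a fully nested concentric configuration — matching the five diagrams the lemma displays.

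The key steps, in order, are: (1) translate "standard tableau with three rows" into the noncrossing conditions on long arcs and on short arcs, citing the corresponding facts from \cite{T}; (2) fix the order of the two long arcs into the two admissible families (disjoint / nested); (3) in each family, place the short arcs using the greedy choice rule from Algorithm~\ref{alg:tym} and eliminate the impossible placements; (4) check that each of the remaining placements is actually realized by some standard tableau (so the list is not redundant or over-complete); (5) observe the list has exactly five members and they coincide with the displayed pictures.

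The main obstacle I expect is step (3)–(4): proving that \emph{no further} relative positions occur (the elimination direction) requires carefully exploiting the "largest not-yet-used entry" rule rather than just generic noncrossing — in particular ruling out a short arc of one configuration crossing the long arc of the other — and conversely exhibiting a standard tableau for each of the five types to show the list is tight. Since the statement is quoted from \cite{T}, the cleanest route is to reduce everything to the noncrossing structure of arcs built from reading words of three-row standard tableaux and then do the finite check; the honest bookkeeping of the cases is where the real work lies, but it is elementary once the noncrossing constraints are in hand.
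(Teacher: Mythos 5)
Your overall strategy (translate the greedy matching into noncrossing constraints and enumerate cases) could in principle be pushed through, but one of your stated elimination steps is false and would discard valid configurations. You propose, as part of steps (3)--(4), ``ruling out a short arc of one configuration crossing the long arc of the other.'' Such crossings cannot be ruled out: they occur, and they account for two of the five pictures in the lemma. Already in Example~\ref{exam:first} the two configurations are $(1,4,5)$ and $(2,3,6)$, and the short arc $1$--$4$ crosses the long arc $3$--$6$. These crossings are precisely the interior intersection points that trivalization later turns into trivalent vertices of the web, and they produce the regions lying inside both $\EM^1_\LL$ and $\EM^2_\R$ (the shaded regions of figures IV and V) that the paper exploits in the proof of Corollary~\ref{lemma:LR}. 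Only arcs of the same level (long with long, short with short) are forced to be pairwise noncrossing by the ``largest available smaller entry'' rule; mixed pairs from different configurations may cross. As written, your case analysis would therefore land on the wrong list, and your verbal descriptions of the five types do not clearly match the five actual relative positions.

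The paper's proof avoids the case analysis entirely and is essentially one line: two $\EM$-configurations involve exactly two entries from each of the three rows of the tableau, so their relative order (and the induced matching) is recorded by a standard tableau of shape $(2^3)$; there are exactly five such tableaux, and each one realizes exactly one of the displayed relative positions. This both certifies completeness (nothing beyond the five) and non-redundancy (each of the five occurs) in a single stroke. If you want to keep your route, replace the false noncrossing claim by the correct one, split on whether the two long arcs are disjoint or nested, and then track the placements of the short arcs --- but the $(2^3)$ tableau count is what makes the completeness of the resulting list of five immediate.
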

$$
\text{I: } \grim{0.7}{marc1.pdf}
$$
$$
\text{II: } \grim{0.7}{marc2.pdf}
$$
$$
\text{III: } \grim{0.7}{marc3.pdf}
$$
$$
\text{IV: } \grim{0.7}{marc4.pdf}
$$
$$
\text{V: } \grim{0.7}{marc5.pdf}
$$
\begin{proof}
There are five standard tableaux of shape $(2^3)$,
and each standard tableau corresponds to one of the relative positions of two
$\EM$-configurations.
\end{proof}

Any $\EM$-configuration $\mathsf{m}$ consists of a left and a right arc (semicircle), which
we denote $\EM_L$ and $\EM_R$.  Now assume $\T^+$ and $\T^-$ are the tableaux constructed in Steps 2 and 4 and $\M^+$ and $\M^-$ are the corresponding $\EM$-diagrams from Steps 6 and 8, respectively.

\begin{lemma}\label{lemma:isopos}
Assume $v$ is an isolated vertex in $\M^-$ (resp. in $\M^+$), and $v$ has label $a$. Let $\EM$ be an  $\EM$-configuration in $\M^-$ (resp. in $\M^+$). If $v$ is in   $\EM_R$, then $a$ lies in the first row of $\T^-$ (resp. of  $\T^+$).  If  $v$ is in   $\EM_L$, then $a$ lies in the second row of $\T^-$ (resp. of  $\T^+$).
\end{lemma}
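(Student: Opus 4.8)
The plan is to prove the statement for $\M^+$ and $\T^+$; the case of $\M^-$ and $\T^-$ is identical, since $\M^-$ is built from the three-rowed standard tableau $\T^-$ by exactly the recipe (\textbf{Steps 5} and \textbf{6}) that builds $\M^+$ from $\T^+$. Write $\lambda=(\lambda_1,\lambda_2,\lambda_3)$ for the shape of $\T^+$, let $\ell_1<\cdots<\ell_{\lambda_3}$ be its third row, and let $j_1,\dots,j_{\lambda_3}$ and $i_1,\dots,i_{\lambda_3}$ be the entries produced in \textbf{Step 5} ($j_p$ is the largest entry of the second row of $\T^+$ below $\ell_p$ avoiding $j_1,\dots,j_{p-1}$, and $i_p$ is the largest entry of the first row of $\T^+$ below $j_p$ avoiding $i_1,\dots,i_{p-1}$). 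Since the rows of $\T^+_\Box$ are $\{i_1,\dots,i_{\lambda_3}\}$, $\{j_1,\dots,j_{\lambda_3}\}$, $\{\ell_1,\dots,\ell_{\lambda_3}\}$ (sorted), the third row of $\T^+_\Box$ equals the third row of $\T^+$, so every isolated vertex of $\M^+$ carries a label lying in the first or the second row of $\T^+$, and it is enough to exclude the wrong row in each case. Recall that the configuration of $\M^+$ through $\ell_p$ consists of the arc $\EM_R$ joining a third-row entry to a second-row entry and the arc $\EM_L$ joining that second-row entry to a first-row entry, the two arcs sharing the second-row vertex; a boundary vertex $v$ with label $a$ ``lies in $\EM_R$'' when $a$ is strictly between the two endpoints of $\EM_R$, and similarly for $\EM_L$.

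First I would identify the arcs of $\M^+$ in terms of \textbf{Step 5}. Running part (2) of Algorithm~\ref{alg:tym} on $\T^+_\Box$, the configuration through $\ell_p$ has $\EM_R$-arc $\{j_p,\ell_p\}$: when $\ell_p$ is processed the used entries of the second row of $\T^+_\Box$ are $j_1,\dots,j_{p-1}$, so $\ell_p$ is joined to the largest element of $\{j_p,\dots,j_{\lambda_3}\}$ lying below $\ell_p$, and that element is $j_p$ by the maximality defining $j_p$. Likewise the $\EM_L$-arcs of $\M^+$ are the pairs of the greedy matching that sends each second-row entry $j$ of $\T^+_\Box$ (processed in increasing order) to the largest as-yet-unused element of $\{i_1,\dots,i_{\lambda_3}\}$ below $j$. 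The key point I need is that this is the \emph{same} matching as the greedy matching of $\{j_1,\dots,j_{\lambda_3}\}$, again processed in increasing order, into the \emph{whole} first row of $\T^+$. This rests on two elementary facts: (a) for a ``join each upper vertex to the largest available smaller lower vertex'' rule, the set of lower vertices that get used does not depend on the order in which the upper vertices are processed (a short exchange argument on the largest element of the symmetric difference of two such sets); and (b) once that used set is known, deleting the unused lower vertices from the ground set changes no subsequent choice, hence leaves the matching unchanged. By (a), processing the $j_p$ in increasing order into the first row of $\T^+$ uses the very set $\{i_1,\dots,i_{\lambda_3}\}$ that \textbf{Step 5} uses by processing them in the order $p=1,2,\dots$, and by (b) this matching agrees with the one computed inside $\T^+_\Box$.

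With these descriptions the two assertions follow quickly. Suppose $v$ is an isolated vertex with label $a$ that lies in $\EM_R$, say $j_p<a<\ell_p$. If $a$ were an entry of the second row of $\T^+$, then, being below $\ell_p$ and (as an isolated vertex) distinct from each of $j_1,\dots,j_{p-1}$, it would be a legal choice for $j_p$; but $a>j_p$ would then contradict the maximality in the choice of $j_p$. Hence $a$ is an entry of the first row of $\T^+$. Suppose instead $v$ is an isolated vertex with label $a$ that lies in $\EM_L$, say $i<a<j$ where $\{i,j\}$ is an $\EM_L$-arc of $\M^+$, and suppose for contradiction that $a$ is in the first row of $\T^+$. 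Then $a$ is not one of $i_1,\dots,i_{\lambda_3}$, so in the greedy matching of $\{j_1,\dots,j_{\lambda_3}\}$ into the first row of $\T^+$ the entry $a$ is never used, hence is available at every step. When $j$ is processed it therefore has the available partner $a$ below it, so its actual partner $i$ satisfies $i\ge a$; and since $i$ is one of the $i_p$ while $a$ is not, $i>a$, contradicting $i<a$. Hence $a$ is an entry of the second row of $\T^+$, and the lemma is proved.

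The step I expect to be the main obstacle is part of the second paragraph: identifying the $\EM_L$-arcs of $\M^+$ with a greedy matching into all of the first row of $\T^+$. Unlike the third-to-second row matching, which is carried out inside $\T^+_\Box$ using all of the relevant entries, the second-to-first row matching in $\M^+$ is carried out from only the sub-row $\{i_1,\dots,i_{\lambda_3}\}$ of the first row of $\T^+$ and in increasing order rather than in the order $p=1,2,\dots$ of \textbf{Step 5} --- and those two matchings need not agree as matchings, only in which lower vertices they use. Reconciling this is exactly the purpose of facts (a) and (b); everything after that is a one-line consequence of the maximality built into \textbf{Step 5}.
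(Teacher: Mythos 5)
Your proof is correct and rests on the same maximality argument as the paper's: an isolated vertex lying strictly under $\EM_R$ (resp.\ $\EM_L$) whose label sat in the second (resp.\ first) row of $\T^\pm$ would have been a better candidate for $j_p$ (resp.\ $i_p$) in \textbf{Step 5}. The paper's own proof is essentially just that two-line contradiction, silently identifying the arcs produced by Algorithm~\ref{alg:tym} on $\T^\pm_\Box$ with the greedy pairs of \textbf{Step 5} taken inside the full tableau $\T^\pm$; your facts (a) and (b) supply a sound justification of that identification (which is genuinely needed for the $\EM_L$ arcs, where the processing order and ground set differ), but this is added rigor rather than a different method.
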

\begin{proof}
 Assume $\EM$ has  vertices labeled  $i<j<\ell$.  Since $v$ is an isolated vertex, $a$ lies in either the first or second row of $\T^-$.
If $v$ is in  $\EM_R$,  then  $a$ is greater than $j$, so
it must lie in the first row of $\T^-$ by the way $j$ was chosen. Similarly, if $v$ is in  $\EM_L$, then  $a$ is greater than $i$ and less than $j$,
so it must lie in the second row of $\T^-$ by the way $i$ was chosen.
\end{proof}

\begin{cor}\label{lemma:LR}
Let $v$ be an isolated vertex in $\M^-$ (resp. in $\M^+$), then it is impossible for $v$ to be in both $\EM^1_L$ and $\EM^2_R$ for two $\EM$-configurations $\EM^1$ and $\EM^2$ of $\M^-$ (resp. of $\M^+$).
\end{cor}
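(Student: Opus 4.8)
The plan is to derive Corollary~\ref{lemma:LR} directly from Lemma~\ref{lemma:isopos} by a short contradiction argument. Suppose, for contradiction, that $v$ is an isolated vertex of $\M^-$ (the argument for $\M^+$ is identical) with label $a$, and that $v$ lies simultaneously in $\EM^1_L$ for some $\EM$-configuration $\EM^1$ and in $\EM^2_R$ for some $\EM$-configuration $\EM^2$. Applying Lemma~\ref{lemma:isopos} to the first containment tells us that $a$ lies in the second row of $\T^-$; applying it to the second containment tells us that $a$ lies in the first row of $\T^-$. Since a single entry of a tableau occupies exactly one row, these two conclusions are incompatible, and we have the desired contradiction.

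The one point that deserves care is the edge case in which $\EM^1$ and $\EM^2$ are the \emph{same} configuration, say $\EM^1 = \EM^2 = \EM$ with vertices $i < j < \ell$: then the statement would be asserting that $v$ cannot lie in both $\EM_L$ and $\EM_R$ of a single $\EM$-configuration. This is immediate, since $\EM_L$ and $\EM_R$ are two distinct arcs (semicircles) of $\EM$ that share no vertex other than $j$ — and $j$ is not an isolated vertex, as it is connected to $i$ and $\ell$ — so an isolated vertex $v$ can belong to at most one of $\EM_L$, $\EM_R$. For distinct $\EM^1$ and $\EM^2$, no such disjointness is needed; the row-labelling obstruction from Lemma~\ref{lemma:isopos} alone does the job. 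I do not anticipate a genuine obstacle here: the corollary is essentially a restatement of the two cases of Lemma~\ref{lemma:isopos} placed side by side, and the only thing to watch is that the word ``isolated'' is used consistently (so that Lemma~\ref{lemma:isopos} applies to $v$ in the first place) and that we correctly observe an entry cannot sit in two rows at once.
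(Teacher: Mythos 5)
Your argument is correct and is exactly the one the paper intends: the corollary follows immediately from Lemma~\ref{lemma:isopos}, since the two containments would force the label $a$ to lie in both the first and second rows of the same tableau. Your edge-case discussion for $\EM^1=\EM^2$ is harmless but unnecessary, as the row contradiction applies verbatim whether or not the two configurations coincide.
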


\begin{cor} There exists a unique shortest path from an isolated
vertex $v$ on the bottom boundary  to the unbounded region $\mathcal{R}_o$ of  $\Phi_*(\T)$.
\end{cor}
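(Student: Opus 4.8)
The plan is to read off from the $\EM$-diagram $\M^-$ exactly which arcs a length-minimizing path must cross, to show that this crossing pattern is forced, and then to use planarity to conclude that the path is unique up to isotopy. Throughout, $v$ denotes an isolated vertex of $\M^-$ on the bottom boundary line, $a$ its label, and $\mathcal{R}_o$ the region of $\Phi_*(\T)$ lying above $\M^-$.

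First I would identify the arcs of $\M^-$ that lie over $v$. Let $\EM^{(1)},\dots,\EM^{(d)}$ be the $\EM$-configurations of $\M^-$ one of whose two arcs covers $v$. By Lemma~\ref{lemma:isopos}, for each of them $v$ lies under $\EM_R$ if $a$ is in the first row of $\T^-$ and under $\EM_L$ if $a$ is in the second row of $\T^-$; in particular all of these arcs are of a single type, consistent with Corollary~\ref{lemma:LR}. Since, by Lemma~\ref{lemma:relative}, two $\EM$-configurations always occupy one of five non-crossing relative positions, any two arcs that both cover the single point $v$ are nested. Hence the arcs over $v$ form a chain $C_1\supset C_2\supset\cdots\supset C_d$ of half-disks (each bounded by its arc together with a segment of the boundary line), with $v\in C_d$; and $d$ equals the circle depth $d_{\mathsf c}$ of the face immediately above $v$.

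Next I would determine the crossing pattern of a shortest path $\gamma$ from $v$ to $\mathcal{R}_o$. Because $v$ lies inside each half-disk $C_i$ while $\mathcal{R}_o$ lies outside it, $\gamma$ must cross the arc $\partial C_i$ an odd number of times; and since $v$ and $\mathcal{R}_o$ both lie outside the half-disk bounded by any other arc of $\M^-$, $\gamma$ crosses each such arc an even number of times. Hence $\gamma$ crosses at least $d$ arcs, and by Proposition~\ref{depth} the minimum, $d_{\mathsf p}$, equals $d_{\mathsf c}=d$. Therefore a shortest path crosses each of $\partial C_1,\dots,\partial C_d$ exactly once and no other arc of $\M^-$, and, the $C_i$ being nested, it does so in the order $\partial C_d,\partial C_{d-1},\dots,\partial C_1$. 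This forces the sequence of faces of $\M^-$ through which $\gamma$ passes: it starts in the face $F_d$ just above $v$, which is bounded above by part of $\partial C_d$, and $F_{i-1}$ is the face reached from $F_i$ by crossing $\partial C_i$, ending with $F_0=\mathcal{R}_o$.

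Finally I would prove uniqueness up to isotopy. Every face of $\M^-$ is simply connected, so two subarcs of such paths that run through a common face are isotopic rel endpoints; it therefore remains only to check that the face sequence $F_d,\dots,F_0$ is itself uniquely determined and that $F_{i-1}$ is adjacent to $F_i$ along $\partial C_i$. For this I would argue, using that the $C_i$ all come from arcs of the same type and the explicit relative positions of Lemma~\ref{lemma:relative}, that immediately across $\partial C_i$ from $v$ the only arcs lying above are $\partial C_1,\dots,\partial C_{i-1}$, so that $\gamma$ lands in the face whose half-disk set is $\{C_1,\dots,C_{i-1}\}$, and that no further arc of $\M^-$ can separate this face from $\partial C_{i-1}$. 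I expect this last step---verifying from the local structure of nested $\EM$-configurations that the forced escape route is both realizable and unambiguous---to be the main obstacle; by contrast the parity count and the planarity of the arc diagram are routine. The same argument applied to an isolated vertex of $\M^+$ gives the corresponding statement on the top boundary, and together these are what make the choice of connecting paths in Step~10 well defined.
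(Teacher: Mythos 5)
Your argument is essentially the paper's: both proofs rest on the same facts --- that by Lemma~\ref{lemma:isopos}, Corollary~\ref{lemma:LR}, and Lemma~\ref{lemma:relative} the arcs of $\M^-$ covering an isolated vertex are all of one type and hence nested, and that by Proposition~\ref{depth} the minimal number of crossings equals the circle depth --- so the escape route is forced one arc at a time. Your parity count is a slightly more explicit way of seeing that a shortest path crosses exactly the nested arcs over $v$, once each and in order, and the face-adjacency verification you flag as the remaining obstacle is precisely the point the paper disposes of with the brief remark that there is ``only one choice of direction for moving.''
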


\begin{proof} Due to Proposition~\ref{depth}, there exists a
shortest path from $v$ to $\mathcal{R}_o$, and such a path is determined
by the circle depth of $v$. The path moves from one face of $\M^-$  to another so
that the circle depth is strictly decreased. Therefore, if there is
only one choice of direction for moving, the shortest path must be
unique.  There is only one choice by
Corollary~\ref{lemma:LR}, because there can be no isolated vertices
in the shaded regions of figures IV and V of Lemma \ref{lemma:relative},  since those regions lie inside
both a right semicircle and a left semicircle.
\end{proof}

Recall from \textbf{Step 9} in Section~\ref{sec:bijection} that $\{\alpha_1 < \dots < \alpha_\ell\}$ are the labels of the isolated vertices in $\M^+$,
$\{\beta_1 < \dots < \beta_\ell\}$ are the labels of the isolated vertices in $\M^-$, and
$\sigma$ is a  $3\,2\,1$-avoiding permutation on $\{1, \ldots, \ell\}$.
Let $\mathsf{i}:  \{\alpha_1 , \dots, \alpha_\ell\} \longrightarrow  \{1, \ldots, \ell\}$ and
${\mathsf j}:  \{\beta_1 , \dots , \beta_\ell\} \longrightarrow  \{1, \ldots, \ell\}$ be order-preserving bijections.

\begin{lemma}\label{lemma:nocrossing1}
Let $v_1$ and $v_2$ be two isolated vertices labeled by $a$ and $b$
respectively,  where $a<b$.   If  $v_1$ and $v_2$ satisfy either of the following conditions,
\begin{itemize}
\item[{\rm (i)}]   $v_1$ and $v_2$ are contained in a semicircle in $\M^+$ (resp. $\M^-$),
\item[{\rm (ii)}]  $v_1$ is inside $\EM^1_\LL$  and $v_2$ is inside $\EM^2_\R$  for some $\EM$-configurations $\EM^1$ and $\EM^2$ of $\M^+$ (resp. $\M^-$),
\end{itemize}
then $\sigma\left(\mathsf{i}(a)\right)<\sigma\left(\mathsf{i}(b)\right)$
(resp. $\sigma^{-1}\left(\mathsf{j}(a)\right) < \sigma^{-1}\left(\mathsf{j}(b)\right)$).
\end{lemma}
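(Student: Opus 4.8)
The statement to prove is Lemma~\ref{lemma:nocrossing1}: under condition (i) or (ii) on two isolated vertices $v_1, v_2$ labeled $a < b$, the $3\,2\,1$-avoiding permutation $\sigma$ satisfies $\sigma(\mathsf{i}(a)) < \sigma(\mathsf{i}(b))$ (and the dual statement with $\sigma^{-1}$ in $\M^-$). I would prove this by going back to the origin of $\sigma$ in \textbf{Step 9}: $\sigma$ is obtained by inverse RS from the pair $(\R,\Q)$, where $\R$ is obtained from $\T^+_\diamond$ by relabeling its entries $\alpha_1 < \cdots < \alpha_\ell$ as $1, \ldots, \ell$, and $\Q$ is obtained from $\T^-_\diamond$ similarly. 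The key is that $\mathsf{i}$ is exactly the relabeling map $\alpha_i \mapsto i$, so $\mathsf{i}(a)$ and $\mathsf{i}(b)$ are positions in $\R = $ the recording tableau, and one must track what ``$v_1, v_2$ in the same semicircle'' or ``$v_1$ in a left arc, $v_2$ in a right arc'' says about the \emph{positions in $\T^+$} of the isolated vertices $a, b$ — hence about where the corresponding boxes sit in $\R$.

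First I would establish the bridge between the geometry of $\M^+$ and the rows of $\T^+$, which is essentially already packaged in Lemma~\ref{lemma:isopos}: if $v$ is an isolated vertex inside $\EM_R$ then its label lies in row~1 of $\T^+$, and if inside $\EM_L$ then in row~2 of $\T^+$. Then for condition (i) — $v_1, v_2$ contained in a common semicircle — I would argue that both labels $a, b$ lie in the same row of $\T^+$ (being ``inside'' the left arc puts both in row 2, inside the right arc puts both in row 1, and inside the outer semicircle of an $\EM$-configuration requires a case check that still forces them into the same row, using Corollary~\ref{lemma:LR} to rule out the mixed possibility). Once $a$ and $b$ are in the same row of $\T^+$ with $a < b$, after Step~5 (part IV: shifting the remaining entries of $\T^+$ left to form $\T^+_\diamond$) and the relabeling, the boxes for $\mathsf{i}(a)$ and $\mathsf{i}(b)$ sit in the same row of $\R = \T^+_\diamond$(relabeled) with $\mathsf{i}(a)$ strictly to the left of $\mathsf{i}(b)$. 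For condition (ii) — $v_1 \in \EM^1_L$, $v_2 \in \EM^2_R$ — Lemma~\ref{lemma:isopos} puts $a$ in row~2 of $\T^+$ and $b$ in row~1 of $\T^+$; I would then show this forces, in $\T^+_\diamond$, the box of $\mathsf{i}(a)$ to lie in row~2 and the box of $\mathsf{i}(b)$ in row~1 — i.e. in $\R$ the value $\mathsf{i}(a)$ appears in a box strictly below that of $\mathsf{i}(b)$.

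The concluding step is purely a fact about RS and two-row shapes: if $\sigma$ corresponds to $(\R, \Q)$ under (the reversed) RS with $\R$ the recording tableau, then $\sigma(m) < \sigma(m')$ whenever $m$ and $m'$ sit in the same row of $\R$ with $m$ left of $m'$, \emph{or} whenever $m$ sits strictly below $m'$ in $\R$. Equivalently, reading the entries of $\R$ along its rows top-to-bottom, left-to-right within a row, recovers the sequence $\sigma(1), \sigma(2), \ldots$ in the order $1, 2, \ldots$ of positions — this is the standard characterization of how the recording tableau records left-to-right minima/insertions, and for a $3\,2\,1$-avoiding permutation (two-row shape) it is exactly the observation that the first row of $\R$ records the positions of left-to-right maxima of $\sigma^{-1}$, etc. Applying this with $m = \mathsf{i}(a)$, $m' = \mathsf{i}(b)$ gives $\sigma(\mathsf{i}(a)) < \sigma(\mathsf{i}(b))$ in both cases. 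The $\M^-$ statement follows by the symmetry of Step~9 — there the roles of insertion and recording tableau are swapped, so $\Q$ (built from $\T^-_\diamond$) plays the role of recording tableau for $\sigma^{-1}$, and the identical argument with $\M^-$, $\T^-$, $\mathsf{j}$, $\sigma^{-1}$ in place of $\M^+$, $\T^+$, $\mathsf{i}$, $\sigma$ applies verbatim.

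**Main obstacle.** I expect the delicate point to be condition (i) in the ``outer semicircle'' case: when $v_1$ and $v_2$ both lie under the large semicircle $\ell$--$i$--$j$ of a single $\EM$-configuration but not under either of its two constituent arcs $\EM_L, \EM_R$. There the bare application of Lemma~\ref{lemma:isopos} does not immediately apply, and one has to unwind what ``between the two small arcs but under the big one'' means for the positions of $a, b$ in $\T^+$, using the way Algorithm~\ref{alg:tym} reads $\T^+$ bottom-row-first and the defining inequalities $i < a, b$ (and $a, b$ not forced into row~2). Handling nested and crossing configurations uniformly — reducing every sub-case to ``$a, b$ in the same row'' or ``$a$ below $b$'' in $\T^+_\diamond$ — is where the real bookkeeping lies; the RS step at the end is standard and short once the tableau-position statements are in hand.
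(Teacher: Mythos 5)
There is a genuine gap in your concluding step. You reduce everything to the claim that for a $3\,2\,1$-avoiding $\sigma$ with recording tableau $\R$, one has $\sigma(m)<\sigma(m')$ whenever $m$ and $m'$ lie in the same row of $\R$ with $m$ to the left of $m'$ (or with $m$ strictly below $m'$). This is false for the first row: take $\sigma=3\,1\,2$, which is $3\,2\,1$-avoiding; its recording tableau has $1$ and $3$ in the first row and $2$ in the second, yet $\sigma(1)=3>2=\sigma(3)$. So ``both $a$ and $b$ lie in the first row of $\T^+_\diamond$'' does not by itself imply $\sigma(\mathsf{i}(a))<\sigma(\mathsf{i}(b))$, and the proposed factorization through a purely Robinson--Schensted statement cannot work for the sub-case of condition (i) in which $v_1,v_2$ both lie inside a right semicircle $\EM_R$. (The second-row sub-case and condition (ii) are fine and match the paper: there a bump at step $\mathsf{i}(a)$ produces $x<\mathsf{i}(a)$ with $\sigma(x)>\sigma(\mathsf{i}(a))$, so $\sigma(\mathsf{i}(a))>\sigma(\mathsf{i}(b))$ would create a $3\,2\,1$ pattern.)

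To close the gap you must re-use the geometry, as the paper does. If $a,b$ both lie inside $\EM_R$, so $j<a<b<\ell$ for the vertices $i<j<\ell$ of $\EM$, and $\sigma(\mathsf{i}(a))>\sigma(\mathsf{i}(b))$, then since neither insertion bumps, $\sigma(\mathsf{i}(a))$ must itself be bumped out of the first row at some step $x$ with $\mathsf{i}(a)<x<\mathsf{i}(b)$, which forces $\mathsf{i}^{-1}(x)$ into the second row of $\T^+_\diamond$. But $\mathsf{i}^{-1}(x)$ is an isolated vertex with $a<\mathsf{i}^{-1}(x)<b$, hence inside $\EM_R$, hence in the \emph{first} row of $\T^+$ by Lemma~\ref{lemma:isopos} (equivalently, by the way $j$ was chosen) --- a contradiction. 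Separately, the ``outer semicircle'' situation you flag as the main obstacle does not arise: an $\EM$-configuration consists only of the two arcs $\EM_L$ and $\EM_R$, so ``contained in a semicircle'' already means contained in one of these, and Lemma~\ref{lemma:isopos} applies directly. The real delicacy is the first-row case above, which your outline does not address and which your stated ``RS fact'' gets wrong.
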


\begin{proof} For both (i) and (ii), we  present the proof only for the case $\M^+$.

(i)  Assume  $v_1$ and $v_2$ lie  inside one semicircle.  Then  they are in either
$\EM_L$ or $\EM_R$ of  an $\EM$-configuration of $\M^+$. Therefore,  $a$ and $b$
must be on the same row of $\T^+_\diamond$.

Suppose that $a$ and $b$ are in the second row of $\T^+_\diamond$,  and
suppose on the contrary that $\sigma\left(\mathsf{i}(a)\right)>\sigma\left(\mathsf{i}(b)\right)$.
Consider the  RS correspondence applied to
$\sigma=\begin{pmatrix} 1 & \ldots & \ell \\ \sigma(1) & \ldots & \sigma(\ell) \end{pmatrix}$.
Since $a$ is in the second row of $\T^+_\diamond$, there is a bump when we insert $\sigma(\mathsf{i}(a))$. So there exists  an $x$ with  $x<\mathsf{i}(a)$, $\sigma(x) > \sigma\left(\mathsf{i}(a)\right)$.
Thus, we have $x<\mathsf{i}(a)<\mathsf{i}(b)$ and $\sigma(x)> \sigma\left(\mathsf{i}(a) \right) >\sigma(\left(\mathsf{i}(b)\right)$.
This contradicts the fact that
$\sigma$ is $3\,2\,1$-avoiding.

Suppose that $a$ and $b$ are in the first row of $\T^+_\diamond$,
that is,  $v_1$ and $v_2$ are in the right semicircle $\EM_R$. Let $i<j<k$ be the three vertex labels of $\EM$,  so that  $j<a<b<k$.
We again assume that $\sigma\left(\mathsf{i}(a)\right)>\sigma\left(\mathsf{i}(b)\right)$.
Note that, when we insert $\sigma\left(\mathsf{i}(a)\right)$,  there is no bump, since $a$ is in the first row of $\T^+_\diamond$. Then, since $b$ is in the first row, there is no bump when we  insert $\sigma\left(\mathsf{i}(b)\right)$ also. But $\sigma\left(\mathsf{i}(a)\right)>\sigma\left(\mathsf{i}(b)\right)$.
Hence, $\sigma\left(\mathsf{i}(a)\right)$ must be bumped down to second row before inserting $\sigma\left(\mathsf{i}(b)\right)$. Therefore, there exists $x$ with $\mathsf{i}(a)<x<\mathsf{i}(b)$ and $\mathsf{i}^{-1}(x)$ is in the second row of $\T^+_\diamond$. This is impossible by
the way $j$ was chosen since $j < x <k$.

(ii)   If  $v_1$ is inside $\EM^1_L$  and $v_2$ is inside
$\EM^2_R$,  then $a$ is in the second row of
$\T^+_\diamond$ and $b$ is in the first row of $\T^+_\diamond$.
Suppose on the contrary that $\sigma\left(\mathsf{i}(a)\right)>
\sigma\left(\mathsf{i}(b)\right)$. Since $a$ is in the second row, there is a bump when we insert $\sigma\left(\mathsf{i}(a)\right)$ while applying the RS correspondence to $\sigma$.
So there exists an $x$ with $x<\mathsf{i}(a)$ and $\sigma(x) > \sigma\left(\mathsf{i}(a)\right)$.
Then, $x <\mathsf{i}(a)<\mathsf{i}(b)$ and $\sigma(x)>\sigma\left(\mathsf{i}(a)\right)>\sigma\left(\mathsf{i}(b)\right)$.
This contradicts the  $3\,2\,1$-avoiding property of $\sigma$.  \end{proof}

\begin{lemma}\label{lemma:trivalizing} Let $R$ be a bounded region in $\Phi_\ast(\T)$,
where $\Phi_\ast (\T)$ is  as in \textbf{Step 10} of Section~\ref{sec:bijection}.
Suppose that there is a
point of crossing (which has not yet been trivalized)  on the boundary of $R$ such that a directed edge on the boundary is moving toward the point, and the other directed
edge on the boundary is moving away  from the point. Then when we
trivalize the crossing,  there will be one more boundary edge added
to the boundary of $R$.
\end{lemma}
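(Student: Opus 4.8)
The plan is to argue entirely locally at the crossing point $p$ and keep track of how $\partial R$ changes when $p$ is resolved. First I would fix notation. Before trivializing, $p$ is a $4$-valent point of the diagram $\Phi_\ast(\T)$, and the bounded region $R$ occupies exactly one of the four wedges at $p$; let $c_1$ and $c_2$ be the two cyclically consecutive half-edges that bound this wedge. By hypothesis one of $c_1,c_2$ is directed toward $p$ and the other away from $p$; say $c_1$ points in and $c_2$ points out. Recall from Section~\ref{sec:intro} that trivializing a crossing replaces the $4$-valent point by two trivalent vertices $u$ and $w$ joined by a single new edge $e$, with the four half-edges split two-and-two between $u$ and $w$ in one of the two planar ways, and that the split is forced by the requirement that the result again be an $\Atwo$-web, i.e. that each of $u,w$ be a sink or a source.

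The key step is to show that $c_1$ and $c_2$ are attached to \emph{different} vertices among $\{u,w\}$. Suppose not, say both attach to $u$. After the resolution the half-edge that pointed toward $p$ now points toward $u$ and the one that pointed away now points away from $u$, so $u$ is incident both to an in-edge and to an out-edge and hence is neither a sink nor a source, contradicting the fact that trivialization produces a valid $\Atwo$-web. (Implicit here is that a sink/source-consistent resolution does exist: each of the two strands crossing at $p$ is a consistently directed edge, so exactly one of its two half-edges points in; consequently the two in-directed half-edges at $p$ are cyclically adjacent, and grouping them at one vertex and the two out-directed half-edges at the other is a legitimate resolution, and in fact the only one.) Therefore, after relabeling, $c_1$ is attached to $u$ and $c_2$ to $w$, and in the planar picture of the resolution the wedge that was bounded by $c_1$ and $c_2$ at $p$ becomes the face bounded by $c_1$, the new edge $e=uw$, and $c_2$; that is, $e$ joins the two edges of $\partial R$ that previously met at $p$.

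Finally I would conclude: resolving $p$ leaves every edge of $\partial R$ other than $c_1$ and $c_2$ untouched, replaces the corner of $\partial R$ at $p$ by the detour along $c_1$, then $e$, then $c_2$, and introduces no other edge onto $\partial R$; hence $\partial R$ gains exactly one edge, namely $e$. I expect the only real subtlety — the main obstacle — to be the orientation bookkeeping at the crossing: one must check that at $p$ the in- and out-directed half-edges are distributed so that a sink/source-consistent resolution exists and is unique, and, equally, that the hypothesis ``one in and one out among $c_1,c_2$'' is precisely the condition that forces the resolution to separate $c_1$ and $c_2$, whereas the complementary cases ``both in'' or ``both out'' keep $c_1$ and $c_2$ at a common vertex and add no edge to $\partial R$. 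This amounts to a short case check over the in/out patterns at $p$, which collapses to essentially one case by the cyclic symmetry at $p$ together with the constraint coming from the two crossing strands.
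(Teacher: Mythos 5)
Your proposal is correct and is essentially the paper's argument: the authors simply declare the lemma ``clear from the definition of trivalizing'' and display the local picture, while you have written out the underlying reason, namely that the sink/source constraint forces the two in-directed half-edges (which are cyclically adjacent at a transversal crossing of directed strands) to one new trivalent vertex and the two out-directed half-edges to the other, so a wedge bounded by one in- and one out-half-edge acquires the new connecting edge on its boundary. Nothing in your write-up deviates from, or adds a gap to, what the paper intends.
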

\begin{proof}
This is clear from the definition of  trivalizing:
\begin{equation}\label{eq:addaline}
 \grim{0.55}{bound1.pdf} \; \longrightarrow \; \grim{0.55}{bound2.pdf}\,.
\end{equation}
\end{proof}

The following proposition completes the proof that $\Phi(\T)$ in Theorem~\ref{thm:main} is a nonelliptic $\Atwo$-web.

\begin{prop}
For a semistandard tableau $\T$ of shape $(3^n)$ and type
$\{1^2,\ldots, k^2, k+1, \ldots, 3n-k\}$, the $\Atwo$-web $\Phi(\T)$ is
\emph{nonelliptic}.
\end{prop}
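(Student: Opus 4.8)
The plan is to show that every interior face of $\Phi(\T)$ is bounded by at least six edges, arguing face-by-face according to where the face sits relative to the three ingredients of the construction: the web $\W^+$ coming from $\M^+$, the web $\W^-$ coming from $\M^-$, and the connecting paths drawn in Step 10. First I would observe that $\Phi(\T)$ is assembled from pieces we already understand: Proposition~\ref{depth} tells us that in $\W^+$ and in $\W^-$ separately the circle depth equals the path depth, and Tymoczko's result (the $k=0$ case, which is exactly Algorithm~\ref{alg:tym} applied to a three-rowed standard tableau) guarantees that each of $\W^+$ and $\W^-$ is nonelliptic on its own. So every face that lies entirely inside the ``$\W^+$ part'' or entirely inside the ``$\W^-$ part'' of $\Phi(\T)$ already has at least six bounding edges, and these edges are not destroyed by the subsequent steps. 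The faces that require genuine attention are the new ones: those created inside the open region $\mathcal{R}_o$ by the connecting paths of Step 10, and those along the interface between $\mathcal{R}_o$ and the interiors of $\W^\pm$, which get modified when a path crosses arcs of $\M^+$ or $\M^-$ and those crossings are trivalized.

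Next I would set up the bookkeeping for the connecting paths. By Step 10 each path from $\alpha_i$ to $\beta_{\sigma(i)}$ is chosen to cross the minimal number of arcs of $\W^+$ and $\W^-$; uniqueness of such minimal paths (Corollary after Lemma~\ref{lemma:isopos}, via Corollary~\ref{lemma:LR}) means the paths are combinatorially well-defined, and Lemma~\ref{lemma:nocrossing1} controls how two paths interact — crucially, since $\sigma$ is $3\,2\,1$-avoiding, paths originating inside a common semicircle, or with one inside an $\EM^1_L$ and the other inside an $\EM^2_R$, do not cross each other. This is what prevents small ``bigon'' or ``square'' faces from forming between two connecting strands. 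The key mechanism for counting edges is Lemma~\ref{lemma:trivalizing}: every time a connecting path crosses an arc and we trivalize that crossing, one extra boundary edge is added to each of the two regions adjacent to that crossing along the path. So a face in $\mathcal{R}_o$ that, before trivalization, is bounded by some connecting strands and some arc-segments picks up one new edge per trivalized crossing on its boundary, and I would track these increments carefully to push the edge count up to six.

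The heart of the argument is therefore a case analysis on the shape of a bounded face $R$ of $\Phi_\ast(\T)$ lying in or adjacent to $\mathcal{R}_o$, classified by how many connecting strands bound it and how those strands sit relative to the arcs of $\M^+$ and $\M^-$. For a face bounded by portions of two connecting paths together with segments of arcs above and below, I would argue: because each path is a minimal crossing path, consecutive crossings along a path alternate in a controlled way, and between two consecutive crossings the path separates two faces each of which inherits an old edge from $\W^+$ or $\W^-$ plus the new edges from trivalizing the two crossings. Combining the contributions from the top side ($\W^+$), the bottom side ($\W^-$), the two strand-segments, and the at least two trivalized crossings on each relevant side gives the needed count $\ge 6$. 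The isolated-vertex configurations at the ends of the strands (which, by Lemma~\ref{lemma:isopos}, force the strand to emanate into the first or second row region) have to be checked as base cases: a face adjacent to an isolated vertex $\alpha_i$ where the $\EM$-configuration around it contributes its own arcs, again yielding at least six edges after trivalization. I would also separately handle the degenerate-looking cases where $\ell$ is small (few or no connecting strands), where the face is essentially a face of $\W^+$ or $\W^-$ slightly deformed, and these reduce immediately to Tymoczko's nonelliptic theorem.

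The main obstacle I anticipate is the interface case: a bounded face whose boundary mixes an arc-segment of $\M^+$, an arc-segment of $\M^-$, and one or two connecting strands, where the minimality of the paths and the $3\,2\,1$-avoiding property of $\sigma$ must be combined just right to rule out a four-sided or (after trivialization) five-sided face. The subtlety is that ``minimal number of crossings'' is a global condition on each path, but nonellipticity is a local condition on each face, so I would need a lemma translating global path-minimality into a local statement — roughly, that a connecting path never enters and immediately exits the same small region, i.e.\ it cannot cross two arcs of the same $\EM$-configuration consecutively, which would follow from Lemma~\ref{lemma:relative} together with Corollary~\ref{lemma:LR}. Making this local translation airtight, and then confirming that in every surviving configuration the trivialization of crossings (Lemma~\ref{lemma:trivalizing}) supplies enough extra edges, is where the real work lies; the remaining cases are routine once this is in hand.
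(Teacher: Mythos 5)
Your overall strategy---a case analysis on the bounded faces of $\Phi_\ast(\T)$, using Lemma~\ref{lemma:nocrossing1} to rule out small faces between two connecting strands, Corollary~\ref{lemma:LR} and Lemma~\ref{lemma:relative} to control how strands meet $\EM$-configurations, and Lemma~\ref{lemma:trivalizing} to harvest extra edges at trivalized crossings---is the same as the paper's. But there is one missing idea that your plan cannot do without: the parity observation that in the trivalent web $\Phi(\T)$ every internal face is bounded by an \emph{even} number of edges, because sources and sinks alternate around its boundary. This is what lets the paper restrict attention to faces of $\Phi_\ast(\T)$ with only three or four sides and then be content with producing a \emph{single} additional edge via trivialization: a quadrilateral that gains one edge becomes a pentagon, and parity promotes it to a hexagon. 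You instead propose to ``track these increments carefully to push the edge count up to six'' directly, and in several of the actual configurations that fails: for instance, a four-sided region in $\mathcal{R}_o$ bounded entirely by vertical strands, or the $(\R,\R)$ and $(\LL,\LL)$ interface configurations, each yield only \emph{one} crossing to which Lemma~\ref{lemma:trivalizing} applies, so the honest count stalls at five. Without the parity argument those cases are genuinely open in your proof.

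A secondary soft spot: your opening claim that every face lying entirely inside the $\W^+$ or $\W^-$ part ``already has at least six bounding edges, and these edges are not destroyed by the subsequent steps'' is not quite right as stated, since a connecting strand can pass through the interior of $\W^\pm$, crossing two semicircles of an $\EM$-configuration and thereby \emph{subdividing} a face of $\W^\pm$ into smaller pieces. You do gesture at this later (``faces along the interface\dots get modified''), but it needs to be treated as its own case---this is the paper's Case~1, where one shows the two semicircles crossed cannot intersect each other (Corollary~\ref{lemma:LR} plus Lemma~\ref{lemma:relative}), forcing a fourth boundary arc, and then again invokes trivialization plus parity. Your worry about translating global path-minimality into a local statement is legitimate but is resolved exactly by the uniqueness of shortest paths (the corollary following Lemma~\ref{lemma:isopos}), so that part of your plan is sound; the parity gap is the one that must be filled.
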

\begin{proof}
Let $R$ be a bounded,  internal  region of $\Phi_\ast(\T)$, and let
$\mathcal{R}_o$ be the open region between the $\Atwo$-webs  $\W^+$ and $\W^-$ of
$\Phi_\ast(\T)$,  as in \textbf{Step 10} of Section~\ref{sec:bijection}.
After trivalization has taken place,  internal regions  are bounded  by an even number of edges, since \emph{sink} and \emph{source} vertices alternate along the boundary of internal faces. Hence,  if $R$ has  more than four boundary edges in $\Phi_\ast(\T)$, then after trivalization,  $R$ is bounded by at least six edges.  So we may restrict our attention to
the case that $R$ is bounded by either  three or four edges.
We know from the work of Tymoczko \cite{T}, that $\W^+$ and
$\W^-$ (as $\Atwo$-webs) have no internal regions with four edges.
Therefore, we may suppose that $R$ has at least one edge coming
from a vertical line.  At this juncture, we  break our considerations into several cases.
\medskip

\verb"Case 1: " {\it $R$ is not contained in $\mathcal{R}_o$}

We treat only the $\W^-$ case, since every
argument can be translated to $\W^+$ by reversing the arrows on
the directed edges.

Since there are no crossings between vertical lines outside
$\mathcal{R}_o$, there must be a part of  the vertical line passing two
semicircular arcs  of $\W^-$; let them be $\EM^1_\bullet$ and
$\EM^2_\bullet$ so that the vertical  line passes $\EM^1$  first in going from bottom to top,
where $\bullet$ is either $\mathsf{R}$ or $\mathsf{L}$. By Corollary~\ref{lemma:LR} and
Lemma~\ref{lemma:relative}, we see that $\EM^1_\bullet$ and
$\EM^2_\bullet$ do not intersect each other. $$
\grim{.6}{region1.pdf}  \qquad \grim{.6}{region2.pdf}
$$
Hence,  there must be another arc (part of a vertical line or a semicircle).
Since the direction of every arc in $\W^-$ is upward,  before
trivalization  there will be at least one point (of crossing) such
that one edge on the boundary of $R$ is moving from the point while
the other is toward the point. This crossing point is indicated by  $\mathsf{C}$ in the illustrations above. Therefore, as in \eqref{eq:addaline}, at least one more boundary edge will be added to $R$ upon trivalizing, so there
will be at least six edges on the boundary of $R$ in $\Phi(\T)$.

\medskip

\verb"Case 2: " {\it  $R$ is contained in $\mathcal{R}_o$.}
\begin{description}
\item $\bullet$ The boundary edges of $R$ are all from  vertical lines: \\
Since the permutation $\sigma$ in \textbf{Step 9} is $3\,2\,1$-avoiding, there cannot  be a
    bounded region with three sides.  But when  $R$ is bounded by four edges,  then since every vertical line moves upward,   trivalizing crossings will add at
    least one more edge (see \eqref{eq:addaline}).

\item $\bullet$  There are boundary edges of $R$ from just one of the webs $\W^+$ or $\W^-$: \\
     Assume that $R$ has an boundary edge  $\mathfrak a$  from $\W^+$.   Then
    $\mathfrak{a}$ must be a connected arc.    If $\mathfrak{a}$   is a part of a semicircle,  then there must be two vertical lines  on the boundary of $R$ that  pass through the semicircle. By Lemma~\ref{lemma:nocrossing1}, these two vertical lines do not make a crossing,  and there must be at least one
    more vertical line on the boundary of $R$.
 If there are at least two more vertical lines,  then $R$ will clearly have
at least five edges  (four from the four vertical lines and the edge $\mathfrak{a}$), and after trivalizing  the crossings, $R$ has  at least six edges.
 $$
 \grim{.6}{case2.pdf}
 $$

    If there is only  one more vertical line on the boundary of
    $R$, then since every line is moving upward, the crossings of
    two lines on the boundary of $R$ will make another edge when we
   trivalize the crossings, due to Lemma~\ref{lemma:trivalizing}.
 $$
 \grim{.6}{case21.pdf}
 $$

    Now we consider the case that there are at least two semicircles whose subarcs are in
    $\mathfrak{a}$.  Assume  $\ell_1$ (resp. $\ell_2$) is a  vertical line  on the boundary of $R$ which is  connected to  a top vertex,  
    and let  $\EM^1_*$ (resp. $\EM^2_\bullet$)  be the  semicircle that $\ell_1$ (resp. $\ell_2$)  passes through.
    If the pair $(*,\bullet)$ is $(\LL,\R)$, then Lemma~\ref{lemma:nocrossing1}  implies that
    $\ell_1$ and $\ell_2$ do not cross each other. Therefore, there must be
   another vertical line on the boundary of $R$,  and by Lemma~\ref{lemma:trivalizing}, there will be at least six
   edges on the boundary of $R$ after trivalizing the crossings.
    $$
    \grim{.6}{lemma1.pdf}
    $$

  When $(*,\bullet)$ is $(\R,\R)$  as depicted  in (I) below,  trivalizing the intersection point of $\EM^1_\R$ and $\ell_1$
   will produce one more edge on the boundary of $R$ by Lemma~\ref{lemma:trivalizing}.
   When $(*,\bullet)$ is $(\R,\LL)$  as in (II),  trivalizing the crossing of $\EM^1_\R$ and $\ell_1$
   and of $\EM^2_\LL$ and $\ell_2$ will produce two more edges on the boundary of $R$.
   Finally,  when  $(*,\bullet)$ is $(\LL,\LL)$ as in (III), trivalizing the crossing point of $\EM^2_\LL$ and $\ell_2$ will produce one more edge on the boundary of $R$.

\begin{figure}[ht]
\captionsetup[subfigure]{labelformat=empty}
\centering
\begin{subfigure}[b]{0.33\textwidth}
\centering
$\grim{.6}{lemma3.pdf}$
\caption{(I)}
 \label{fig:case a}
 \end{subfigure}%
\begin{subfigure}[b]{0.33\textwidth}
\centering
$\grim{.6}{lemma2.pdf}$
\caption{(II)}
 \label{fig:case b}
 \end{subfigure}%
\begin{subfigure}[b]{0.33\textwidth}
\centering
$\grim{.6}{lemma4.pdf}$
\caption{(III)}
\label{fig:case c}
\end{subfigure}%
\end{figure}

 \item $\bullet$ There are boundary edges of $R$ from both $\W^+$ and $\W^-$: \\
    There must be two noncrossing vertical lines on the boundary of $R$.
    Therefore, if at least two semicircles appear on the boundary on
    $\W^+$ or $\W^-$,  then $R$ is not a  quadrilateral.   Hence, we only have to
   treat  the case when just one semicircle from each of   the webs $\W^+$
    and $\W^-$ appears on the boundary:   It is easy to check that there
    are at least two  crossing points  between vertical lines and
    semicircles where we can apply Lemma \ref{lemma:trivalizing},  so
    that at least two more edges will be added after trivalization.
\end{description}   \end{proof}

\section{$3\,2\,1$-avoiding permutations} \label{sec:combinatorics}
In Section~\ref{sec:inverse},
we will use the path depth  of Definition~\ref{def:circle} to  create a pair of standard tableaux from an $\Atwo$-web,  and this will  enable us to show that the map $\Phi$ in Section~\ref{sec:bijection} is a bijection.
The algorithm that constructed the $\Atwo$-web  $\Phi(\T)$ from a semistandard tableau $\T$  produced a
$3\,2\,1$-avoiding permutation via the  RS correspondence in Step 9.   In this section,  we construct a pair of standard tableaux from a $3\,2\,1$-avoiding permutation using  the path depth of a permutation  and show that this pair is exactly the same as the one coming from  the RS correspondence.

The $3\,2\,1$-avoiding permutations in $\SF_\ell$  and the Temperley-Lieb diagrams with $2\ell$ vertices  are both counted by the Catalan number $\frac{1}{\ell+1}\binom{2\ell}{\ell}$ (see for example, \cite[Chap.~6]{St}).   In Remark~\ref{rem:tlbijection}, we show
how the considerations here give a direct bijection between the set
 of $3\,2\,1$-avoiding permutations and the set of  Temperley-Lieb diagrams.    Bijections using the reduced
 expressions for Temperley-Lieb diagrams as words in the  generators are known and can be found for example
 in  \cite{BJS}.

\emph{For the remainder of the section,  we assume that $\sigma \in \SF_\ell$ is a $3\,2\,1$-avoiding permutation.}

\begin{lemma} \label{lem:s5l1}
Assume $1 \le s<t \le \ell$. \begin{itemize}
\item[{\rm (i)}]   If  $\sigma(s) \ge s$ and $\sigma(t) \ge t$, then  $\sigma(s) < \sigma(t)$.
\item[{\rm (ii)}]  If  $\sigma(s) \le s$ and $\sigma(t) \le t$, then  $\sigma(s) < \sigma(t)$.
\item[{\rm (iii)}]  If  $\sigma(s)=s$, then $\sigma(i) <s$ for all $i <s$ and $\sigma(j) >s$ for all $j >s$.
\end{itemize}
\end{lemma}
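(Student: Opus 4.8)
The plan is to establish (i) first by exhibiting a forbidden $3\,2\,1$ pattern whenever it would fail, then to derive (ii) from (i) by a symmetry, and finally to read off (iii) from (i) and (ii).

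For (i), assume $s<t$, $\sigma(s)\ge s$, $\sigma(t)\ge t$, and suppose toward a contradiction that $\sigma(s)>\sigma(t)$. Since $\sigma(t)\ge t>s$, this forces $\sigma(s)>t$. Now $\sigma$ restricted to the positions $\{1,\dots,t\}$ is injective with image of size $t$, and that image contains $\sigma(s)>t$; hence at most $t-1$ of the numbers $1,\dots,t$ occur among $\sigma(1),\dots,\sigma(t)$, so some $v\in\{1,\dots,t\}$ equals $\sigma(j)$ for a position $j>t$. A brief case split --- $v<t$ (so $\sigma(j)=v<t\le\sigma(t)$), versus $v=t$ (which forces $\sigma(t)>t$, since $\sigma(t)\ne\sigma(j)$) --- shows $\sigma(j)<\sigma(t)$ in all cases. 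Then $s<t<j$ with $\sigma(s)>\sigma(t)>\sigma(j)$ contradicts $3\,2\,1$-avoidance, so $\sigma(s)<\sigma(t)$.

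For (ii), I would apply (i) to the reverse--complement $\bar\sigma(i):=\ell+1-\sigma(\ell+1-i)$, which is again $3\,2\,1$-avoiding. The substitution $x\mapsto\ell+1-x$ converts $\sigma(x)\le x$ into $\bar\sigma(\ell+1-x)\ge\ell+1-x$ and $s<t$ into $\ell+1-t<\ell+1-s$, so (i) for $\bar\sigma$ yields $\bar\sigma(\ell+1-t)<\bar\sigma(\ell+1-s)$, which unwinds to exactly $\sigma(s)<\sigma(t)$. (If one prefers to avoid the symmetry, the same pigeonhole run on the block $\{1,\dots,\sigma(s)\}$ of small values produces a position $i<s$ with $\sigma(i)>\sigma(s)$, giving the pattern $\sigma(i)>\sigma(s)>\sigma(t)$ at $i<s<t$.) For (iii), let $\sigma(s)=s$. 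If $i<s$, then either $\sigma(i)<i<s$, or $\sigma(i)\ge i$ and part (i) applied to the pair $i<s$ (using $\sigma(s)=s\ge s$) gives $\sigma(i)<\sigma(s)=s$; either way $\sigma(i)<s$. Symmetrically, if $j>s$, then either $\sigma(j)>j>s$, or $\sigma(j)\le j$ and part (ii) applied to the pair $s<j$ gives $\sigma(j)>\sigma(s)=s$; either way $\sigma(j)>s$.

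I expect the only genuinely delicate point to be the extraction step in (i): one must ensure that the value obtained at the position $j>t$ is strictly below $\sigma(t)$, not merely $\le t$, which is precisely where the case $\sigma(t)=t$ has to be dispatched separately. Everything else is routine bookkeeping with the $3\,2\,1$-avoiding hypothesis.
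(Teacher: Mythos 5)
Your proof is correct and follows essentially the same route as the paper's: assume $\sigma(s)>\sigma(t)$, use a counting/pigeonhole argument to produce a position $j>t$ with $\sigma(j)<\sigma(t)$, and conclude via the forbidden $3\,2\,1$ pattern at $s<t<j$; then (ii) by symmetry and (iii) from (i) and (ii). Your write-up is in fact a bit more careful than the paper's (explicitly handling the edge case $\sigma(t)=t$ and packaging (ii) as a reverse--complement of (i)), but these are refinements of the same argument, not a different one.
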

\begin{proof}
For (i),  assume the contrary that  $\sigma(s) > \sigma(t)\ge t$.
If  for all $i>t$, $\sigma(i)  > \sigma(t)$,  then $\sigma(i)>t$ since $\sigma(t)\ge t$.
Then $\sigma$ is not bijective,  since $\sigma(s) > \sigma(t)\ge t$ and $s<t$.
Thus, for some $i >t > s$, we have $\sigma(i) < \sigma(t) < \sigma(s)$, which contradicts the fact that $\sigma$ is  $3\,2\,1$-avoiding.    Part (ii) can be done by a similar argument,   and (iii) follows from (i) and (ii).
\end{proof}

We associate to $\sigma$ a diagram with vertices numbered $1,\dots, \ell$ from left to right
on the top and on the bottom boundary lines,  and with vertex $i$ on top connected to
vertex $\sigma(i)$ on the bottom  by a vertical line.
The  vertical lines divide the region between the top and bottom boundaries into \emph{faces}.
Next we introduce an analog of the path depth given in Definition~\ref{def:circle} for the diagram of $\sigma$.

\begin{definition}\label{def:path2}{\rm (See \cite{KK}.)}
Let $f_\infty$ be the unbounded face open to left of the diagram of $\sigma$.
For  any point $x$ in the interior of a face of the diagram,  the {\em path depth} of $x$, denoted $d^{\mathsf p}(x,\sigma)$, is the minimal number of edges crossed by any path from $x$ to $f_\infty$ that does not intersect  the top or bottom boundaries.
\end{definition}
Note that when a path from $x$ to $f_\infty$   crosses horizontally at the intersection of  two
lines in the diagram,  it is counted as a single edge crossing.
All points in the interior of a face $f$  have the same path depth, so  we set
$ d_{\sigma}(f):=d^{\mathsf p}(x, \sigma)$ for any point  $x$ in $f$.

For a $3\,2\,1$-avoiding permutation $\sigma$, we define  the path depth difference $\delta_\sigma(v)$
of a boundary vertex $v$  to be
\begin{equation}\label{eq:ell}
\delta_\sigma(v)= d_{\sigma}(f_{v}^r)-d_{\sigma}(f_{v}^l),
\end{equation}
where $f_{v}^l$  is the face to the left of $v$  and  $f_{v}^r$ is  the face to the right of   $v$.   In  \eqref{eq:depthconf} below, we will show that $\delta_\sigma(v)$ is always either $0$ and $1$ for every boundary vertex $v$.

\begin{example}\label{exam:perm}
For $\sigma=
\begin{pmatrix}
1&2&3&4&5&6&7&8&9&10&11&12 \\
4&1&6&2&3&7&8&9&12&5&10&11
\end{pmatrix}
$,
the path depths of the internal faces are displayed here.   Also when
 $\delta_\sigma(v) = 1$ (resp. $=0$)  for a boundary vertex $v$ we write $\delta_\sigma = 1$
 (resp. $\delta_\sigma = 0$).
  $$
\grim{0.8}{perm321pavoid.pdf}.
$$
\end{example}

To distinguish the top and bottom vertices, we  denote the $i$th  vertex on the top  (\emph{resp. bottom}) boundary  of the diagram of $\sigma$  by $t_i$ (\emph{resp. $b_i$}).
Using the values $\delta_\sigma(t_i)$  and $\delta_\sigma(b_i)$, we create  standard tableaux $\SF^+$ and  $\SF^-$ of at most two rows by assigning the position of $i$ as follows:
\begin{definition} \label{def:inverse}  For $i = 1,\dots, \ell$,
\begin{itemize}
\item put $i$ in the first row of $\SF^+$  (\emph{of $\SF^-$}) if $\delta_\sigma(t_i)=1$ (\emph{if $\delta_\sigma(b_i)=1$});
\item put $i$ in the second row of $\SF^+$  (\emph{of $\SF^-$}) if $\delta_\sigma(t_i)=0$  (\emph{if $\delta_\sigma(b_i)=0$});
\item let $\Psi(\sigma) = (\SF^+, \SF^-)$.
\end{itemize}
\end{definition}

Now we assume $\sigma$  corresponds  to a pair of standard tableaux $(\R,\Q)$ by the RS correspondence,  where $\R$ is the recording tableau and $\Q$ is the insertion tableau. Note that the permutation $\sigma$ in Example~\ref{exam:perm} corresponds to
$\left(\; \grim{0.8}{tabR.pdf}\; \; , \;\; \grim{0.8}{tabQ.pdf} \; \right)$ by the RS correspondence.

The following is the main theorem of this section,  which we prove after a sequence of results.
\begin{thm}\label{thm:RSpairs} Let $\sigma$ be a $3\,2\,1$-avoiding permutation,  and assume  $\Psi(\sigma) = (\SF^+, \SF^-)$ as in Definition \ref {def:inverse}.
Suppose  $\sigma$ corresponds to $(\R,\Q)$ under the RS correspondence.  Then $\SF^+=\R$ and $\SF^-=\Q$.
\end{thm}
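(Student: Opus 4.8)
The plan is to analyze how the path depth difference $\delta_\sigma(v)$ at a boundary vertex is controlled by whether the vertical line attached to $v$ goes ``up-right'' or ``down-left'', and to match this precisely against the insertion/bumping behavior of the RS algorithm. First I would establish the basic structural fact: for a $3\,2\,1$-avoiding $\sigma$, the line through $t_i$ passes \emph{underneath} a face of higher path depth on one side and lower on the other, and the sign of $\delta_\sigma(t_i)$ is determined by the local slope of the line at $t_i$ (and symmetrically for $b_i$). Using Lemma~\ref{lem:s5l1}, I would argue that crossings are ``coherent'': among the lines crossing a given vertical sweep, the ones that have already passed below a point and the ones still above it are nested in a controlled way, so that the path depth equals a simple count. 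Concretely I expect to show $d_\sigma(f_{t_i}^r) - d_\sigma(f_{t_i}^l) = 1$ exactly when there is no $j>i$ with $\sigma(j) < \sigma(i)$ ``nested around'' $t_i$ in the relevant sense — equivalently, exactly when $i$ is \emph{not} bumped in the recording tableau sense, i.e. when $i$ ends the first row of $\R$ at its insertion step. This gives formula~\eqref{eq:depthconf} referenced in the text and simultaneously shows $\delta_\sigma(v)\in\{0,1\}$.

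The key bridge is the following dictionary, which I would prove by induction on $\ell$ (or on the insertion steps of the RS algorithm applied to $\sigma$ read as a word $\sigma(1)\cdots\sigma(\ell)$): inserting $\sigma(i)$ creates a \emph{new box in the first row} of $\Q$ precisely when $\sigma(i)$ is larger than all previously inserted values lying in the first row — which, for $3\,2\,1$-avoiding $\sigma$, happens precisely when $\sigma(i)\ge i'$ for the appropriate threshold, and by Lemma~\ref{lem:s5l1}(i)–(ii) this can be read off from the local geometry at $b_{\sigma(i)}$. When the new box is in the first row, the recording tableau $\R$ gets its new box in the first row at position $i$, so $i$ lands in row $1$ of $\R$; when there is a bump to row $2$, both $\R$ and $\Q$ get their new box in row $2$. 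Matching this with Definition~\ref{def:inverse}: I claim $\delta_\sigma(t_i)=1 \iff$ step $i$ produces a first-row box $\iff i$ is in row $1$ of $\R$, and $\delta_\sigma(b_m)=1 \iff m$ is in row $1$ of $\Q$, where $m=\sigma(i)$. The two-rowed shape condition (guaranteed because $\sigma$ is $3\,2\,1$-avoiding, as noted in the text) ensures there are never more than two rows, so ``not in row $1$'' means ``in row $2$'', and the placement is forced by the standardness of the tableaux together with the fact that within each row the entries increase left to right — which follows because $\delta_\sigma=1$ positions on the top (resp. bottom) are encountered in increasing order of $i$ (resp. of $\sigma(i)$).

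I would organize the write-up as: (1) a lemma computing $d_\sigma(f)$ for every face explicitly, yielding \eqref{eq:depthconf} and the fact that $\delta_\sigma(v)\in\{0,1\}$; (2) a lemma translating ``$\delta_\sigma(t_i)=1$'' into the RS-insertion statement ``at step $i$, $\sigma(i)$ is appended to the end of the first row of $\Q$ without a bump'', using part (i)–(ii) of Lemma~\ref{lem:s5l1} to handle the geometry, and the dual statement for $\delta_\sigma(b_m)=1$ reading the diagram from the bottom (which corresponds to applying RS to $\sigma^{-1}$, whose recording tableau is $\Q$ — this is why the recorded–inserted roles swap, consistent with the paper's convention $\sigma \leftrightarrow (\R,\Q)$); (3) conclude by comparing Definition~\ref{def:inverse} with the positions of $1,\dots,\ell$ in $\R$ and $\Q$. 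The main obstacle I anticipate is step (2): carefully proving that the path depth at a boundary vertex genuinely counts exactly the quantity that governs whether a bump occurs, i.e. controlling the global nesting/crossing structure of the vertical lines well enough to equate a depth with a bump-count. This is where the $3\,2\,1$-avoiding hypothesis does all the work, via Lemma~\ref{lem:s5l1}, and where one must be careful that a horizontal crossing at a double point counts as a single edge (as the text flags right after Definition~\ref{def:path2}). Everything else — the induction on insertion steps, the two-row bookkeeping, the final identification — should be routine once the depth/bump equivalence is nailed down.
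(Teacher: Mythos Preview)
Your high-level architecture matches the paper's: reduce everything to showing $\SF^+=\R$, then obtain $\SF^-=\Q$ from the $\sigma\leftrightarrow\sigma^{-1}$ symmetry of RS (the diagram of $\sigma^{-1}$ is the diagram of $\sigma$ reflected top-to-bottom, so $\Psi(\sigma^{-1})=(\SF^-,\SF^+)$). But the mechanism the paper uses for the core equivalence ``$\delta_\sigma(t_i)=1 \iff i$ lies in row~1 of $\R$'' is quite different from your depth-computation-plus-induction plan. The paper partitions $\{1,\dots,\ell\}$ into $\mathcal{R}=\{i:\sigma(i)>i\}$, $\mathcal{F}=\{i:\sigma(i)=i\}$, $\mathcal{L}=\{i:\sigma(i)<i\}$. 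For $i\in\mathcal{F}\cup\mathcal{R}$ both sides are easy (Lemmas~\ref{lem:s5rs} and~\ref{lem:ellcond1}). For the hard case $i\in\mathcal{L}$, the paper does \emph{not} compute depths or induct on insertion steps; instead it traces a path from $t_i$ through the diagram, making prescribed turns at crossings, and shows that (a) the value of $\delta_\sigma$ flips between $0$ and $1$ at each turn, so the terminal boundary (top or bottom) determines $\delta_\sigma(t_i)$ (Corollary~\ref{C:delsig}); and (b) whether $i$ is part of an ``RS pair'' $(r,i)$ with $\sigma(i)$ bumping $\sigma(r)$ forces a specific alternation pattern of turns, hence a specific terminal boundary. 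This is collected in Lemma~\ref{lem:c2c2}.

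Your alternative route is plausible in principle but, as you yourself note, the step you flag as the obstacle is the entire content, and you have not actually said how to carry it out. Two concrete issues: first, your tentative criterion ``no $j>i$ with $\sigma(j)<\sigma(i)$ nested around $t_i$'' has the inequalities reversed for the RS side --- $i$ goes to row~2 of $\R$ precisely when $\sigma(i)$ \emph{bumps}, i.e.\ when some $j<i$ has $\sigma(j)>\sigma(i)$ with $\sigma(j)$ still in row~1 --- so the dictionary needs to be stated correctly before it can be proved. Second, an induction on insertion steps requires controlling how adding the line $t_\ell\to b_{\sigma(\ell)}$ changes the path depths of \emph{existing} faces, which is delicate because the new line creates crossings at which two edges can be traversed for cost~1; you would in effect need an explicit inversion-count formula for $d_\sigma(f)$. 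That can be done, but the paper's turn-following argument bypasses it entirely and is what you should compare against.
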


\begin{definition} \label{def:LRFsets}
Let $\mathcal{L}$, $\mathcal{F}$,$\mathcal{R}$  be the subsets of  all $i \in \{1, \ldots, \ell\}$ such that
$$
\mathcal{L}=\{i \mid \sigma(i) < i\},  \quad
\mathcal{F}=\{i \mid  \sigma(i) =  i\}, \quad
\mathcal{R}=\{i \mid \sigma(i) > i\}.
$$
\end{definition}

  \begin{lemma}\label{lem:s5rs} \hfil  \begin{itemize}
\item[{\rm (i)}]  If  $r,s\in \mathcal{F}\cup \mathcal{R}$ (or  if $r,s \in  \mathcal{F} \cup \mathcal L$)  with $r<s$, then $\sigma(r) < \sigma(s)$.
\item[{\rm (ii)}]  If $s \in \mathcal{F} \cup \mathcal{R}$, then $s$ is in the first row of the recording tableau $\R$ of $\sigma$.
\end{itemize}
 \end{lemma}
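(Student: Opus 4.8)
\textbf{Proof proposal for Lemma~\ref{lem:s5rs}.}

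The plan is to prove (i) first as a purely combinatorial fact about $3\,2\,1$-avoiding permutations, and then deduce (ii) from (i) together with a close look at how the Schensted insertion builds the first row of $\Q$. For part (i), if $r,s\in\mathcal{F}\cup\mathcal{R}$ then $\sigma(r)\ge r$ and $\sigma(s)\ge s$, so this is exactly Lemma~\ref{lem:s5l1}(i); similarly the case $r,s\in\mathcal{F}\cup\mathcal{L}$ gives $\sigma(r)\le r$, $\sigma(s)\le s$, which is Lemma~\ref{lem:s5l1}(ii). So part (i) is essentially immediate from the preceding lemma once one unwinds the definitions of $\mathcal{L},\mathcal{F},\mathcal{R}$.

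For part (ii), the idea is this: reading $\sigma(1),\sigma(2),\dots,\sigma(\ell)$ and inserting them one at a time, a new box is adjoined to the \emph{first} row of $\Q$ at step $s$ precisely when $\sigma(s)$ is larger than every entry currently in the first row — equivalently, when $\sigma(s)$ is a left-to-right maximum of the sequence $\sigma(1),\dots,\sigma(\ell)$ among the values that were not bumped out, but for a $3\,2\,1$-avoiding permutation the first row of $\Q$ at any stage consists exactly of the left-to-right maxima seen so far (there is never enough room for a third row, and the bumping is tightly controlled). So I would first record the standard fact that for a $3\,2\,1$-avoiding $\sigma$, the entry $\sigma(s)$ creates a new box in the first row of $\Q$ if and only if $\sigma(s)>\sigma(j)$ for all $j<s$, i.e. $s$ is a position of a left-to-right maximum; equivalently $s$ is in the first row of $\R$. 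It then suffices to show that every $s\in\mathcal{F}\cup\mathcal{R}$ is such a position. Suppose $s\in\mathcal{F}\cup\mathcal{R}$, so $\sigma(s)\ge s$, and suppose for contradiction that some $j<s$ has $\sigma(j)>\sigma(s)$. If $j\in\mathcal{F}\cup\mathcal{R}$ then by part (i) $\sigma(j)<\sigma(s)$, a contradiction; hence $j\in\mathcal{L}$, i.e. $\sigma(j)<j<s\le\sigma(s)<\sigma(j)$, which is absurd. Therefore no such $j$ exists, $s$ is a left-to-right maximum position, and $s$ lies in the first row of $\R$.

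The step I expect to need the most care is the bridge between ``$s$ is a left-to-right maximum position of $\sigma$'' and ``$s$ is in the first row of $\R$'': this is where one must invoke (or briefly re-derive) the structure of RS insertion on $3\,2\,1$-avoiding permutations, namely that the recording tableau has at most two rows and its first row is filled exactly at the left-to-right maxima. One clean way is to argue by induction on $s$ that after inserting $\sigma(1),\dots,\sigma(s)$ the first row of $\Q$ is the set of left-to-right maxima of $\sigma(1),\dots,\sigma(s)$: inserting a new maximum appends a box to row $1$; inserting a non-maximum $\sigma(s)$ bumps the smallest first-row entry exceeding it into row $2$, and one checks a third bump can never occur because it would force a $3\,2\,1$ pattern. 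Once this is in place, part (ii) follows as above. Everything else is bookkeeping with the definitions already set up in the excerpt.
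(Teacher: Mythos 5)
Your proof is correct and follows essentially the same route as the paper: part (i) is read off from Lemma~\ref{lem:s5l1}, and part (ii) comes down to showing $\sigma(i)<\sigma(s)$ for every $i<s$ by splitting into $i\in\mathcal{F}\cup\mathcal{R}$ (use (i)) and $i\in\mathcal{L}$ (use $\sigma(i)<i<s\le\sigma(s)$), so that $\sigma(s)$ is appended to the end of the first row. The only direction of your ``left-to-right maximum'' bridge that is actually needed — a value exceeding all previously inserted entries is adjoined to row one, placing $s$ in the first row of $\R$ — holds for arbitrary permutations, so the extra induction on the structure of the first row for $3\,2\,1$-avoiding $\sigma$ is not required.
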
 \begin{proof} Part (i) is a  consequence  of Lemma~\ref{lem:s5l1}.
For (ii),  let $s \in \mathcal{F} \cup \mathcal{R}$ and $i <s$.
If $i\in  \mathcal{F} \cup \mathcal{R}$, then we have $\sigma(i) < \sigma(s)$  by (i).
If $i \in \mathcal{L}$, then $\sigma(i) <i <s<\sigma(s)$. Hence for any $i<s$, we have $\sigma(i)<\sigma(s)$.   So in the Robinson-Schensted procedure,  we adjoin  $\sigma(s)$ to the end of the  first row of the insertion tableau, and therefore, $s$ is in the first row of recording tableau.
\end{proof}

For an  intersection of lines in the diagram  of $\sigma$, the  path depths of the faces around the crossing  must be
as in one of the following configurations:
\begin{equation}\label{eq:depthconf}
\grim{0.8}{depthconf1.pdf} \; , \qquad
\grim{0.8}{depthconf2.pdf} \; , \qquad
\grim{0.8}{depthconf3.pdf} \; .
\end{equation}

Note that the  vertices $a,b,c,d$ in these pictures represent  either a vertex on a boundary or a crossing point of two vertical lines.  If  $a$ is a vertex on the top boundary, then the second configuration is not possible,  since the shortest path from a point in $f_a^r$ to $f_\infty$ must cross an edge to get to $f_a^l$ as shown below.  Likewise, if $c$ is a bottom
vertex, then the third configuration is impossible.
$$
\grim{0.8}{pathtof0.pdf} \; ,  \qquad \qquad  \grim{0.8}{pathtof1.pdf}\; .
$$
\begin{lemma} \label{lem:ellcond1}  \begin{itemize}
\item[{\rm (i)}]  If $s \in \mathcal{F}  \cup \mathcal{R}$, then  $\delta_\sigma(t_s)=1$.
\item[{\rm (ii)}] If  $s \in \mathcal{L}$, then   $\delta_\sigma(b_{\sigma(s)})=1$.
\item[{\rm (iii)}]  If  $s \in \mathcal{F}$, then $\delta_\sigma(b_{\sigma(s)})= \delta_\sigma(b_s) = 1$.
\end{itemize}
\end{lemma}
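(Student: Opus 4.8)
The plan is to prove (i) directly from the local depth configurations in \eqref{eq:depthconf}, to deduce (ii) from (i) by passing to the inverse permutation, and to obtain (iii) from (i) together with Lemma~\ref{lem:s5l1}(iii). Throughout I will write $L_s$ for the segment of the diagram of $\sigma$ joining $t_s$ to $b_{\sigma(s)}$, and I will use the fact, established with \eqref{eq:depthconf}, that $\delta_\sigma(v)\in\{0,1\}$ for every boundary vertex $v$.

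For (i), fix $s\in\mathcal F\cup\mathcal R$, so that $\sigma(s)\ge s$ and $L_s$ is vertical or slopes downward to the right. I would reduce the statement to two claims. (A) Every path (avoiding the top and bottom boundaries) from a point of $f_{t_s}^r$ to $f_\infty$ crosses $L_s$: the arc formed by the top boundary to the left of $t_s$, followed by $L_s$, followed by the bottom boundary to the left of $b_{\sigma(s)}$, separates the interior of $f_{t_s}^r$ from the interior of $f_\infty$, and its only non-boundary portion is $L_s$. (B) Among all faces meeting $L_s$ along its left side, the topmost one, namely $f_{t_s}^l$, has the smallest path depth. Granting (A) and (B): for any path $\gamma$ from $f_{t_s}^r$ to $f_\infty$, by (A) it uses at least one edge before first crossing $L_s$ and it then enters some left face $f'$ of $L_s$, so by (B) $|\gamma|\ge 1+d_\sigma(f')\ge 1+d_\sigma(f_{t_s}^l)$; hence $d_\sigma(f_{t_s}^r)\ge d_\sigma(f_{t_s}^l)+1$. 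Since crossing the common edge of $f_{t_s}^r$ and $f_{t_s}^l$ lying on $L_s$ yields the opposite inequality $d_\sigma(f_{t_s}^r)\le d_\sigma(f_{t_s}^l)+1$, this gives $\delta_\sigma(t_s)=1$.

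The heart of the proof is (B). By Lemma~\ref{lem:s5l1}(ii) and Lemma~\ref{lem:s5rs}(i), a line $L_i$ of the diagram crosses $L_s$ only when $i>s$ and $\sigma(i)<\sigma(s)$, and any such $L_i$ passes from the right side of $L_s$ to the left side of $L_s$ as it descends. Thus, walking down the left side of $L_s$ from just below $t_s$, one meets the crossings of $L_s$ in order, and at each crossing the four adjacent faces form one of the three configurations of \eqref{eq:depthconf}. I would then check that in every one of those configurations the left face of $L_s$ immediately below a crossing has depth no smaller than the left face immediately above it, and I would note that at the topmost crossing of $L_s$ the $L_s$-segment above it runs to the top boundary vertex $t_s$, so the second configuration of \eqref{eq:depthconf} is excluded there---this is exactly the case in which the depth could otherwise have decreased. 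It follows that the depths of the successive left faces of $L_s$, read from top to bottom, are nondecreasing, which proves (B). This monotonicity check is the step I expect to be the main obstacle, as it is where the precise form of the three depth configurations and the top-boundary exclusion are genuinely needed.

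For (ii), set $j=\sigma(s)$ with $s\in\mathcal L$, so $\sigma^{-1}(j)=s>j$. Reflecting the diagram of $\sigma$ across a horizontal line turns it into the diagram of $\sigma^{-1}$, fixes $f_\infty$ as well as the notions of ``left of'' and ``right of'' a boundary vertex, and therefore preserves path depths; hence $\delta_\sigma(b_j)=\delta_{\sigma^{-1}}(t_j)$. As $\sigma^{-1}$ is again $3\,2\,1$-avoiding and $j$ lies in $\mathcal F\cup\mathcal R$ for $\sigma^{-1}$, part (i) applied to $\sigma^{-1}$ gives $\delta_{\sigma^{-1}}(t_j)=1$, which is (ii). Finally, for (iii), if $s\in\mathcal F$ then $\sigma(s)=s$ and, by Lemma~\ref{lem:s5l1}(iii), no line of the diagram crosses the vertical segment $L_s$; hence $f_{t_s}^l=f_{b_s}^l$ and $f_{t_s}^r=f_{b_s}^r$, so $\delta_\sigma(b_{\sigma(s)})=\delta_\sigma(b_s)=\delta_\sigma(t_s)$, which equals $1$ by (i) since $\mathcal F\subseteq\mathcal F\cup\mathcal R$.
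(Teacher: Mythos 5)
Your argument is correct, but it is organized rather differently from the paper's. The paper's proof of (i) is purely local: for $s\in\mathcal F\cup\mathcal R$ the line $L_s$ is the ``down\,-to\,-the\,-right'' line at each of its crossings, so $t_s$ occupies the position labelled $a$ at the topmost crossing; the second configuration of \eqref{eq:depthconf} is excluded there because $a$ is a top boundary vertex, and the first and third configurations both read off $\delta_\sigma(a)=1$ directly. You instead prove the global separation statement (A) together with the monotonicity statement (B) that the depths of the faces along the left side of $L_s$ are nondecreasing from top to bottom, and then sandwich $d_\sigma(f_{t_s}^r)$ between $d_\sigma(f_{t_s}^l)+1$ from both sides. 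In effect you are re-deriving, in full detail, the very fact the paper uses to exclude the second configuration at a top vertex; your version is longer but more self-contained, and your reductions of (ii) to (i) via $\sigma^{-1}$ and of (iii) via Lemma~\ref{lem:s5l1}(iii) (no line crosses a vertical $L_s$) are clean and match the paper's intent.

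One part of your write-up is internally inconsistent and should be repaired, although it does not invalidate the proof. In establishing (B) you first assert, correctly, that in \emph{every one} of the three configurations of \eqref{eq:depthconf} the left face of $L_s$ below a crossing has depth at least that of the left face above it (for $s\in\mathcal F\cup\mathcal R$ these are the $W$ and $S$ faces of the crossing, and the depth differences are $+1$, $+1$, $0$ in the three configurations). You then add that the second configuration must be excluded at the topmost crossing because ``this is exactly the case in which the depth could otherwise have decreased.'' That aside is false: the second configuration does not decrease the left-side depth, and the exclusion plays no role in (B). Moreover, had the monotonicity genuinely failed in the second configuration, your argument would be in trouble, since that configuration is only excluded at crossings whose upper-left end is a \emph{boundary} vertex, i.e.\ at the topmost crossing of $L_s$, not at the lower ones where it can and does occur. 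Delete the aside (or move the boundary-vertex exclusion to where it actually belongs, namely the paper's one-line local proof) and your argument stands.
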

\begin{proof} Only the first and third configurations in \eqref{eq:depthconf} can occur when $a = s$ on the top boundary of the diagram of $\sigma$, so (i) is apparent.   Similarly, if $c = \sigma(s)$ for some $s \in \mathcal L$,
then only the first two configurations are possible and we have (ii).  Part (iii) is clear.
\end{proof}

We have seen in Lemma \ref{lem:s5rs} that each $s \in \mathcal F \cup \mathcal R$ lies in the first row of the recording tableau $\R$.  To determine conditions for  $s \in \mathcal{L}$ to lie in the first row of  $\R$ will require some additional concepts.

Assume  $s\in \mathcal{L}$ and start moving along the edge from vertex $t_s$ on top. Whenever we reach a crossing, we make one of the following allowable turns:
\begin{equation}\label{eq:alloedturns}
\grim{0.8}{turn1.pdf} \;  \qquad  \qquad
\grim{0.8}{turn2.pdf} \;.
\end{equation}
We keep traveling  until we arrive at a vertex
on the top or bottom boundary and then stop.
The following  illustrates what happens in  Example~\ref{exam:perm}.
\begin{example}
$$
\grim{0.75}{turnexample.pdf} \;.
$$
\end{example}
Let $\varepsilon(t_s)$ be the number of turns we made to arrive at a vertex  in top or bottom starting from $t_s$.
The following lemma is  clear.
\begin{lemma}
For $s \in \mathcal{L}$,  starting from vertex $t_s$, we arrive at a vertex $t_r$ in the  top if and only if $\varepsilon(t_s)$ is odd, and in this case,  $r\in \mathcal{R}$.  We arrive at a vertex $b_r$  in the bottom if and only if $\varepsilon(t_s)$ is even, and in this case, $\sigma^{-1}(r) \in \mathcal{L}$.
\end{lemma}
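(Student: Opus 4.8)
The plan is to analyze the walk by tracking two elementary invariants of the directed segments it traverses. Put the top boundary at height $y=1$ and the bottom at $y=0$, and realize the diagram of $\sigma$ by drawing, for each $i$, the straight strand $L_i$ from $t_i=(i,1)$ to $b_{\sigma(i)}=(\sigma(i),0)$; thus $L_i$ has positive slope when $i\in\mathcal{L}$, negative slope when $i\in\mathcal{R}$, and is vertical when $i\in\mathcal{F}$. First I would observe that the walk is deterministic: at a crossing the incoming segment dictates which of the two moves pictured in \eqref{eq:alloedturns} is being performed, so $\varepsilon(t_s)$ is well defined. Then, by inspecting those two pictures, one checks the two facts on which everything rests: (a) every turn reverses the vertical sense of travel (from $y$-increasing to $y$-decreasing and vice versa), and (b) every turn keeps the horizontal coordinate $x$ nonincreasing along the walk, strictly decreasing on every non-vertical segment. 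The initial segment leaves $t_s$ along $L_s$, which, since $s\in\mathcal{L}$, has positive slope, so it is traversed downward and to the left, consistently with (a) and (b). Since the walk's $x$-coordinate is nonincreasing, the walk visits no crossing twice; being confined to the finitely many strands, whose endpoints all lie on the two boundary lines, it therefore cannot cycle and must terminate at a boundary vertex.

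For the parity dichotomy, fact (a) shows that the segments of the walk alternate in vertical sense: the segment traversed immediately after the $j$-th turn goes downward if $j$ is even and upward if $j$ is odd. A segment traversed downward can reach the boundary only at a bottom vertex, and one traversed upward only at a top vertex. Applying this to the final segment, which is the one traversed after $\varepsilon(t_s)$ turns, shows that the walk ends at a top vertex exactly when $\varepsilon(t_s)$ is odd, and at a bottom vertex exactly when $\varepsilon(t_s)$ is even.

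It remains to identify the terminal vertex, and for this I would use (b). If the walk ends at a top vertex $t_r$, its final segment is traversed upward and, by (b), to the left, so along it $x$ decreases as $y$ increases; that segment lies on $L_r$, which joins $(r,1)$ to $(\sigma(r),0)$, and ``$x$ decreasing while $y$ increases'' forces $\sigma(r)>r$, i.e. $r\in\mathcal{R}$. Symmetrically, if the walk ends at a bottom vertex $b_r$, its final segment is traversed downward and to the left, lies on the strand $L_{\sigma^{-1}(r)}$ joining $(\sigma^{-1}(r),1)$ to $(r,0)$, and ``$x$ decreasing while $y$ decreases'' forces $\sigma^{-1}(r)>r$, i.e. $\sigma^{-1}(r)\in\mathcal{L}$; the degenerate case $\varepsilon(t_s)=0$, where the walk is the whole strand $L_s$, ends at $b_{\sigma(s)}$ with $\sigma^{-1}(\sigma(s))=s\in\mathcal{L}$. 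The only nonroutine ingredient is the finite verification of (a) and (b) from the turn pictures in \eqref{eq:alloedturns}, together with the observation that those pictures supply an allowable move for every incoming direction, so that the walk never stalls at a crossing; this is presumably why the lemma is asserted to be clear.
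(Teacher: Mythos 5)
Your argument is correct, and since the paper offers no proof at all here (the lemma is simply declared clear), your write-up is a genuine elaboration rather than a retracing: the two invariants you isolate --- each turn reverses the vertical sense of travel, and the horizontal coordinate decreases along the walk --- are exactly what make the statement evident, and they deliver termination, the parity dichotomy, and the sign of the slope of the terminal strand in the right order. One step, however, is not the purely local ``finite verification from the turn pictures'' you describe at the end: the claim that the two pictured turns supply a move at every crossing the walk meets. At a crossing of two strands of the same sense (say two positive-slope strands), no turn is simultaneously leftward and vertically reversing, so your walk would stall there and facts (a) and (b) could not both hold. What rules this out is precisely $3\,2\,1$-avoidance, via Lemma~\ref{lem:s5l1}(i)--(ii): no two strands indexed by $\mathcal{L}\cup\mathcal{F}$ form an inversion, and likewise for $\mathcal{R}\cup\mathcal{F}$, so every crossing in the diagram of $\sigma$ is between an $\mathcal{R}$-strand entering from the upper left and an $\mathcal{L}$-strand entering from the upper right --- exactly the configuration depicted in \eqref{eq:alloedturns}. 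The same lemma shows that vertical $\mathcal{F}$-strands cross nothing, so the walk never lands on one; hence $x$ is \emph{strictly} decreasing throughout (which is what your termination argument actually needs, ``nonincreasing'' alone not sufficing to forbid revisits) and the final segment is genuinely non-vertical, so your slope computation identifying $r\in\mathcal{R}$ or $\sigma^{-1}(r)\in\mathcal{L}$ applies. With that one citation made explicit, the proof is complete.
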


As we traverse an edge,  there  are faces $f_1$ and $f_2$  on the two sides of that edge.
The absolute value of the difference  $|d_\sigma(f_1)-d_\sigma(f_2)|$ is either $0$ or $1$  by (\ref{eq:depthconf}).
When the edge is connected to a vertex $v$ on either  the top or bottom boundary, the path depth difference equals $\delta_\sigma(v)$.
The differences switch back and forth between $0$ and $1$ as we make a turn (see \eqref{eq:depthconf} and \eqref{eq:alloedturns}).

Therefore, starting from vertex $t_s$, $s\in \mathcal{L}$,  if we make an odd number of turns and  arrive at vertex $t_r$, $r\in \mathcal{R}$, then $\delta_\sigma(t_s)$ and $\delta_\sigma(t_r)$ are different. If instead we make an even number of turns and arrive at  vertex $b_r$, then $\sigma^{-1}(r)\in \mathcal{L}$, and $\delta_\sigma(t_s) = \delta_\sigma(b_r)$.

Now from Lemma~\ref{lem:ellcond1}, we have the following corollary.
\begin{cor}\label{C:delsig}
For $s \in \mathcal{L}$, when we start from the vertex $t_s$, we end at a vertex on the  top if and only if  $\delta_\sigma(t_s)=0$.
\end{cor}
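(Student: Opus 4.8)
The plan is to prove the biconditional via the dichotomy that the walk starting from $t_s$ must terminate at a boundary vertex, which is either on the top or on the bottom, and to show that these two outcomes correspond exactly to $\delta_\sigma(t_s)=0$ and $\delta_\sigma(t_s)=1$ respectively. The two cases are dispatched by combining the parity-of-turns bookkeeping recorded in the paragraph immediately before the statement with Lemma~\ref{lem:ellcond1}.

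First I would suppose that the walk starting at $t_s$ ends at a top vertex $t_r$. By the (unlabeled) lemma stated just above, asserted to be clear, this occurs precisely when the number $\varepsilon(t_s)$ of turns is odd, and in that case $r\in\mathcal{R}$. Since each allowable turn toggles between $0$ and $1$ the path-depth difference across the edge currently being traversed---this is exactly what the configurations in \eqref{eq:depthconf} and \eqref{eq:alloedturns} record---an odd number of turns forces $\delta_\sigma(t_s)\neq\delta_\sigma(t_r)$. Applying Lemma~\ref{lem:ellcond1}(i) to $r\in\mathcal{R}$ gives $\delta_\sigma(t_r)=1$, hence $\delta_\sigma(t_s)=0$.

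Next I would treat the case that the walk instead ends at a bottom vertex $b_r$. The same lemma says this is exactly the case that $\varepsilon(t_s)$ is even, in which case $\sigma^{-1}(r)\in\mathcal{L}$, and an even number of toggles gives $\delta_\sigma(t_s)=\delta_\sigma(b_r)$. Writing $r=\sigma(s')$ with $s'=\sigma^{-1}(r)\in\mathcal{L}$ and applying Lemma~\ref{lem:ellcond1}(ii), we obtain $\delta_\sigma(b_r)=\delta_\sigma(b_{\sigma(s')})=1$, so $\delta_\sigma(t_s)=1$. Since the walk must terminate at one of the two boundaries, we have shown ``ends on top $\Rightarrow\delta_\sigma(t_s)=0$'' and ``ends on bottom $\Rightarrow\delta_\sigma(t_s)=1$'', and the stated biconditional follows (the reverse direction being the contrapositive of the second implication, using again that the walk ends somewhere).

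There is no genuine obstacle here: the only delicate point, and the reason this is a corollary rather than an immediate observation, is the parity bookkeeping---that each turn reverses whether the path-depth difference across the traversed edge equals $0$ or $1$---together with the fact that the walk does terminate at a boundary vertex, and both of these have already been established in the discussion preceding the statement. So the proof amounts to assembling Lemma~\ref{lem:ellcond1} with that bookkeeping and the top/bottom termination dichotomy.
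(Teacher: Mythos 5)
Your proof is correct and follows exactly the route the paper intends: the top/bottom termination dichotomy, the parity-of-turns toggling of the path-depth difference from the preceding paragraph, and Lemma~\ref{lem:ellcond1}(i)--(ii) applied to the terminal vertex. The paper leaves this assembly implicit (it simply says the corollary follows from Lemma~\ref{lem:ellcond1} after the discussion of turns), and your write-up fills in precisely those steps.
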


Let $s\in \mathcal{L}$. Assume $s$ is in the second row of the recording tableau $\R$.   Hence,  $\sigma(r)$   is bumped out of the first row of $\Q$  by $\sigma(s)  <\sigma(r)$ for some $r<s$ while applying the Robinson-Schensted process. And by Lemma~\ref{lem:s5l1},  $r\in \mathcal{R}$.
 Note any $\sigma(r)$ can be bumped out  at most once.
Thus,  each $s\in \mathcal{L}$ in the second row of $\R$ is paired with a unique $r\in \mathcal{R}$ with $r<s$ such that  $\sigma(r) >\sigma(s)$.  We call such a pair $(r,s)$  an  \emph{RS pair}.

\begin{example}
 For
$$
\sigma=
\begin{pmatrix}
1&2&3&4&5&6&7&8&9&10&11& 12 \\
4&1&6&2&3&7&8&9&12&5&10&11
\end{pmatrix},
$$
the crossings in the diagram of $\sigma$ made by the  RS pairs are indicated by squares in the following example.
$$
\grim{0.75}{perm321pasqure.pdf} \;.
$$
Recall $\sigma$  corresponds  to
$(\R,\Q) = \left(\; \grim{0.8}{tabR.pdf}\; \; , \;\; \grim{0.8}{tabQ.pdf} \; \right)$.
\end{example}

Assume $s\in \mathcal{L}$, and there is an RS pair $(r,s)$. We will show that a  Turn II is always followed by a Turn I during the trip starting from $t_s$, and hence $\delta_\sigma(t_s) = 0$ by Corollary \ref{C:delsig}. The following illustrates the possibilities that occur when a Turn II occurs.
$$
\grim{1}{travela1.pdf}\; , \qquad
\grim{1}{travelb1.pdf} \;.
$$
In either case, since $r<s'<s$ and $s'$ is not paired with $r$, there must be an $r'$  such that $r'<r$,\,  $r'\in \mathcal{R}$, \, $(r',s')$ is an RS pair, and a Turn I follows:
$$
\grim{1}{travela2.pdf}\; , \qquad
\grim{1}{travelb2.pdf}\;.
$$
We repeat this procedure until we arrive at a vertex on the top boundary in this case.

On the other hand, assume that $s\in \mathcal{L}$ and $s$ is in the first row of the recording tableau $\R$, i.e., there is no RS pair of $s$.
We show that if a Turn I happens,  then always a Turn II follows during the trip starting from $t_s$, and hence $\delta_\sigma(t_s) = 1$ by Corollary \ref{C:delsig}.
In order for a turn to occur,  the line  traveled is crossed by a line starting from a vertex $t_r$ with $r\in \mathcal{R}$, $r<s$, and $\sigma(r) > \sigma(s)$. Since $s$ is not paired, $r$ must be paired with another $s' \in \mathcal{L}$ with  $r<s'<s$.
If there is no $s''\in \mathcal{L}$ such that $s'<s''<s$, then we make a Turn II  when  the line from $t_{s'}$ crosses the line from $t_r$.
$$
\grim{1}{travelc2.pdf}
$$
If there is  $s''\in \mathcal{L}$ such that $s'<s''<s$, then we make a Turn II at the crossing of the line from $t_{s''}$ with the line from $t_r$. If $s''$ is paired with a $r'\in \mathcal{R}$,  then $r'>r$ since $s'<s''$ is paired with $r$.
$$
\grim{1}{travelc3.pdf}
$$
Then, the line from $t_{r'}$ must meet with the line from $t_{s}$,  and we get a contradiction,  since we made a turn at the crossing of the line from $t_{r}$ and the line from $t_{s}$. Hence, $s''$ is not part of an RS pair,  and we make a turn at the crossing of the line from $t_{r}$ and the line from $t_{s''}$.
$$
\grim{1}{travelc4.pdf}
$$
We continue this procedure until we arrive at a vertex on the bottom boundary in this case.

We now have  the following result,  which summarizes the relationship we have
determined  between  the path depth differences and the positions in the recording tableau $\R$ of $\sigma$.

\begin{lemma}\label{lem:c2c2} Assume $\R$ is the recording tableau of $\sigma$ and $\mathcal{L}$
is as  in Definition \ref{def:LRFsets}.
\begin{enumerate}[{\rm (a)}]
\item  If $s \in {\mathcal R}\cup {\mathcal F}$, then  $\delta_\sigma(t_s)=1$ and $s$ is in the first row of  $\R$.
\item  If $s \in \mathcal{L}$ and   $\delta_\sigma(t_s)=1$,  then  $s$ is in the first row of  $\R$.
\item  If $s \in \mathcal{L}$ and   $\delta_\sigma(t_s)=0$, then $s$ is in the second row of  $\R$.
\end{enumerate}
\end{lemma}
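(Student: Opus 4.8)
The plan is to assemble the statement from results already in hand; no new ingredient is needed. For part (a), I would combine Lemma~\ref{lem:ellcond1}(i), which gives $\delta_\sigma(t_s)=1$ whenever $s\in\mathcal{R}\cup\mathcal{F}$, with Lemma~\ref{lem:s5rs}(ii), which places such an $s$ in the first row of the recording tableau $\R$.

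For (b) and (c), the first step is to record a dichotomy for $s\in\mathcal{L}$: by the RS insertion rule, $s$ lies in the second row of $\R$ exactly when $\sigma(s)$ bumps some $\sigma(r)$ out of the first row of the insertion tableau, i.e.\ exactly when $s$ belongs to an RS pair $(r,s)$ (with $r\in\mathcal{R}$, $r<s$, $\sigma(r)>\sigma(s)$); otherwise $s$ lies in the first row of $\R$. Since each $\sigma(r)$ is bumped at most once, this is an honest partition of $\mathcal{L}$ into ``first row of $\R$'' and ``second row of $\R$''. The second step is to match these two cases with the two values of $\delta_\sigma(t_s)$ via the traveling arguments above: if $s$ belongs to an RS pair, every Turn~II in the trip starting at $t_s$ is followed by a Turn~I, so the trip ends on the top boundary and Corollary~\ref{C:delsig} gives $\delta_\sigma(t_s)=0$; if $s$ belongs to no RS pair, every Turn~I is followed by a Turn~II, the trip ends on the bottom boundary, and Corollary~\ref{C:delsig} gives $\delta_\sigma(t_s)=1$. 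Taking contrapositives then yields (b) (if $\delta_\sigma(t_s)=1$ then $s$ is not in the second row, so it is in the first) and (c) (if $\delta_\sigma(t_s)=0$ then $s$ is not in the first row, so it is in the second).

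Since all the substantive work lies in Lemma~\ref{lem:ellcond1}, Lemma~\ref{lem:s5rs}, Corollary~\ref{C:delsig}, and the two traveling arguments, the proof of Lemma~\ref{lem:c2c2} is essentially organizational. The one point I would be careful about is that the traveling arguments genuinely cover every crossing configuration that can occur; the $3\,2\,1$-avoiding hypothesis (via Lemma~\ref{lem:s5l1}) is exactly what rules out the problematic ones, so I do not anticipate a real obstacle here.
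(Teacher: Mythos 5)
Your proposal is correct and takes essentially the same route as the paper: the paper states Lemma~\ref{lem:c2c2} as a summary of exactly the ingredients you assemble, namely Lemma~\ref{lem:ellcond1}(i) together with Lemma~\ref{lem:s5rs}(ii) for part (a), and for parts (b) and (c) the identification of ``second row of $\R$'' with membership in an RS pair, matched to the values of $\delta_\sigma(t_s)$ via the two traveling arguments and Corollary~\ref{C:delsig}.
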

Now we are ready to prove the main theorem in this section.

\begin{proof}(\textbf{Proof of Theorem \ref{thm:RSpairs}}).
From a $3\,2\,1$-avoiding permutation  $\sigma$, we
obtained a pair  of standard tableaux $\Psi(\sigma) = (\SF^+,\SF^-)$ using the path depth differences in  Definition~\ref{def:inverse} and a pair of standard tableaux  $(\R,\Q)$ by the RS correspondence.   Lemma~\ref{lem:c2c2}  shows that  $\SF^+ = \R$.  Since $\Psi(\sigma^{-1}) = (\SF^-, \SF^+)$ and  $\sigma^{-1}$ corresponds to $(\Q,\R)$,
we have   $\SF^-=\Q$.
\end{proof}

\begin{remark}\label{rem:tlbijection}
Starting a trip from $t_s$, $s\in  \mathcal{L}$, we only need Turn I and Turn II.
However, beginning  the trip from any vertex on the top (or bottom),  also requires  using the following turns.
$$
\grim{0.8}{turn3.pdf} \; , \qquad
\grim{0.8}{turn4.pdf} \; .
$$
Connecting the initial and  terminal vertices of each path traveled results in a planar diagram
(no edges cross)  with each of the $2\ell$ vertices connected to a unique vertex.    This determines a bijection between the set of $3\,2\,1$-avoiding permutations and Temperley-Lieb diagrams.  In particular,
Example~\ref{exam:perm} gives
$$
\grim{0.75}{turnexamplecmp.pdf} \; ;
$$
hence,  the following Temperley-Lieb diagram
$$
\grim{0.75}{perm321patl.pdf}  \; .
$$

\end{remark}

\section{$\Phi$ is bijective} \label{sec:inverse}

In Section~\ref{sec:bijection}, we constructed a  map $\Phi$   from
the set of  semistandard tableaux of shape $(3^n)$ and type  $\{1^2,\ldots, k^2, k+1, \ldots, 3n-k\}$ to the set of nonelliptic $\Atwo$-webs with a boundary of $k$ pluses  on the top and a boundary of  $3n-2k$  minuses  on the bottom. In this section, we  show that $\Phi$ is a bijection by  using  Khovanov-Kuperberg's  definition (Definition~\ref{def:circle}) of path depth.

We connect the rightmost vertex on the top boundary  of an $\Atwo$-web
to the rightmost vertex on the bottom boundary to form one boundary line, and consider the unbounded region of  the web that opens to left  as  $f_\infty$.  Then, we assign a path depth  as in  Definition~\ref{def:circle} from each internal face of the web to $f_\infty$.
We will use this web path depth to construct two standard tableaux from the web,
providing the inverse of $\Phi$.  This is analogous to what we did in Section~\ref{sec:combinatorics},
where we used  the path depth of a $3\,2\,1$-avoiding permutation  to construct two standard tableaux
that were exactly the same tableaux produced from the  inverse of the RS correspondence by Theorem \ref{thm:RSpairs}.

Recall from \eqref{eq:ell}  that $\delta_\sigma(v)$ is the path depth difference for
a boundary vertex $v$ of  a $3\,2\,1$-avoiding permutation $\sigma$, and $\delta_\sigma(v) = 0$ or $1$
(see \eqref{eq:depthconf}).
In an analogous manner,  we define $\delta(v)$ for a boundary vertex of a  nonelliptic $\Atwo$-web.
The only difference is that $\delta(v)$ may have values $0,1$ and $-1$. (We will see below that $\delta(v)$  has  only these values.)
Using the values $\delta(t_i)$  and $\delta(b_i)$, we create two standard tableaux $\SF^+$ and  $\SF^-$ of at most three rows by inserting  $i$ as follows:
\begin{itemize}
\item put $i$ on the first row of $\SF^+$  (\emph{of $\SF^-$})
if $\delta(t_i)=1$ (\emph{resp. if $\delta(b_i)=1$});
\item put $i$ on the second row of $\SF^+$  (\emph{of $\SF^-$})
if $\delta(t_i)=0$ (\emph{resp. if $\delta(b_i)=0$});
\item put $i$ on the third row of $\SF^+$  (\emph{of $\SF^-$})
if $\delta(t_i)=-1$ (\emph{resp. if $\delta(b_i)=-1$}).
\end{itemize}
Note that this is an extension of Definition~\ref{def:inverse}.
From \cite{K} and \cite{PPR}, we know that the set of  semistandard
tableaux of shape $(3^n)$ and type  $\{1^2,\ldots, k^2, k+1, \ldots,  3n-k\}$
and set of nonelliptic $\Atwo$-webs with $k$ pluses  on the top
boundary and   $3n-2k$ minuses on the bottom boundary have  the same cardinalities.
Hence, to show that $\Phi$ is bijective, it is enough to prove
that $\Phi$ is injective,  which amounts to showing that we can recover $\T$ from $\Phi(\T)$.
Observe that if we can recover $\T^+$ and $\T^-$, then getting $\T$ is straightforward by applying $\varphi^{-1}$
(for $\varphi$ as given in Theorem \ref{thm:cute}),   adjusting
entries, rotating tableaux and joining them together. So, we will only show that we can  determine $\T^+$ and $\T^-$ from $\Phi(\T)$ using the path  depth differences  $\delta(v)$.

We start by noting that path depth  differences  $\delta(v)$ of boundary vertices $v$  on $\Phi(\T)$ are same as those on $\Phi_*(\T)$ given in Step 10 of Section.
The diagram $\Phi_*(\T)$ is  formed by  the diagrams of the two webs  $\W^+$  and $\W^-$ and the diagram of a $3\,2\,1$-avoiding permutation $\sigma$ on the isolated vertices.

Let $v$ be an isolated vertex on the top boundary of $\Phi_*(\T)$ and consider its path depth difference $\delta_{\sigma}(v)$   as a vertex in  the diagram of the  permutation
$\sigma$ as in Section \ref{sec:combinatorics}.  (An isolated vertex on the bottom boundary can be treated by a similar argument.)   Corollary~\ref{lemma:LR} implies that only the following two configurations can occur when an  $\EM$-configuration
is added to the diagram of $\sigma$.   The value of the  difference does not
change by the addition of the $\EM$-configuration; hence $\delta(v) = \delta_\sigma(v)$ for isolated vertices.  This implies that  $\mathsf{T}^+_\diamond$ and $\mathsf{T}^-_\diamond$ can be recovered  by Theorem~\ref{thm:RSpairs}  from the path depth
$\delta(v)$  of the isolated vertices:
$$
 \grim{0.6}{config1.pdf} \longrightarrow \grim{0.6}{config11.pdf}
$$
$$
\grim{0.6}{config2.pdf} \longrightarrow \grim{0.6}{config22.pdf}
$$
(Path  depths  are indicated as $a$, $a+1$, or $a+2$.)

Now assume that we have an $\EM$-configuration in $\W^+$ (or $\W^-$).  Adding a vertical line connecting an isolated vertex  on the top boundary with an isolated vertex on bottom boundary to the $\EM$-configuration results in no change to $\delta(v)$ for a vertex $v$ in the $\EM$-configuration,  since the shortest path from $f_v^r$ and $f_v^l$ to $f_\infty$ either will cross the vertical line if $v$ is to the right of the line or will not cross the vertical line if $v$ is to the left of the line.  Hence, the path depth difference of a vertex in $\W^+$ (resp. $\W^-$)
is the same as it is in $\Phi_*(\T)$.   Thus, by Tymoczko's result (see Proposition~\ref{depth}), the path  depths in  $\W^+$
and the circle depths in $\M^+$ are the same (resp. of $\W^-$ and of $\M^-$ are the same).   Then we can recover
  $ \T^+_\Box$ from $\M^+$  ($\T^-_\Box$ from $\M^-$)  by  $\delta(t_i)=1,0,-1$ (or $\delta(b_i)=1,0,-1$)  if and only if $i$ in the first, second or third row of $\T^+_\Box$ (or $\T^-_\Box$), respectively.

Once  $\T^+_\Box$ and $\T^+_\diamond$ are determined, then $\T^+$ can be obtained by putting $i$ in row $r$  of $\T^+$ if and only if $i$ is in row $r$  of $\T^+_\Box$ or $\T^+_\diamond$. Similarly, we can recover $\T^-$ from $\T^-_\Box$ and $\T^-_\diamond$.

In summary, we uniquely determined $\T$ from $\Phi(\T)$ using $\delta(v)$, and hence
$\Phi$ is injective (and bijective, too).

The following is our example from Section~\ref{sec:bijection}. The path depths are indicated on $\Phi_*(\T)$ and $\Phi(\T)$.

$$
\grim{0.45}{arcfwithdepth.pdf} \quad
\grim{0.45}{finalinverse.pdf}
$$
Then, using the path depths, we recover the standard tableaux
$$
\T^+=\grim{0.8}{exam4.pdf} \   \text{ and } \ \T^-=\grim{0.8}{exam8.pdf}\;.  \hspace{.8 truein}   \square
$$

\section{Connections with Westbury's flow diagrams} \label{sec:flow}
The algorithm in Section \ref{sec:bijection} produces an $\Atwo$-web from
a pair $(\T^+, \T^-)$  of standard tableaux obtained from a semistandard tableau $\T$.
Westbury  \cite{We}  has given an alternative to $\Ann$-webs connected with $\mathfrak{sl}_r$ using flow
diagrams in triangular shapes.
In this final section, we describe  how to  obtain an $\Atwo$ flow diagram  from such  a  pair  $(\T^+, \T^-)$ of standard tableaux.   The results described here for $\Atwo$ can be extended to $\Ann$ for arbitrary $r$,
and more details on this will appear elsewhere. Our aim here is to describe the relationship between  the results of
the previous sections to the flow diagram approach.

First,  we briefly review Westbury's result.
To  $a\in\{1, \ldots, r\}$, we associate two equilateral triangles whose sides are one unit in length:
\begin{equation}\label{eq:triangle1}
\grim{0.7}{tria.pdf}, \qquad \grim{0.7}{trima.pdf} \; .
\end{equation}
For compatibility with the  conventions in previous sections, our arrows are the reverse of
what is in \cite{We},  so  they go from  $-$ to $+$
rather than from $+$ to $-$.
The edge labels  $0$ and $r$ are omitted in the diagrams,  and  the triangles associated to $1,-1, r ,-r$ are
$$
\grim{0.7}{tri1.pdf}, \quad
\grim{0.7}{tri1ma.pdf}, \quad
\grim{0.7}{trin.pdf}, \quad
\grim{0.7}{trinma.pdf} \; .
$$

Now to each pair $(s,t)$, $s,t\in \{0,1, \ldots, r-1\}$, we associate a rhombus such that when $st \neq 0$,
\begin{equation}\label{eq:rhombus}
\grim{0.7}{rhombst.pdf}\quad \text{if $s\ne t$,} \qquad  \quad
\grim{0.7}{rhombss.pdf}\quad \text{if $s= t$.}
\end{equation}
If $t=0$ or $s=0$, the corresponding rhombi are
$$
\grim{0.7}{rhombs0.pdf}\;  \qquad \text{or} \qquad \grim{0.7}{rhombt0.pdf} \;.
$$

Reversing the orientation of an edge changes the sign of the label:
$$
\grim{0.7}{rhombst.pdf}\ = \ \grim{0.7}{rhombts.pdf}.
$$
Also for $\Ann$,  an edge labeled with $-q$, \ $(1\le q \le r-1)$ \   is identified with an edge labeled with  $r-q$.
For example, in the $\Atwo$ case we have
$$
\grim{0.7}{rhomb12.pdf}=\grim{0.7}{rhomb11.pdf}.
$$

We associate to a word $w$ of  $n$ symbols from $\{\pm 1,\ldots, \pm r\}$  an equilateral triangle of length $n$. Along the top boundary of this triangle, we place triangles of unit length as in \eqref{eq:triangle1},  so that the numbers on the top of these triangles, when read from left to right,  give the word $w$.   Then, we fill in the rest of the triangle with rhombi as described above,  so that the labels on the edges  match  on the boundaries of the rhombi. The triangular diagram obtained in this way is the  \emph{flow diagram} determined by $w$ (see \cite{We} for more details).
The following is an example of $\Atwo$ flow diagram associated to the word  $w = \bar 1\,2\,2\,\bar 3\,1$. (Note we use
$\bar a$ instead of $-a$ when writing a word to simplify the notation.)
$$
\grim{0.7}{flowexam.pdf} .
$$

We say a word $w$ (or its associated flow diagram) is \emph{admissible} if the associated flow diagram of $w$ has no contact with the boundaries of the large triangle  other than with the top boundary of triangle. (The diagram above is not admissible.)

Let  $\VV_+ = \VV$ be the natural $r$-dimensional $\mathfrak{sl}_r$-module and $\VV_- = \VV^*$  be  the dual module.   A tensor product of  copies of $\VV_+$ and $\VV_-$  corresponds to a string
$\smf = \smf_1\cdots \smf_n$   of symbols $+$ and $-$ and is denoted  $\VV_{\smf} = \VV_{\smf_1}\ot \cdots \ot\VV_{\smf_n}$.
Each  word $w=w_1\cdots w_n$ in $\{\pm 1,\ldots, \pm r\}$ of length $n$ determines a sign string $\smf(w)=\smf_1\cdots \smf_n$  given by $\smf_i=+$ if $w_i>0$ and $\smf_i=-$ if $w_i<0$.
For a given sign string $\smf$ of length $n$, we let $\mathcal{B}(\smf)$ be the set of admissible words $w$ in $\{\pm 1,\ldots, \pm r\}$ of length $n$ whose sign string $\smf(w) =\smf$.  Then the following analog of Theorem \ref{T:inv} holds in this setting:
\begin{thm}[{\cite[Cor.~4.6]{We}}]
Let $\smf$ be a sign string and
$\mathcal{B}(\smf)$ be the set of admissible words in $\{\pm 1,\ldots, \pm r\}$ whose sign string is  $\smf$.
Then, there is a bijection between $\mathcal{B}(\smf)$ and a basis for the space $\mathsf{Inv}(\VV_\smf)$ of  $\fsl_r$-invariants
 in $\VV_\smf$.
\end{thm}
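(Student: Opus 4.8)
The plan is to identify both sides with the set of lattice paths that stay in the dominant Weyl chamber of $\fsl_r$ and are prescribed by the sign string $\smf$, and then to compare. On the representation-theoretic side, I would start from the branching rules $\VV(\mu)\ot\VV_+\cong\bigoplus_{\mu'}\VV(\mu')$, where $\mu'$ ranges over the dominant weights obtained from the dominant weight $\mu$ by adding a single box to the associated Young diagram (which has at most $r-1$ rows), together with the dual statement $\VV(\mu)\ot\VV_-\cong\bigoplus_{\mu''}\VV(\mu'')$, where $\mu''$ ranges over those obtained by deleting a box. Iterating these and extracting the multiplicity of the trivial module shows that $\dimm\,\mathsf{Inv}(\VV_\smf)$ equals the number of sequences $\emptyset=\mu^{(0)},\mu^{(1)},\dots,\mu^{(n)}=\emptyset$ of partitions with at most $r-1$ rows, where $\mu^{(i)}$ is obtained from $\mu^{(i-1)}$ by adding a box when $\smf_i=+$ and by removing a box when $\smf_i=-$; I will call such a sequence a \emph{dominant $\smf$-path}, and a fixed choice (e.g.\ the iterated Gelfand--Tsetlin/crystal basis coming from the branching) gives a basis of $\mathsf{Inv}(\VV_\smf)$ indexed by these paths.

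Next I would build the combinatorial dictionary between admissible words and dominant $\smf$-paths. Reading the (oriented, signed) edge labels along the $i$-th diagonal of the length-$n$ triangle produces a weakly decreasing $r$-tuple; subtracting its minimum and discarding that minimal coordinate yields a partition $\mu^{(i)}$ with at most $r-1$ parts. The rule for gluing on a new unit triangle labeled $\pm a$ from \eqref{eq:triangle1}, combined with the rhombus-matching rules \eqref{eq:rhombus} and the identification $-q\equiv r-q$ (which is precisely the passage from $\mathfrak{gl}_r$- to $\fsl_r$-weights), forces $\mu^{(i)}$ to be obtained from $\mu^{(i-1)}$ by exactly the box-addition move when $\smf_i=+$ and the box-deletion move when $\smf_i=-$. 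Moreover the word $w$ together with the (unique, once $w$ is fixed) tiling is determined by and determines the sequence $(\mu^{(i)})_{i=0}^n$. Admissibility --- no contact with the two slanted sides of the big triangle --- translates exactly into the two conditions that each $\mu^{(i)}$ is a genuine partition with at most $r-1$ rows (dominance) and that $\mu^{(n)}=\emptyset$ (the path returns to the origin, which is forced because only the empty diagram can close off the bottom vertex). Hence admissible words $w$ with $\smf(w)=\smf$ are in explicit bijection with dominant $\smf$-paths.

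Combining the two steps gives $|\mathcal{B}(\smf)|=\dimm\,\mathsf{Inv}(\VV_\smf)$, and composing the bijection of the second step with the path-indexed basis from the first step produces the asserted bijection between $\mathcal{B}(\smf)$ and a basis of $\mathsf{Inv}(\VV_\smf)$. (For the special case $r=3$ one could instead reduce to Theorem~\ref{T:inv} by exhibiting a bijection between $\Atwo$ flow diagrams and nonelliptic $\Atwo$-webs; for general $r$ the analogous reduction would go through the $\Ann$ spider, which lies outside the present excerpt, so the branching-path argument above is the route I would actually write down.)

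The main obstacle is the second step: pinning down the correspondence between rhombus tilings and branching paths so that the \emph{local} tiling constraints reproduce exactly the global box-adding/removing rule --- in particular handling the sign identification $-q\equiv r-q$ consistently along every diagonal --- and then verifying rigorously that ``no contact with the two slanted sides'' is equivalent to ``the path stays dominant and ends at $\emptyset$.'' Once that combinatorial equivalence is established, the remaining input is just the standard $\fsl_r$ branching rule, which is routine.
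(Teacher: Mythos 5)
This statement is quoted from Westbury (\cite[Cor.~4.6]{We}) and the paper offers no proof of it, so there is no internal argument to compare yours against; I can only assess your proposal on its own terms. Your first step is sound and standard: iterating the Pieri-type branching rules for $\VV$ and $\VV^*$ does show that $\dimm \, \mathsf{Inv}(\VV_\smf)$ equals the number of dominant lattice paths $\emptyset=\mu^{(0)},\dots,\mu^{(n)}=\emptyset$ of the prescribed add/delete type, and such paths do index a basis. One caution even here: for $\fsl_r$ the correct move at a $+$ step is ``add a box to one of $r$ possible rows and then delete any full column of height $r$,'' not merely ``add a box to a diagram with at most $r-1$ rows.'' This is not a pedantic point --- it is exactly what accounts for the letters of $w$ ranging over $\{\pm 1,\dots,\pm r\}$ rather than $\{\pm 1,\dots,\pm(r-1)\}$, and your dictionary cannot close up without it.

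The genuine gap is in your second step, which is where the entire content of the theorem lives and which you yourself flag as ``the main obstacle.'' The claimed equivalences --- that the local rhombus-matching rules reproduce the global box-move recursion, that the tiling is determined by and determines the path, and that admissibility is exactly dominance plus return to $\emptyset$ --- are asserted rather than proved, and the specific mechanism you propose does not obviously work: the $i$-th diagonal of an equilateral triangle of side $n$ carries a number of edges governed by $i$ and $n$, not by $r$, so ``reading the $i$-th diagonal'' does not produce a weakly decreasing $r$-tuple from which $\mu^{(i)}$ can be extracted in the way you describe. The state $\mu^{(i)}$ has to be encoded differently (for instance as the multiset of edge labels, with orientations, crossing a vertical section after position $i$, interpreted as a $\mathfrak{gl}_r$-weight modulo the identification $-q\equiv r-q$), and verifying that the rhombus rules propagate this state correctly, and that boundary contact on the slanted sides corresponds precisely to a failure of dominance or of closure at $\emptyset$, is the whole proof. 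As written, your argument establishes the representation-theoretic half and a plausible strategy for the combinatorial half, but not the theorem.
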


Any interior face of a flow diagram is bounded by at least six edges,  since the following rhombus configuration is not allowable:
$$
\grim{0.7}{rhombssimpo.pdf}.
$$
Therefore, the nonelliptic $\Atwo$-webs discussed earlier in this paper are indeed $\Atwo$ flow diagrams.
Thus, in the  $\Atwo$ case, what we want is a bijection (in fact an injection) mapping the pair $(\T^+, \T^-)$ in Section~\ref{sec:bijection} to an admissible flow diagram.  To achieve this,  we apply the following algorithm,  which we
illustrate with the  running example from Section \ref{sec:bijection},  where
$\T^+=\grim{0.8}{exam4.pdf}$ has shape $\lambda = (4,2,1)$  and  $\T^-=\grim{0.8}{exam8.pdf}$ has shape $\mu=(5,3,2)$.
\bigskip

\noindent $\bullet$  \textbf{Break $\mathsf{T}^+$ into one-column tableaux}:
\medskip

\noindent Let  $\T(a,b)$ be the entry  in position $(a,b)$  of $\mathsf{T}^+$.
\begin{enumerate}[{(I)}]
\item  For $p=1, \ldots, \lambda_3$,   let $\T(2,j_p)$ be  the largest entry in the second row  of $\T^+$ less than $\T(3,p)$, such that  $\T(2,j_p)$ is different from  $\T(2,j_1)$, $\ldots$, $\T(2,j_{p-1})$ when $p > 1$.  Associate $\T(2,j_p)$ with $\T(3,p)$.
\item  For $p=1, \ldots, \lambda_2$,   let $\T(1,i_p)$ be  the largest entry in the first  row  of $\T^+$ less than $\T(2,p)$ such that  $\T(1,i_p)$ is different from  $\T(1,i_1)$, $\ldots$,  $\T(1,i_{p-1})$ when $p > 1$.  We associate $\T(1,i_p)$ with $\T(2,p)$.
\end{enumerate}
Now assemble associated entries of $\T^+$ to form one-column tableaux.  There will be  $\lambda_3$ tableaux of shape $(1^3)$, \ $\lambda_2-\lambda_3$ tableaux of shape $(1^2)$,  and  $\lambda_1-\lambda_2$ tableaux of shape $(1)$. For $\T^+$ as above,
we obtain
$\T^+_{3,1}=\grim{0.8}{tplus1.pdf}$,
$\T^+_{2,1}=\grim{0.8}{tplus2.pdf}$,
$\T^+_{1,1}=\grim{0.8}{tplus3.pdf}$,
$\T^+_{1,2}=\grim{0.8}{tplus4.pdf}$.
The index $\ell$ in  $\T^+_{\ell, i}$ indicates that
the shape of the tableau is $(1^\ell)$.  To determine the second index,  we compare
the bottom entries in the tableaux of shape $(1^\ell)$  and number the tableaux so the
one with the smallest last row entry is $\T_{\ell, 1}^+$, the next one is $\T_{\ell,2}^+$, and so forth
until we reach the one with the largest last row entry.
\bigskip

\noindent $\bullet$ \  \textbf{Break $\mathsf{T}^-$ into one-column tableaux by the same procedure}:
\medskip

\noindent Thus, if  $\T^-=\grim{0.8}{exam8.pdf}$,  then
$\T^-_{3,1}=\grim{0.8}{tmin1.pdf}$,
$\T^-_{3,2}=\grim{0.8}{tmin2.pdf}$,
$\T^-_{2,1}=\grim{0.8}{tmin3.pdf}$,
$\T^-_{1,1}=\grim{0.8}{tmin4.pdf}$,
$\T^-_{1,2}=\grim{0.8}{tmin5.pdf}$.
\bigskip

\noindent $\bullet$ \textbf{Associate words in $\{\pm 1, \pm 2, \pm 3\}$ to the one-column tableaux}:
\medskip

Recall that $\T^+$ has $k$ entries and $\T^-$ has $3n-2k$ entries,  where $k = 7$ and $n=8$ in this particular
example.  From $\T^-_{3,1}=\grim{0.8}{tmin1.pdf}$, we get the  word $w=w_1\cdots w_{3n-k}$ of length $3n-k$ (which is 17 here)  where
$$
w_i=\begin{cases}
\bar 1 &\text{if $i$ is in the first row of $ \T^-$,} \\
\bar 2 &\text{if $i$ is in the second row of $ \T^-$,} \\
\bar 3 &\text{if $i$ is in the third row of $ \T^-$,} \\
\bullet &\text{otherwise, }
\end{cases}
$$
$$
 w=\bullet \bar{1} \bullet \bullet \bullet \bullet \bar{2}\bar{3}\bullet \bullet \,\big |\, \bullet \bullet \bullet \bullet \bullet \bullet \bullet  \; \null ,
$$
where  $\bar 1$ is in position 2, $\bar 2$ in position 7, and $\bar 3$ in position 8.
A wall  $\big |$ separates the $w_i$
from $\T^-$ on the left from  the $w_i$ on the right from $\T^+$.

The word gives the labels on the top boundary  of the triangle,
$$
\grim{0.6}{runexam1.pdf} .
$$
\bigskip

\noindent $\bullet$ \  \textbf{Draw the flow diagram according to the boundary specifications:}

$$
\grim{0.6}{runexam2.pdf} .
$$

\bigskip
\noindent $\bullet$ \  \textbf{Repeat the same procedure for the other one-column tableaux of length 3 coming from $\T^-$ and $\T^+$.}
\medskip
\begin{eqnarray*} && \T^-_{3,2}=\grim{0.8}{tmin2.pdf}:  \  \
\bullet \bullet \bullet  \bullet \,\bar{1}\bar{2} \bullet  \bullet  \bullet  \bar{3}\, \big | \bullet \bullet \bullet \bullet \bullet \bullet \bullet \; \null  \\
&& \T^+_{3,1}=\grim{0.8}{tplus1.pdf}: \ \
\bullet\bullet\bullet\bullet\bullet\bullet\bullet\bullet\bullet\bullet
\big | \, 3 \bullet \bullet  \, 2 \bullet \bullet \,1 \; ,
\end{eqnarray*}
where the word  $w=w_1\cdots w_{3n-k}$ from $\T^+_{3,1}$ is constructed from setting
$$
w_{17-i+1}=\begin{cases}
{1} &\text{if $i$ is in the first row of $ \T^+$,} \\
{2} &\text{if $i$ is in the second row of $ \T^+$,} \\
{3} &\text{if $i$ is in the third row of $ \T^+$,} \\
\bullet &\text{otherwise.}
\end{cases}
$$
The corresponding flow diagram is
$$
\grim{0.6}{runexam3.pdf} .
$$

Now for $\ell <3$, we pair $\T^-_{\ell,i}$  and $\T^+_{\ell,i}$ and to get an \emph{admissible}  flow diagram. The number
of such tableaux will always be the same for each $\ell=1,2$.  (Recall from Step 7 of Section \ref{sec:bijection} that $\T^+_\diamond$ and $\T^-_\diamond$ have the same shape.)
From  $\left( \T^-_{2,1}=\grim{0.8}{tmin3.pdf}\; , \;
\T^+_{2,1}=\grim{0.8}{tplus2.pdf} \right)$, we get the word
$$
w= \bullet \bullet \bar{1} \bar{2}\bullet\bullet\bullet\bullet\bullet\bullet \, \big | \, \bullet \bullet \bullet \bullet   2 1 \bullet \; ,
$$
and the flow diagram
$$
\grim{0.6}{runexam4.pdf} .
$$

Applying  a similar process to the pair  $\left( \T^-_{1,1}=\grim{0.8}{tmin4.pdf}\; , \;
\T^+_{1,1}=\grim{0.8}{tplus3.pdf} \right)$,   we get the word
$$
w= \bar{1} \bullet \bullet \bullet \bullet \bullet \bullet \bullet \bullet \bullet \,
\big | \, \bullet \bullet \bullet \bullet  \bullet 1 \bullet \bullet \bullet \bullet \null \; ,
$$
and to the pair  $\left( \T^-_{1,2}=\grim{0.8}{tmin5.pdf} \; , \;
\T^+_{1,2}=\grim{0.8}{tplus4.pdf} \right)$,   the word
$$
w= \bullet\bullet\bullet\bullet\bullet\bullet\bullet\bullet \bar{1} \bullet
\big | \bullet\bullet\bullet\bullet 1 \bullet\bullet\bullet\bullet\bullet
\null \; .
$$
The result of superimposing all the flow diagrams is the following flow diagram:
$$
\grim{0.6}{runexam5.pdf} .
$$

Note  by  our choice of index $i$ for the tableaux $\T^-_{\ell,i}$  and $\T^+_{\ell,i}$
above,  any two edges that cross have different labels.   By replacing the crossings with rhombi as in \eqref{eq:rhombus}, which is equivalent to  \emph{trivalizing} the crossings
that was used earlier, we have the final flow diagram associated to $(\T^+, \T^-)$,  which is admissible.

$$
\grim{0.6}{runexam6.pdf}
$$

Reading the labels off the top  boundary, we can recover  $(\T^+, \T^-)$ from the flow diagram.
Hence, the map sending the pair  $(\T^+, \T^-)$  to the flow diagram is an injection, and indeed a bijection.

\end{document}